\allowdisplaybreaks\newcommand{\pre}[1]{}\newcommand{\pub}[1]{#1}
\newcommand{\changes}[1]{#1}
\numberwithin{equation}{section}
\newtheoremstyle{thm}{3pt}{3pt}{\itshape}{}{\bfseries}{.}{ }{}
\newtheoremstyle{rest}{3pt}{3pt}{}{}{\bfseries}{.}{ }{}
\newtheoremstyle{comment}{3pt}{3pt}{}{}{\itshape}{.}{ }{}
\theoremstyle{thm}
\newtheorem{theorem}{Theorem}[section]
\newtheorem{lemma}[theorem]{Lemma}
\newtheorem{corollary}[theorem]{Corollary}
\newtheorem{assumption}[theorem]{Assumption}
\theoremstyle{rest}
\newtheorem{algorithm}[theorem]{Algorithm}
\newtheorem{experiment}[theorem]{Experiment}
\theoremstyle{comment}
\newtheorem*{comments}{Remarks}
\newtheorem*{comment}{Remark}
\DeclareMathOperator{\ev}{\mathrm{E}}
\DeclareMathOperator{\prob}{\mathrm{P}}
\DeclareMathOperator{\mean}{\mathbb{E}}
\DeclareMathOperator{\ep}{\mathbb{G}}
\DeclareMathOperator*{\argmin}{arg\,min}
\DeclareMathOperator*{\pto}{\to}
\DeclareMathOperator{\corr}{Corr}
\DeclareMathOperator{\var}{Var}
\DeclareMathOperator{\diam}{diam}
\DeclareMathOperator{\diag}{diag}
\DeclareMathOperator{\rank}{rank}
\DeclareMathOperator{\se}{se}
\newcommand{\Beta}{\mathit{B}}
\newcommand{\Tau}{\mathit{T}}
\title{\normalsize\uppercase{Cluster-Robust Bootstrap Inference in\\ Quantile Regression Models}}
\date{\pub{\normalsize\today}}
\begin{document}
\maketitle
\begin{abstract}\small
In this paper I develop a wild bootstrap procedure for cluster-robust inference in linear quantile regression models. I show that the bootstrap leads to asymptotically valid inference on the entire quantile regression process in a setting with a large number of small, heterogeneous clusters and provides consistent estimates of the asymptotic covariance function of that process. The proposed bootstrap procedure is easy to implement and performs well even when the number of clusters is much smaller than the sample size. An application to Project STAR data is provided.
\\
\\
\pub{\emph{JEL classification}: C01, C15, C21 \\}
\pub{\emph{Keywords}: quantile regression, cluster-robust standard errors, bootstrap}
\pre{\emph{Keywords}: quantile, cluster-robust standard errors, resampling, correlated data, grouped data}
\end{abstract}

\pre{\thispagestyle{empty}\newpage\setcounter{page}{1}}
\pre{\begin{bibunit}[chicago]}

\section{Introduction}
It is common practice in economics and statistics to conduct inference that is robust to within-cluster dependence. Examples of such clusters are households, classrooms, firms, cities, or counties. We have to expect that units within these clusters influence one another or are influenced by the same sociological, technical, political, or environmental shocks. To account for the presence of data clusters, the literature frequently recommends inference using cluster-robust versions of the bootstrap; see, among many others, \citet{bertrandetal2004} and \citet{cameronetal2008} for an overview in the context of linear regression models estimated by least squares. In this paper I develop a bootstrap method for cluster-robust inference in linear quantile regression (QR) models. The method, which I refer to as \emph{wild gradient bootstrap}, is an extension of a wild bootstrap procedure proposed by \citet{chenetal2003}. Despite the fact that it involves resampling the QR gradient process, the wild gradient bootstrap is fast and easy to implement because it does not involve finding zeros of the gradient during each bootstrap iteration. I show that the wild gradient bootstrap allows for the construction of asymptotically valid bootstrap standard errors, hypothesis tests both at individual quantiles or over ranges of quantiles, and confidence bands for the QR coefficient function.

Quantile regression, introduced by \citet{koenkerbassett1978}, has become an important empirical tool because it enables the researcher to quantify the effect of a set of covariates on the entire conditional distribution of the outcome of interest. This is in sharp contrast to conventional mean regression methods, where only the conditional mean can be considered. A disadvantage of QR in comparison to least squares methods is that the asymptotic variance of the QR coefficient function is notoriously difficult to estimate due to its dependence on the unknown conditional density of the response variable. An analytical estimate of the asymptotic variance therefore requires a user-chosen kernel and bandwidth. Hence, two researchers working with the same data can arrive at different conclusions simply because they used different kernels or bandwidths. Furthermore, a common concern in applied work is that analytical estimates of asymptotic variances perform poorly in the cluster context when the number of clusters is small or the within-cluster correlation is high; see, e.g., \citet{bertrandetal2004} and \citet{webb2013} for extensive Monte Carlo evidence in the least squares case. Their overall finding is that true null hypotheses are rejected far too often. Simulations in \citet{mackinnonwebb2014} suggest that similar problems also occur when the cluster sizes differ substantially.

I show that the wild gradient bootstrap is robust to each of these concerns: It performs well even when the number of clusters is small, the within-cluster dependence is high, and the cluster sizes are heterogenous. The wild gradient bootstrap consistently estimates the asymptotic distribution and covariance functions of the QR coefficients without relying on kernels, bandwidths, or other user-chosen parameters. As such, this paper complements recent work by \citet{parentesantossilva2013}, who provide analytical, kernel-based covariance matrix estimates for QR with cluster data. Their estimates have the advantage that they are simpler to compute than the bootstrap procedures presented here. However, as the Monte Carlo study in this paper shows, a drawback is that tests based on their covariance matrix estimates can suffer from severe size distortions in the same situations as those described for the mean regression case above. In addition, \citeauthor{parentesantossilva2013}'s method does not allow for uniform inference across quantiles because the limiting QR process generally has an analytically intractable distribution. In contrast, the bootstrap approximations of the distribution and covariance functions developed here can be combined to perform uniform Wald-type inference about the QR coefficient function. 

The wild bootstrap procedure discussed in this paper was first introduced by \citet{chenetal2003} as a way to construct confidence intervals for QR coefficients at a single quantile. However, they only provide heuristic arguments for the consistency of the bootstrap approximation and note that ``as far as [they] know, there is no analytical proof that the bootstrap method is valid for the general quantile regression model with correlated observations.'' I considerably extend the scope of their method under explicit regularity conditions to allow for inference on the entire QR process and uniformly consistent covariance matrix estimates of that process. Some parts of the proofs below rely on a recent result by \citet{kato2011} regarding the convergence of bootstrap moments. In turn, his results build on a technique developed in \citet{alexander1985} and tail bounds for the empirical process given in \citet{vandervaartwellner1996}. I also utilize empirical process results of \citet{pollard1991b} and \citet{kosorok2003} to address some nonstandard issues arising from the fact that I allow clusters to be heterogeneous both in terms of their size and their distributions.

\changes{Other types of wild bootstrap procedures for QR are given in \citet{fengetal2010} and \citet{davidson2012}. They do not deal with cluster data but their methods are likely to generalize in this direction. Although typically only convergence of the bootstrap distribution is shown, these and other bootstrap methods have been suggested in the literature as ways to construct bootstrap standard errors or, more generally, bootstrap covariance matrix estimates. \citet{hahn1995} and \citet{goncalveswhite2005} explicitly caution against such conclusions because convergence in distribution does not imply convergence of moments without uniform integrability conditions. This paper establishes these conditions for QR estimates in the cluster context. As I show in my Monte Carlo study, the availability of a bootstrap covariance matrix estimate is crucial for bootstrap tests to have good size and power in many empirically relevant situations.}

Cluster-robust inference in linear regression has a long history in economics; see \citet{cameronmiller2014} for a recent survey. \citet{kloek1981} is an early reference. \citet{moulton1990} highlights the importance of correcting inference for within-cluster correlation when covariates do not vary within clusters. However, apart from \citet{chenetal2003} and \citet{parentesantossilva2013}, cluster inference in QR models has not received much attention. Notable exceptions are \citet{wanghe2007} and \citet{wang2009}, who develop methods for cluster-robust quantile rank score inference, \citet{chetverikovetal2013}, who introduce a method for instrumental variables estimation in a QR model with cluster-level treatment, and \citet{yoongalvao2013}, who discuss QR in a panel model where clusters arise from correlation of individual units over time.

The paper is organized as follows: Section \ref{s:qreg} states and discusses several assumptions that are then used to establish the large sample distribution of the QR estimator with cluster data. Section \ref{s:boot} introduces the wild gradient bootstrap procedure and shows how it can be applied to conduct bootstrap inference on the QR process. Section \ref{s:mc} illustrates the finite-sample behavior of the wild gradient bootstrap in three Monte Carlo experiments. Section \ref{s:star} contains a brief application of the proposed bootstrap procedure to Project STAR data. Section \ref{s:conc} concludes. The appendix contains auxiliary results and proofs. 

%\pub{\addlines}

I use the following notation throughout the paper: $|\cdot|$ is Euclidean norm and $1\{\cdot\}$ is the indicator function. Limits are as $n\to\infty$ unless otherwise noted and convergence in distribution is indicated by $\leadsto$.

\section{Quantile Regression with Clustered Data}\label{s:qreg}

This section discusses linear QR with clustered data and outlines the basic assumptions used to justify asymptotic inference. Then I establish weak convergence of the QR estimator in the cluster context.

Regression quantiles in a framework with cluster data express the conditional quantiles of a response variable $Y_{ik}$ in terms of an observed covariate vector $X_{ik}$. Here $i$ indexes clusters and $k$ indexes observations within that cluster. There are $n$ clusters and cluster $i$ has $c_i$ observations. The cluster sizes $c_i$ need not be identical across $i$, but will be taken to be small relative to $n$ for the asymptotic theory. Because there is typically no natural ordering of observations in the same cluster (unless $k$ indexes time), I allow for arbitrary within-cluster dependence of the data. 
\begin{assumption}\label{as:data}
For all $i,j \geq 1$ with $i\neq j$ and all $1\leq k\leq c_i$, $1\leq l\leq c_j$, the random vectors $(Y_{ik}, X_{ik}^\top)^\top$ and $(Y_{jl}, X_{jl}^\top)^\top$ are independent. The cluster sizes $c_i$ are bounded by some $c_{\max} <\infty$ uniformly in $i\geq 1$.
\end{assumption}

The $\tau$th quantile function of $Y_{ik}$ conditional on $X_{ik}=x$ is given by $Q_{{ik}}(\tau\mid x) := \inf\{y: \prob(Y_{ik}\leq y\mid X_{ik} = x)\geq \tau\}$, where $\tau\in(0,1)$. I assume that the linear QR framework is an appropriate model for the data.
\begin{assumption}\label{as:model}
For $\{Y_{ik} : i\geq 1, 1\leq k\leq c_i\}$, the $\tau$th quantile function satisfies $$Q_{{ik}}(\tau\mid x) = x^\top \beta(\tau),\qquad x\in\mathcal{X}_{ik}\subset \mathbb{R}^d,~\tau\in\Tau,$$ where $\mathcal{X}_{ik}$ is the support of $X_{ik}$ and $\Tau$ is a closed subset of $(0,1)$. For all $\tau\in\Tau$, $\beta(\tau)$ is contained in the interior of a compact and convex set $\Beta\subset\mathbb{R}^d$.
\end{assumption}
\begin{comments}
\begin{inparaenum}[(i)] 
\item Because $\tau\mapsto\beta(\tau)$ does not depend on $i$, this assumption implicitly rules out cluster-level ``fixed effects'' as they would lead to incidental parameter problems; see \citet{koenker2004}. It does \emph{not} rule out covariates that vary at the cluster level and, more importantly, fixed effects for levels above the cluster level. For example, the application in Section \ref{s:star} has classroom-level clusters and school-level fixed effects. There are several ways to address the incidental parameters problem when more is known about the dependence structure in the data; see \citet{yoongalvao2013} and the references therein.

\item The assumption of compactness of $\Beta$ has no impact on the estimation of the QR model in practice because $\Beta$ can always be viewed as large. Compactness is, however, essential for the validity of bootstrap moment estimates in the QR context; see the discussion below Theorem \ref{th:bootse} in the next section.
\end{inparaenum}
\end{comments}

Estimates of the unknown QR coefficient function $\tau\mapsto \beta(\tau)$ can be computed with the help of the \citet{koenkerbassett1978} check function $\rho_\tau(z) = (\tau - 1\{z < 0\})z$. For clustered data, the QR problem minimizes
\begin{equation*}
\mathbb{M}_{n}(\beta,\tau) := \frac{1}{n}\sum_{i=1}^n\sum_{k=1}^{c_i}\rho_{\tau}(Y_{ik} - X_{ik}^\top \beta)% = \mean_n m_{\beta,\tau}
\end{equation*}
with respect to $\beta$ for a given $\tau$ so that $\tau \mapsto \beta(\tau)$ is estimated by
\begin{equation*}
\tau\mapsto \hat{\beta}_n(\tau) := \argmin_{\beta\in\Beta}\mathbb{M}_{n}(\beta,\tau), \qquad \tau\in\Tau.
\end{equation*}
The main goal of this paper is to provide a method for cluster-robust bootstrap inference about the QR coefficient function that is valid uniformly on the entire set $\Tau$ and leads to cluster-robust bootstrap covariance matrix estimates for $\tau\mapsto \hat{\beta}_n(\tau)$. The validity of this method relies on the asymptotic normal approximation to the distribution of $\sqrt{n}(\hat{\beta}_n(\tau) - \beta(\tau))$, which I turn to next.

Asymptotic normality of the QR estimates in the cluster context requires straightforward extensions of smoothness and moment conditions familiar from the iid case. The following assumption allows for arbitrary heterogeneity of the clusters as long as the observations within these clusters satisfy mild restrictions on the smoothness of their conditional distributions and the tail behavior of the covariates. This assumption is needed to ensure identification of the QR coefficient function and to justify an approximation argument following immediately below. 
\begin{assumption}\label{as:smooth}
\begin{compactenum}[\upshape(i)]
\item\label{as:smooth:mom} $\ev |X_{ik}|^q< \infty$ uniformly in $i\geq 1$ and $1\leq k\leq c_i$ for some $q > 2$, 
\item $n^{-1} \sum_{i=1}^n \sum_{k=1}^{c_i} \ev X_{ik}X_{ik}^\top$ is positive definite, uniformly in $n$,
\item\label{as:smooth:den} the conditional density $f_{{ik}}(y\mid X_{ik}=x)$ of $Y_{ik}$ and its derivative in $y$ are bounded above uniformly in $y$ and $x\in\mathcal{X}_{ik}$, uniformly in $i\geq 1$ and $1\leq k\leq c_i$, and
\item\label{as:smooth:der} $f_{{ik}}(x^\top\beta \mid X_{ik}=x)$ is bounded away from zero uniformly in $\beta\in\Beta$ and $x\in\mathcal{X}_{ik}$, uniformly in $i\geq 1$ and $1\leq k\leq c_i$.%\marginpar{!}
\end{compactenum}
\end{assumption}
To establish distributional convergence of the QR estimator, I consider the recentered population objective function $\beta\mapsto M_n(\beta,\tau) := \ev (\mathbb{M}_{n}(\beta,\tau) - \mathbb{M}_{n}(\beta(\tau),\tau))$. The recentering ensures that $M_n$ is well defined without moment conditions on the response variable. Provided Assumptions \ref{as:model} and \ref{as:smooth} hold, the map $\beta\mapsto M_n(\beta,\tau)$ is differentiable with derivative $M_n'(\beta, \tau) := \partial M_n(\beta,\tau) / \partial \beta^\top$ and achieves its minimum at $\beta(\tau)$ by convexity. I show in the appendix that under Assumptions \ref{as:data}-\ref{as:smooth} we can write
\begin{equation}\label{eq:taylorex}
0 = \sqrt{n} M_n'(\beta(\tau), \tau) =  \sqrt{n} M_n'(\hat{\beta}_n(\tau), \tau) - J_n(\tau)\sqrt{n}\bigl(\hat{\beta}_n(\tau) - \beta(\tau)\bigr) + o_{\prob} (1)
\end{equation}
uniformly in $\tau\in\Tau$ by a mean value expansion about $\hat{\beta}_n(\tau)$. Here 
$J_n(\tau)$ is the Jacobian matrix of the expansion evaluated at $\beta(\tau)$,
$$ J_n(\tau) = \frac{1}{n} \sum_{i=1}^n \sum_{k=1}^{c_i} \ev f_{ik}\bigl( X_{ik}^\top\beta(\tau) \mid X_{ik}\bigr) X_{ik}X_{ik}^\top.$$

%\pub{\addlines}

After rearranging \eqref{eq:taylorex}, a stochastic equicontinuity argument (see the appendix for details) can be used to show that $\sqrt{n} M_n'(\hat{\beta}_n(\tau), \tau)$ is, uniformly in $\tau\in\Tau$, within $o_{\prob} (1)$ of the first term on the right of
\begin{equation} 
J_n(\tau)  \sqrt{n}\bigl(\hat{\beta}_n(\tau) - \beta(\tau)\bigr) = \frac{1}{\sqrt{n}} \sum_{i=1}^n \sum_{k=1}^{c_i} \psi_\tau\bigl(Y_{ik}-X_{ik}^\top\beta(\tau)\bigr) X_{ik} + o_{\prob}(1),\label{eq:bahadur}
\end{equation}
where $\psi_\tau(z) = \tau - 1\{z < 0\}$. The outer sum on the right-hand side can be viewed as an empirical process evaluated at functions of the form $\sum_{k=1}^{c_i} \psi_\tau(Y_{ik}-X_{ik}^\top\beta(\tau)) X_{ik}$ indexed by $\tau\in\Tau$.\footnote{In view of Assumption \ref{as:data}, we can always take $(Y_{ik},X_{ik}^\top)^\top=0$ for $c_i<k\leq c_{\max}$ whenever $c_i < c_{\max}$ to make this a well-defined class of functions from $\mathbb{R}^{c_{\max}}\times\mathbb{R}^{d\times c_{\max}}$ to $\mathbb{R}^d$.} This empirical process has covariances
$$\Sigma_n(\tau,\tau') := \frac{1}{n} \sum_{i=1}^n \sum_{k=1}^{c_i}\sum_{l=1}^{c_i} \ev \psi_\tau\bigl(Y_{ik}-X_{ik}^\top\beta(\tau)\bigr) \psi_{\tau'}\bigl(Y_{il}-X_{il}^\top\beta(\tau')\bigr)X_{ik}X_{il}^\top, \qquad \tau,\tau'\in\Tau.$$
\phantomsection%
\label{rev:liangzeger}%
\changes{As a referee points out, similar covariances arise in the generalized estimating equations framework of \citet{liangzeger1986}.}

In the absence of clusters (i.e., $c_i\equiv 1$), $\Sigma_n(\tau,\tau')$ reduces to the familiar form of $n^{-1}\sum_{i=1}^n \ev X_{i1}X_{i1}^\top$ times the covariance function $(\min\{\tau,\tau'\}-\tau\tau')I_d$ of the standard $d$-dimensional Brownian bridge, where $I_d$ is the identity matrix of size $d$. Because of the within-cluster dependence, the structure of $\Sigma_n(\tau,\tau')$ is now significantly more involved. This does not change in the limit as $n\to\infty$, which is assumed to exist along with the limit of the Jacobian.
\begin{assumption}\label{as:gstat}
$J(\tau) = \lim_{n\to\infty} J_n(\tau)$, $\Sigma(\tau,\tau') = \lim_{n\to\infty} \Sigma_n(\tau,\tau')$ exist for $\tau,\tau'\in\Tau$.
\end{assumption}
\begin{comment}
\noindent I show in the appendix that under Assumptions \ref{as:data}-\ref{as:smooth} the pointwise convergence in Assumption \ref{as:gstat} already implies uniform convergence of $J_n$ and $\Sigma_n$.
\end{comment}

The matrix limit $J(\tau)$ is positive definite by Assumption \ref{as:smooth}. Hence, the asymptotic distribution of $\sqrt{n}(\hat{\beta}_n(\tau) - \beta(\tau))$ can be determined via the continuous mapping theorem and an application of a central limit theorem to the right-hand side of \eqref{eq:bahadur}. The following theorem confirms that this even remains valid when $\tau \mapsto \sqrt{n}(\hat{\beta}_n(\tau) - \beta(\tau))$ is viewed as a random process indexed by $\Tau$. The distributional convergence then occurs relative to $\ell^\infty(T)^d$, the class of uniformly bounded functions on $\Tau$ with values in $\mathbb{R}^d$.
\begin{theorem}\label{th:clt}
Suppose Assumptions \ref{as:data}-\ref{as:gstat} hold. Then $\{\sqrt{n}(\hat{\beta}_n(\tau) - \beta(\tau)): \tau\in\Tau\}$ converges in distribution to a mean-zero Gaussian process $\{\mathbb{Z}(\tau):\tau\in\Tau\}$ with covariance function $\ev\mathbb{Z}(\tau)\mathbb{Z}(\tau')^\top= J^{-1}(\tau)\Sigma(\tau,\tau')J^{-1}(\tau')$, $\tau,\tau'\in\Tau$.
\end{theorem}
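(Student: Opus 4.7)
The plan is to leverage the Bahadur-type representation already displayed in~\eqref{eq:bahadur} and obtain the distributional limit of $\sqrt{n}(\hat{\beta}_n(\tau) - \beta(\tau))$ by combining (i) a functional central limit theorem for the cluster score process on the right-hand side of~\eqref{eq:bahadur} with (ii) uniform convergence $J_n \to J$ and the continuous mapping theorem applied to the inversion map. Rearranging~\eqref{eq:bahadur} gives, uniformly in $\tau\in\Tau$,
\[
\sqrt{n}\bigl(\hat{\beta}_n(\tau) - \beta(\tau)\bigr) = J_n(\tau)^{-1}\mathbb{G}_n(\tau) + o_{\prob}(1),
\]
where $\mathbb{G}_n(\tau) := n^{-1/2}\sum_{i=1}^n \sum_{k=1}^{c_i}\psi_\tau(Y_{ik}-X_{ik}^\top\beta(\tau))X_{ik}$. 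So the task reduces to (a) weak convergence of $\mathbb{G}_n$ in $\ell^\infty(\Tau)^d$ to a tight mean-zero Gaussian process $\mathbb{G}$ with covariance function $\Sigma$, and (b) uniform convergence in probability of $J_n^{-1}$ to the continuous, uniformly positive-definite limit $J^{-1}$.

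For~(a) I would treat $\mathbb{G}_n$ as an empirical process built from summands that are independent across $i$ but heterogeneous in size and distribution (Assumption~\ref{as:data}), indexed by the $\tau$-parametrized class
\[
\mathcal{F} = \Bigl\{(y,x)\mapsto \sum_{k=1}^{c_{\max}}\psi_\tau\bigl(y_k - x_k^\top\beta(\tau)\bigr)\,x_k : \tau\in\Tau\Bigr\},
\]
using the zero-padding convention from the footnote to place every cluster's observations in a common space. Finite-dimensional convergence to the Gaussian law with covariance $\Sigma(\tau,\tau')$ follows from the Lindeberg--Feller CLT for triangular arrays; the Lindeberg condition is verified using boundedness of $\psi_\tau$, the uniform cluster-size bound $c_i \le c_{\max}$ from Assumption~\ref{as:data}, and the uniform $L^q$ moment bound on $|X_{ik}|$ with $q > 2$ from Assumption~\ref{as:smooth}. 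For asymptotic equicontinuity I would note that the single-index family $\{\psi_\tau(y - x^\top\beta(\tau)):\tau\in\Tau\}$ is a composition of a monotone indicator with the continuous curve $\tau\mapsto\beta(\tau)$, hence VC-subgraph, with integrable envelope $|X_{ik}|$ in $L^q$. The maximal inequalities for heterogeneous independent summands in \citet{pollard1991b} and the manageable-triangular-array Donsker result of \citet{kosorok2003} then deliver tightness and, together with finite-dimensional convergence, weak convergence $\mathbb{G}_n \leadsto \mathbb{G}$ in $\ell^\infty(\Tau)^d$.

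For~(b), Assumption~\ref{as:gstat} supplies pointwise convergence of $J_n$, while the uniform upper bound on $\partial_y f_{ik}$ in Assumption~\ref{as:smooth} together with compactness of $\Beta$ yields a modulus of continuity for $\tau\mapsto J_n(\tau)$ that is uniform in $n$. Pointwise convergence on a countable dense subset of the compact set $\Tau$ combined with this equicontinuity gives uniform convergence $J_n \to J$, which is the content of the remark following Assumption~\ref{as:gstat}. Uniform positive-definiteness of $J(\tau)$ (inherited from the uniform lower bound on $f_{ik}$ in Assumption~\ref{as:smooth}) makes matrix inversion continuous and uniformly bounded in a neighborhood of $J$, so applying the continuous mapping theorem to the joint convergence $(J_n^{-1},\mathbb{G}_n)\leadsto (J^{-1},\mathbb{G})$ delivers the stated weak limit $\mathbb{Z}(\tau) = J^{-1}(\tau)\mathbb{G}(\tau)$ with covariance $J^{-1}(\tau)\Sigma(\tau,\tau')J^{-1}(\tau')$.

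The main obstacle is the empirical process step: because clusters are heterogeneous both in size and in distribution, off-the-shelf i.i.d.\ Donsker theorems do not apply, and the tightness argument must proceed through the triangular-array machinery of \citet{pollard1991b} and \citet{kosorok2003}, with care that uniform entropy or bracketing integrals remain finite when the classes are reweighted by each cluster's distinct law and the envelope only possesses $q > 2$ moments. The other ingredients — the mean value expansion~\eqref{eq:taylorex}, uniform consistency of $\hat{\beta}_n$, and the Jacobian-inversion step — are routine extensions of the i.i.d.\ QR asymptotic theory once the cluster-level functional CLT is in hand.
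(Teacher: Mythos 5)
Your proposal is correct and follows essentially the same route as the paper: a mean-value/Bahadur expansion reducing the problem to $J_n^{-1}(\tau)\,\ep_n z_{\beta(\tau),\tau}$, finite-dimensional convergence via the Lindeberg--Feller CLT, stochastic equicontinuity of the score process over a VC-subgraph-type class handled with the heterogeneous-array functional CLT machinery of \citet{pollard1991b} and \citet{kosorok2003}, and uniform convergence of $J_n^{-1}$ plus the continuous mapping theorem. The only ingredient you leave implicit is the measurability (almost measurable Suslin) condition needed to justify symmetrization for independent non-identically distributed clusters, which the paper isolates in a separate lemma but which is part of the same Kosorok (2003) toolkit you invoke.
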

\begin{comment}
\begin{inparaenum}[(i)]
\item The proof of the theorem proceeds via empirical process arguments similar to those used in \citet{angristetal2006}. Their results do not carry over to the present case because heterogeneous cluster data is not covered by their iid assumptions. This prevents the use of standard Donsker theorems and leads to measurability issues typically not encountered in such proofs. Both problems are taken care of through results of \citet{pollard1991b} and \citet{kosorok2003}.

\item The theorem implies joint asymptotic normality of $\sqrt{n}(\hat{\beta}_n(\tau_j) - \beta(\tau_j))$ for every finite set of quantile indices $\tau_j\in\Tau$, $j=1, 2, \dots$; see, e.g., Theorem 18.14 of \citet{vandervaart1998}. The asymptotic covariance at $\tau_{j}$ and $\tau_{j'}$ is $J^{-1}(\tau_j)\Sigma(\tau_j,\tau_{j'})J^{-1}(\tau_{j'})$. If only this finite dimensional convergence is needed, then Assumptions \ref{as:smooth}\eqref{as:smooth:den} and \eqref{as:smooth:der} can be relaxed considerably using the approach of \citet{knight1998}.
\end{inparaenum}
\end{comment}

The non-iid structure of the data causes the asymptotic covariance function of $\sqrt{n}(\hat{\beta}_n(\tau) - \beta(\tau))$ to take on the sandwich form $J^{-1}(\tau)\Sigma(\tau,\tau')J^{-1}(\tau')$. Estimates of these covariances are needed for Wald-type inference. However, in addition to the usual problem of having to control the nuisance quantities $f_{ik}( X_{ik}^\top\beta(\tau) \mid X_{ik})$ contained in the Jacobian $J(\tau)$, the matrix $\Sigma(\tau,\tau')$ now also contains products of quantile crossing indicators $\psi_\tau(Y_{ik}-X_{ik}^\top\beta(\tau)) \psi_{\tau'}(Y_{il}-X_{il}^\top\beta(\tau'))$. For standard plug-in inference, the crossing indicators can be estimated by replacing $\tau\mapsto\beta(\tau)$ with $\tau\mapsto\hat{\beta}_n(\tau)$. The Jacobian is not directly affected by the within-cluster dependence and can be estimated using the bandwidth-driven methods of \citet{hendrickskoenker1992} and \citet{powell1986}. 

\citet{parentesantossilva2013} propose such a plug-in estimator based on \citeauthor{powell1986}'s method and show that it leads to asymptotically valid covariance matrix estimates at individual quantiles in a setting with  iid (necessarily equal-sized) clusters.\footnote{Their method is likely to generalize to allow for pointwise inference in the presence of clusters with unequal sizes.} However, both the \citetalias{hendrickskoenker1992} and \citeauthor{powell1986} estimators are sensitive to the choice of bandwidth. \citet[pp.\ 5-6]{parentesantossilva2013} give practical suggestions on how to select this bandwidth in the cluster context, but also note that some standard bandwidth rules derived for iid data seem to not perform well in some contexts. To my knowledge, bandwidth rules for QR that explicitly deal with cluster data are currently not available. Moreover, \citeauthor{parentesantossilva2013}'s method does not extend to uniform inference over ranges of quantiles because the limiting Gaussian process $\{\mathbb{Z}(\tau):\tau\in\Tau\}$ from Theorem \ref{th:clt} is nuisance parameter dependent and cannot be normalized to be free of these parameters. Critical values for inference based on $\mathbb{Z}$ therefore cannot be tabulated. 

In the next section I present a bootstrap method that is able to approximate the distribution of the limiting process, consistently estimates the covariance function of that process, and avoids the issue of choosing a bandwidth (and kernel) altogether. 

\section{Bootstrap Algorithms for Cluster-Robust Inference}\label{s:boot}
In this section I describe and establish the validity of procedures for cluster-robust bootstrap inference (Algorithm \ref{al:binf} below) and cluster-robust confidence bands (Algorithm \ref{al:bcbands}) in QR models. Recall from the discussion above equation \eqref{eq:taylorex} that the population first-order condition of the QR objective function  can be written as
\begin{equation}\label{eq:foc}
\sqrt{n}M'_{n}(\beta(\tau), \tau) =- \frac{1}{\sqrt{n}}\sum_{i=1}^n\sum_{k=1}^{c_i}\ev\psi_{\tau}\bigl(Y_{ik} - X_{ik}^\top\beta(\tau)\bigr)X_{ik} = 0,% = \ev(\mean_n m_{\beta,\tau})
\end{equation}
where $\psi_\tau(z) = \tau - 1\{z < 0\}$ as before. The sample analogue of this condition,
\begin{equation}\label{eq:grad}
\frac{1}{\sqrt{n}}\sum_{i=1}^n\sum_{k=1}^{c_i}\psi_{\tau}(Y_{ik} - X_{ik}^\top\beta)X_{ik} = 0,% = \ev(\mean_n m_{\beta,\tau})
\end{equation}
can be thought of as being nearly solved by the QR estimate $\beta=\hat{\beta}_n(\tau)$.

The idea is now to bootstrap by repeatedly computing solutions to perturbations of \eqref{eq:grad}. To account for the possible heterogeneity in the data, I use \citeauthor{chenetal2003}'s (\citeyear{chenetal2003}) modification of the wild bootstrap \citep{wu1986,liu1988,mammen1992} for QR with correlated data. \citeauthor{chenetal2003}\ use their method to obtain confidence intervals for QR estimators at a single quantile. Here, I considerably extend the scope of their method to allow for inference on the entire QR process and uniformly consistent covariance matrix estimates of that process via the bootstrap; confidence intervals at individual quantiles $\tau_j\in\Tau$, $j=1, 2, \dots$, then follow as a special case. Because \citeauthor{chenetal2003}\ do not give explicit regularity conditions for the validity of their method, this paper also serves as a theoretical justification for their pointwise confidence intervals.

%\pub{\addlines}

To ensure that the bootstrap versions of the QR estimate accurately reflect the within-cluster dependence, the resampling scheme perturbs the gradient condition \eqref{eq:grad} at the cluster level. Let $W_1,\dots,W_n$ be iid copies of a random variable $W$ with $\ev W = 0$,  $\var W=1$, and $\ev|W|^q < \infty$, where $q >2$ as in Assumption \ref{as:smooth}\eqref{as:smooth:mom}. Here $W$ is independent of the data. Define the bootstrap gradient process as
\begin{equation*}
\mathbb{W}_n(\beta,\tau) = \frac{1}{\sqrt{n}}\sum_{i=1}^n W_i \sum_{k=1}^{c_i}\psi_{\tau}(Y_{ik} - X_{ik}^\top\beta)X_{ik}.
\end{equation*}
An obvious strategy for bootstrap resampling would now be to repeatedly solve $\mathbb{W}_n(\beta,\tau) =0$ for $\beta$ with different draws of 
$W_1,\dots,W_n$. However, this type of resampling is impractical because zeros of $\beta\mapsto\mathbb{W}_n(\beta,\tau)$ are difficult to compute %; see also the remarks below Algorithm \ref{al:bse}. 
due to the fact that $\mathbb{W}_n(\beta,\tau) =0$ is not a first-order condition of a convex optimization problem.

Instead, I use the bootstrap gradient process $\mathbb{W}_n(\tau) := \mathbb{W}_n(\hat{\beta}_n(\tau) ,\tau)$ evaluated at the original QR estimate to construct the new objective function
\begin{equation}\label{eq:bootmin}
\beta\mapsto\mathbb{M}^*_{n}(\beta,\tau) = \mathbb{M}_{n}(\beta,\tau) + \mathbb{W}_n(\tau)^\top \beta/\sqrt{n}
\end{equation}
and define the process $\tau\mapsto \hat{\beta}^*_n(\tau)$ as any solution to $\min_{\beta\in\Beta}\mathbb{M}^*_{n}(\beta,\tau)$. Then 
$\hat{\beta}^*_n(\tau)$ can be interpreted as the $\beta$ that nearly solves the corresponding ``first-order condition'' 
\begin{equation*}
\frac{1}{\sqrt{n}}\sum_{i=1}^n\sum_{k=1}^{c_i}\psi_{\tau}(Y_{ik} - X_{ik}^\top\beta)X_{ik} = \mathbb{W}_n(\tau).% = \ev(\mean_n m_{\beta,\tau})
\end{equation*}

This bootstrap, which I refer to as \emph{wild gradient bootstrap}, essentially perturbs the right-hand side of \eqref{eq:grad} instead of the left. Because $\mathbb{W}_n(\tau)$ mimics the original gradient process $n^{-1/2}\sum_{i=1}^n\sum_{k=1}^{c_i}\psi_{\tau}(Y_{ik} - X_{ik}^\top\hat{\beta}_n(\tau))X_{ik}$ just like the original gradient process mimics the population first-order condition \eqref{eq:foc}, choosing $\hat{\beta}^*_n(\tau)$ in such a way induces the left-hand side of the preceding display to match the behavior of $\mathbb{W}_n(\tau)$. The distributions of $\sqrt{n}(\hat{\beta}^*_n(\tau)-\hat{\beta}_n(\tau))$ and $\sqrt{n}(\hat{\beta}_n(\tau) - \beta(\tau))$ can then be expected to be similar. Theorem \ref{th:bootclt} ahead confirms that this is indeed the case, uniformly in $\tau\in\Tau$. 

The distributional convergence occurs both in the standard sense and with probability approaching one, conditional on the sample data $D_n := \{ (Y_{ik}, X_{ik}^\top)^\top : 1\leq k\leq c_i, 1\leq i\leq n\}$. The latter concept is the standard measure of consistency for bootstrap distributions; see, e.g., \citet[p.\ 332]{vandervaart1998}. Let $\mathrm{BL}_1(\ell^\infty(\Tau)^d)$ be the set of functions on $\ell^\infty(\Tau)^d$ with values in $[-1,1]$ that are uniformly Lipschitz and define $\ev^*(\cdot) := \ev(\cdot \mid D_n)$.
\begin{theorem}\label{th:bootclt}
If Assumptions \ref{as:data}-\ref{as:gstat} hold, then $\{\sqrt{n}(\hat{\beta}_n^*(\tau) - \hat{\beta}_n(\tau)):\tau\in\Tau\}$ converges in distribution to the Gaussian process $\{\mathbb{Z}(\tau): \tau\in\Tau\}$ described in Theorem \ref{th:clt}. The convergence also holds conditional on the data in the sense that
$$\sup_{h\in\mathrm{BL}_1(\ell^\infty(\Tau)^d)} \bigl| \ev^* h\bigl(\sqrt{n}\bigl(\hat{\beta}_n^*(\tau) - \hat{\beta}_n(\tau)\bigr)\bigr) - \ev h(\mathbb{Z}(\tau))\bigr| 
%\overset{\prob}{\to} 
\pto^{\prob}
0.$$
\end{theorem}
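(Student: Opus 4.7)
The proof plan is to mirror the argument behind Theorem \ref{th:clt}, deriving the Bahadur-type representation
$$\sqrt{n}\bigl(\hat{\beta}_n^*(\tau) - \hat{\beta}_n(\tau)\bigr) = -J_n(\tau)^{-1}\mathbb{W}_n(\tau) + o_{\prob}(1),$$
uniformly in $\tau\in\Tau$, and then combining it with a functional multiplier central limit theorem for the process $\tau\mapsto\mathbb{W}_n(\tau)$. Convexity of $\mathbb{M}_n^*(\cdot,\tau)$ permits the same subgradient/approximate-first-order-condition machinery as for $\hat{\beta}_n$, and uniform consistency $\sup_{\tau\in\Tau}|\hat{\beta}_n^*(\tau)-\beta(\tau)|=o_\prob(1)$ will follow from a standard convex argmin argument once it is established that $\mathbb{W}_n(\tau)^\top\beta/\sqrt{n}=o_\prob(1)$ uniformly on $\Beta\times\Tau$, which in turn rests on the maximal inequalities of \citet{pollard1991b} applied to the independent cluster-level summands.

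For the Bahadur representation, the approximate first-order condition for $\hat{\beta}_n^*(\tau)$ is
$$\frac{1}{\sqrt{n}}\sum_{i=1}^n\sum_{k=1}^{c_i}\psi_\tau\bigl(Y_{ik}-X_{ik}^\top\hat{\beta}_n^*(\tau)\bigr)X_{ik} = \mathbb{W}_n(\tau) + o_\prob(1),$$
uniformly in $\tau$, with an $O(1/\sqrt{n})$ residual coming from the kink in $\rho_\tau$, while the analogous relation for $\hat{\beta}_n(\tau)$ has right-hand side $0$. Adding and subtracting the gradient evaluated at $\beta(\tau)$ in both identities and invoking the same stochastic equicontinuity step and mean-value expansion used in the argument leading from \eqref{eq:taylorex} to \eqref{eq:bahadur} yields the displayed representation upon subtraction.

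Uniform consistency of $\hat{\beta}_n(\tau)$ and equicontinuity allow me to replace $\hat{\beta}_n(\tau)$ by $\beta(\tau)$ inside $\mathbb{W}_n(\tau)$ up to $o_\prob(1)$, so that $\mathbb{W}_n(\tau) = n^{-1/2}\sum_i W_i g_i(\tau) + o_\prob(1)$ with independent cluster-level summands $g_i(\tau):=\sum_{k=1}^{c_i}\psi_\tau(Y_{ik}-X_{ik}^\top\beta(\tau))X_{ik}$ and iid mean-zero unit-variance multipliers $W_i$ independent of the data. Finite-dimensional convergence to a mean-zero Gaussian law with covariance $\Sigma(\tau,\tau')$ then follows from the Lindeberg--Feller CLT under Assumption \ref{as:gstat} together with $\ev|W|^q<\infty$, and asymptotic tightness follows from a symmetrization/multiplier argument combined with the envelope and entropy bounds already verified for Theorem \ref{th:clt} via \citet{pollard1991b} and \citet{kosorok2003}. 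The conditional statement of the theorem is then the conditional multiplier functional CLT in the form of \citet[Theorem~2.9.6]{vandervaartwellner1996}, together with the extension to independent non-identically-distributed summands employed in \citet{kato2011}. Premultiplying by $-J_n(\tau)^{-1}$ and invoking the continuous mapping theorem in its unconditional and conditional forms then delivers both assertions.

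The principal technical obstacle will be the conditional multiplier CLT in the heterogeneous cluster setting, since textbook formulations require iid observations. The cluster summands $g_i(\tau)$ are not identically distributed---they differ in cluster size and in marginal law---so the class $\{g_i:i\geq 1\}$ indexed by $\tau\in\Tau$ has to be shown to admit a uniformly $L^q$-integrable envelope, to have summable uniform entropy, and to satisfy a conditional Lindeberg condition $\prob$-almost surely. The joint use of Assumption \ref{as:smooth}\eqref{as:smooth:mom} and the moment bound $\ev|W|^q<\infty$ with $q>2$ is precisely what supplies the tail control required for conditional asymptotic tightness in probability.
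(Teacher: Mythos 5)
Your proposal is correct and follows essentially the same route as the paper: a uniform Bahadur-type representation $\sqrt{n}(\hat{\beta}_n^*(\tau)-\hat{\beta}_n(\tau)) = -J^{-1}(\tau)\mathbb{W}_n(\tau)+o_{\prob}(1)$, replacement of $\hat{\beta}_n(\tau)$ by $\beta(\tau)$ inside the multiplier process via stochastic equicontinuity, and then an (un)conditional multiplier functional CLT followed by the continuous mapping theorem. The only difference is one of citation rather than substance: where you invoke van der Vaart and Wellner's iid multiplier CLT plus a Kato-style extension, the paper directly applies Kosorok's (2003) Theorem 3 for independent non-identically distributed arrays, whose ``almost measurable Suslin'' condition and entropy/envelope requirements are exactly the obstacles you correctly identify as needing verification.
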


Minimizing the bootstrap objective function \eqref{eq:bootmin} is %, as opposed to finding zeros of $\beta\mapsto\mathbb{W}_n(\beta,\tau)$, 
a standard convex optimization problem. In fact, as the following algorithm shows, the problem can be implemented in statistical software as a linear QR with one additional observation. The idea is to pick a large enough $Y^*$ to ensure $Y^* > X^{*\top}\hat{\beta}^*_n(\tau)$ for all $\tau\in\Tau$, where $X^* = -\sqrt{n}\mathbb{W}_{n}(\tau)/\tau$. Then $\sqrt{n}\mathbb{W}_n(\tau)^\top \beta = \rho_{\tau}(Y^* - X^{*\top} \beta) - \tau Y^*$ and $- \tau Y^*$ can be ignored because $\hat{\beta}^*_n(\tau)$ not only minimizes \eqref{eq:bootmin}, but also $\beta \mapsto n\mathbb{M}^*_{n}(\beta,\tau)- \tau Y^*$.
\begin{algorithm}[Wild gradient bootstrap]\label{al:bse}
\begin{compactenum}
\item\label{al:bse:1} Run a QR of $Y_{ik}$ on $X_{ik}$ and save $\tau\mapsto \hat{\beta}_n(\tau)$. Compute $Y^* = n \max_{1\leq i \leq n} c_i \max_{1\leq i \leq n, 1\leq k\leq c_i} |Y_{ik}|$.
\item\label{al:bse:2} Draw iid copies $W_1,\dots,W_n$ of $W$ and compute $\mathbb{W}_n(\tau) := \mathbb{W}_n(\hat{\beta}_n(\tau) ,\tau)$ for that draw. Generate $X^* = -\sqrt{n}\mathbb{W}_{n}(\tau)/\tau$ and rerun the QR from Step \ref{al:bse:1} with the additional observation $(Y^*, X^{*\top})^\top$ to obtain
$\tau\mapsto \hat{\beta}^*_{n}(\tau) = \argmin_{\beta}\sum_{i=1}^n\sum_{k=1}^{c_i}\rho_{\tau}(Y_{ik} - X_{ik}^\top \beta) + \rho_{\tau}(Y^* - X^{*\top} \beta).$
\item\label{al:bse:3} Repeat Step \ref{al:bse:2} $m$ times, each with a new realization of $W_1,\dots,W_n$. 

\item Approximate the distribution of $\{\sqrt{n}(\hat{\beta}_n(\tau) - \beta(\tau)) : \tau\in\Tau \}$ by the empirical distribution of the $m$ observations of $\{\sqrt{n}(\hat{\beta}_n^*(\tau) - \hat{\beta}_n(\tau)) : \tau\in\Tau\}$.
\end{compactenum}
\end{algorithm}
%
%\begin{algorithm}[Bootstrap standard errors]\label{al:bse}
%\begin{compactenum}
%\item\label{al:bse:1} Run a quantile regression of $Y_{ik}$ on $X_{ik}$ and save $\tau\mapsto \hat{\beta}_n(\tau)$.
%\item Compute $Y_{\max} = n \max_i c_i \max_{i,k} |Y_{ik}|$ and, for each $b=1,\dots,B$,
%\begin{compactenum}[(i)]
%\item draw a set of iid copies $\{W_1,\dots,W_n\}_b$ of $W$ and compute $\mathbb{W}_{nb}(\tau) := \mathbb{W}_n(\hat{\beta}_n(\tau) ,\tau)$ for that draw,
%\item generate $X_{b} = -\sqrt{n}\mathbb{W}_{nb}(\tau)/\tau$ and rerun the quantile regression from Step \ref{al:bse:1} with the additional observation $(Y_{\max}, X_b^\top)^\top$ to obtain
%\begin{equation*}
%\tau\mapsto \hat{\beta}^*_{nb}(\tau) := \argmin_{\beta\in\Beta}\sum_{i=1}^n\sum_{k=1}^{c_i}\rho_{\tau}(Y_{ik} - X_{ik}^\top \beta) + \rho_{\tau}(Y_{\max} - X_{b}^\top \beta).
%\end{equation*}
% \eqref{eq:bootmin} because $Y_{\max} > X_{b}^\top \hat{\beta}^*_{nb}(\tau)$ for all $\tau\in\Tau$.
%\end{compactenum}
%\item Let $\bar{\beta}^*_{nB}(\tau) = B^{-1}\sum_{b=1}^B\hat{\beta}^*_{nb}(\tau)$ and estimate the covariance function of $\tau\mapsto \hat{\beta}_n(\tau)$ by 
%\begin{equation*}
%(\tau,\tau')\mapsto\frac{1}{B}\sum_{b=1}^B\bigl(\hat{\beta}^*_{nb}(\tau) - \bar{\beta}^*_{nB}(\tau)\bigr)\bigl(\hat{\beta}^*_{nb}(\tau') - \bar{\beta}^*_{nB}(\tau')\bigr)^\top
%\end{equation*}
%\end{compactenum}
%\end{algorithm}

\begin{comments}
\begin{inparaenum}[(i)]
\item The idea of representing a perturbed QR problem as linear QR with one additional observation is due to \citet{parzenetal1994}. The value of $Y^*$ given in the first step of the algorithm is similar to the one suggested by \citet[Algorithm A.4]{bellonietal2011}

\phantomsection%
\label{rev:mammenrec}%
\changes{\item The Monte Carlo experiments in the next section suggest that in practice $W$ should be drawn from the \citet{mammen1992} 2-point distribution that takes on the value $-(\sqrt{5}-1)/2$ with probability $(\sqrt{5}+1)/(2\sqrt{5})$ and the value $(\sqrt{5}+1)/2$ with probability $(\sqrt{5}-1)/(2\sqrt{5})$. Other distributions such as the Rademacher or \citet{webb2013} distributions can be used, but there is no evidence that this would lead to better inference.}
\end{inparaenum}
\end{comments}

By choosing the number of bootstrap simulations $m$ in Algorithm \ref{al:bse} large enough,\footnote{\citet{andrewsbuchinsky2000} and \citet{davidsonmackinnon2000} provide methods for determining an appropriate number of bootstrap simulations $m$ in practice.} the distribution of $\sqrt{n}(\hat{\beta}_n^*(\tau) - \hat{\beta}_n(\tau))$ or functionals thereof can be approximated with arbitrary precision. I therefore let $m\to\infty$ in the following and define the bootstrap estimate of the asymptotic covariance function $V(\tau,\tau') := J^{-1}(\tau)\Sigma(\tau,\tau')J^{-1}(\tau')$ directly as
\begin{equation*}
\hat{V}_n^*(\tau,\tau') = \ev^*n\bigl(\hat{\beta}_n^*(\tau) - \hat{\beta}_n(\tau)\bigr)\bigl(\hat{\beta}_n^*(\tau') - \hat{\beta}_n(\tau')\bigr)^\top, \qquad \tau,\tau'\in\Tau.
\end{equation*}
In practice one simply computes $\hat{V}_n^*(\tau,\tau')$ as the sample covariance of the $m$ bootstrap observations of $\sqrt{n}(\hat{\beta}_n^*(\tau) - \hat{\beta}_n(\tau))$ and $\sqrt{n}(\hat{\beta}_n^*(\tau') - \hat{\beta}_n(\tau'))$. Cluster-robust standard errors of $\hat{\beta}_n(\tau)$ are the square-roots of the diagonal elements of $\hat{V}_n^*(\tau,\tau)/n$.

\pub{\addlines[2]}

Availability of a consistent estimate of the covariance function of the limiting process is not strictly required for valid bootstrap inference on the QR process. Algorithm \ref{al:binf} ahead shows how this is done. However, especially in the presence of data clusters, applied researchers frequently emphasize the importance of bootstrap covariance matrix estimates for Wald-type inference in mean regression models; see, among others, \citet{bertrandetal2004} and \citet{cameronetal2008}. As the Monte Carlo results in the next section show, reweighting by the bootstrap covariance matrix is equally important for cluster-robust inference in the QR context. Still, because convergence in distribution does not imply convergence in moments, consistency of $\hat{V}_n^*(\tau,\tau')$ does not immediately follow from Theorem \ref{th:bootclt}. 

Fortunately, the wild gradient bootstrap is able to consistently approximate the asymptotic variance of $\sqrt{n}(\hat{\beta}_n(\tau) - \beta_n(\tau))$. If the covariates have moments of high enough order, then the approximation of the asymptotic covariance function $V(\tau,\tau')$ through its bootstrap counterpart $\hat{V}_n^*(\tau,\tau')$ is in fact uniform in $\tau,\tau'\in\Tau$. 
\begin{theorem}\label{th:bootse}
Suppose Assumptions \ref{as:data}-\ref{as:gstat} hold. Then,
\begin{compactenum}[\upshape(i)]
\item\label{th:bootse_pw} for all $\tau,\tau'\in\Tau$, $\hat{V}_n^*(\tau,\tau') \pto^{\prob} V(\tau,\tau')$, and

\item\label{th:bootse_uni} if $q>4$ in Assumption \ref{as:smooth}, then $\sup_{\tau,\tau'\in\Tau} | \hat{V}_n^*(\tau,\tau') - V(\tau,\tau') | \pto^{\prob} 0$.
\end{compactenum}
\end{theorem}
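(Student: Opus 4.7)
The plan is to exploit a bootstrap Bahadur representation paralleling \eqref{eq:bahadur}: conditional on the data, $\sqrt{n}(\hat{\beta}_n^*(\tau) - \hat{\beta}_n(\tau)) = J_n^{-1}(\tau)\mathbb{W}_n(\tau) + R_n^*(\tau)$, whose leading term is linear in the iid multipliers $W_i$. Because $\ev^* W_i W_j = 1\{i=j\}$, the bootstrap covariance of the leading term has the closed form $J_n^{-1}(\tau)\hat{\Sigma}_n(\tau,\tau')J_n^{-1}(\tau')$, where
\[
\hat{\Sigma}_n(\tau,\tau') := \frac{1}{n}\sum_{i=1}^{n}\Bigl(\sum_{k=1}^{c_i}\psi_\tau(\hat{U}_{ik}(\tau))X_{ik}\Bigr)\Bigl(\sum_{l=1}^{c_i}\psi_{\tau'}(\hat{U}_{il}(\tau'))X_{il}\Bigr)^{\top},
\]
with $\hat{U}_{ik}(\tau) := Y_{ik} - X_{ik}^\top\hat{\beta}_n(\tau)$. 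Being the sample analogue of $\Sigma_n(\tau,\tau')$ evaluated at $\hat{\beta}_n$, this quantity should converge in probability to $\Sigma(\tau,\tau')$ under Assumptions \ref{as:data}--\ref{as:gstat}: the discontinuities of $\psi_\tau$ are handled by the density bound in Assumption \ref{as:smooth}\eqref{as:smooth:den} together with the consistency of $\hat{\beta}_n$ implied by Theorem \ref{th:clt}, and a cluster-level law of large numbers applies thanks to Assumption \ref{as:data}.

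For part \eqref{th:bootse_pw}, pointwise convergence of $\hat{V}_n^*(\tau,\tau')$ will then follow from combining the conditional weak convergence in Theorem \ref{th:bootclt} with uniform integrability of $|\sqrt{n}(\hat{\beta}_n^*(\tau) - \hat{\beta}_n(\tau))|^2$. I plan to secure uniform integrability by showing $\ev^*|\sqrt{n}(\hat{\beta}_n^*(\tau) - \hat{\beta}_n(\tau))|^{2+\delta} = O_{\prob}(1)$ for some $\delta > 0$, adapting the bootstrap moment argument of \citet{kato2011} to clustered data. His technique exploits convexity of the check function (in the spirit of \citet{alexander1985}) together with Orlicz-norm tail bounds for empirical processes from \citet{vandervaartwellner1996} to bound the bootstrap QR estimator's moments directly; compactness of $\Beta$ (Assumption \ref{as:model}) is essential here, because it prevents $\hat{\beta}_n^*(\tau)$ from escaping to infinity under atypical draws of $W$.

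For part \eqref{th:bootse_uni}, two uniform strengthenings are needed. The first is $\sup_{\tau,\tau'}\|\hat{\Sigma}_n(\tau,\tau') - \Sigma(\tau,\tau')\| \pto^{\prob} 0$ together with $\sup_\tau\|J_n(\tau) - J(\tau)\|\to 0$; both are accessible with the tools already assembled for Theorem \ref{th:clt}, since the remark following Assumption \ref{as:gstat} upgrades pointwise to uniform convergence of the deterministic objects, and the stochastic piece follows from the Donsker-type arguments of \citet{pollard1991b} and \citet{kosorok2003} invoked earlier. The second and principal obstacle is the uniform moment bound $\sup_{\tau\in\Tau}\ev^*|\sqrt{n}(\hat{\beta}_n^*(\tau) - \hat{\beta}_n(\tau))|^{2+\delta} = O_{\prob}(1)$. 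This is where the strengthened assumption $q > 4$ enters: the Kato-type moment bound on the bootstrap QR estimator is driven by moments of a weighted gradient involving $|X_{ik}|^{2+\delta}$, so $q>4$ allows $\delta$ to be strictly positive; combined with a chaining argument across the compact index set $\Tau$, this delivers uniform $(2+\delta)$-integrability. Conditional uniform integrability of the squared bootstrap process then upgrades the uniform conditional weak convergence of Theorem \ref{th:bootclt} to sup-norm convergence of the bootstrap covariance.
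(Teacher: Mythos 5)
Your overall architecture matches the paper's: a bootstrap Bahadur representation whose leading term is linear in the multipliers, the observation that $\ev^*\mathbb{W}_n(\tau)\mathbb{W}_n(\tau')^\top$ is exactly the plug-in matrix $\hat{\Sigma}_n(\tau,\tau')$ (the paper's $\mean_n g_{\hat{\beta}_n(\tau),\tau,\hat{\beta}_n(\tau'),\tau'}$, handled by a uniform law of large numbers over a VC-type class in Lemma \ref{l:sigmaequi}), and Kato-style peeling bounds exploiting compactness of $\Beta$ to obtain bootstrap moment bounds (Lemma \ref{l:uniint}). Your part \eqref{th:bootse_pw} — conditional weak convergence from Theorem \ref{th:bootclt} plus a $(2+\varepsilon)$-moment bound and a truncation argument — is essentially the paper's proof verbatim.

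There is, however, a genuine gap in part \eqref{th:bootse_uni}, in two related places. First, you misattribute the role of $q>4$. Lemma \ref{l:uniint} already delivers $\sup_{n}\ev|\sqrt{n}(\hat{\beta}^*_n(\tau)-\hat{\beta}_n(\tau))|^{p}_{\Tau}<\infty$ for every $p<q$ — with the supremum over $\tau$ \emph{inside} the expectation — so the uniform $(2+\delta)$-integrability you invoke is already available under $q>2$ and cannot be what the strengthened assumption buys. In the paper, $q>4$ is consumed in controlling the \emph{remainder} of an exact mean-value decomposition $\sqrt{n}(\hat{\beta}^*_n-\hat{\beta}_n)=G_n+R_n$: bounding $\ev^*|R_n(\tau)|^2_{\Tau}$ and the cross term $\ev^*|G_n(\tau)|_\Tau|R_n(\tau)|_\Tau$ requires, after Cauchy--Schwarz, \emph{fourth} moments of $\sqrt{n}(\hat{\beta}^*_n-\beta)$ and of the gradient and multiplier processes, and Lemmas \ref{l:maxineq} and \ref{l:uniint} supply those only when $q>4$. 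Second, your closing inference — that conditional uniform integrability of the squared process ``upgrades'' the uniform conditional weak convergence of Theorem \ref{th:bootclt} to sup-norm convergence of the covariance — is not valid as stated: functional weak convergence plus uniform integrability gives convergence of $\ev^*Z_n^*(\tau)Z_n^*(\tau')^\top$ at each fixed pair $(\tau,\tau')$, which is part \eqref{th:bootse_pw}, but not uniformly over $(\tau,\tau')$ without an additional argument. What is needed, and what the paper supplies, is the explicit bound $|\hat{V}_n^*(\tau,\tau')-\ev^*G_n(\tau)G_n(\tau')^\top|_{\Tau\times\Tau}\leq 2\ev^*|G_n(\tau)|_\Tau|R_n(\tau)|_\Tau+\ev^*|R_n(\tau)|^2_\Tau$ together with a stochastic-equicontinuity step reducing $\ev^*G_nG_n^\top$ to $J_n^{-1}\ev^*\mathbb{W}_n\mathbb{W}_n^\top J_n^{-1}$. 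You have the closed form of the leading term, but the uniform second-moment control of the remainder — precisely the place where $q>4$ is spent — is missing from the proposal.
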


\begin{comments}
%\begin{inparaenum}[(i)]
%\item 
(i)~For the proof of this theorem I extend ideas developed by \citet{kato2011}, who in turn relies to some extent on the strategy used in the proof of Theorem 3.2.5 of \citet{vandervaartwellner1996} and \citeauthor{alexander1985}'s (\citeyear{alexander1985}) ``peeling device.'' \citeauthor{kato2011}'s results do not apply to the present case because he works with a single quantile, iid data, and a different bootstrap method. For the proof I develop new tail bounds on the QR gradient process and differences of such processes. They yield $\ev\sup_{\tau\in\Tau}|\sqrt{n}( \hat{\beta}^*_n(\tau) - \hat{\beta}_n(\tau))|^{p}< \infty$ and $\ev\sup_{\tau\in\Tau}|\sqrt{n}( \hat{\beta}_n(\tau) - \beta(\tau))|^{p}< \infty$ uniformly in $n$ for $p < q$. The first part of the theorem then follows from Theorem \ref{th:bootclt} and a bootstrap version of a standard uniform integrability result. The proof of the second part is considerably more involved, but relies on the same tail bounds.

%\item %Denote by $\{\mathbb{Z}(\tau):\tau\in\Tau\}$ the Gaussian limit process described in Theorems \ref{th:clt} and \ref{th:bootclt}. 
(ii)~A byproduct of the proof of the theorem is the result that the wild gradient bootstrap correctly approximates other (possibly fractional) order moments of $\mathbb{Z}(\tau)$ if the covariates have moments of slightly higher order: As long as $p<q$, the results in the appendix immediately give $\ev^* |\sqrt{n}(\hat{\beta}_n^*(\tau) - \hat{\beta}_n(\tau)) |^p \pto^{\prob} \ev |\mathbb{Z}(\tau)|^p$.

%\item 
(iii)~\citet{ghoshetal1984} show that the bootstrap variance estimate of an unconditional quantile can be inconsistent if the bootstrap observations are too likely to take on extreme values. This problem is generic and does not depend on the specific type of bootstrap. The boundedness of the parameter space imposed in Assumption \ref{as:model} prevents such behavior in the bootstrap estimates obtained from the perturbed QR problem \eqref{eq:bootmin}. As \citet{kato2011} points out, a possible (although not particularly desirable) alternative would be to restrict the moments on the response variable.

%\item 
(iv)~Similar but somewhat simpler arguments can be used to prove analogues of Theorems \ref{th:clt}, \ref{th:bootclt}, and \ref{th:bootse} for the bootstrap method presented in \citet{parzenetal1994} for QR with independent data. For iid data, such analogues of Theorems \ref{th:clt} and \ref{th:bootclt} are essentially contained in the results of \citet{bellonietal2011} as special cases.
%\end{inparaenum}
\end{comments}

%\pub{\addlines}

I now turn to inference with the wild gradient bootstrap. Let $\tau \mapsto R(\tau)$ be a continuous, $(h\times d)$-matrix-valued function with $h\leq d$ and let $r\colon\Tau\to\mathbb{R}^d$. Suppose $R(\tau)$ has full rank for every $\tau\in\Tau$. I consider testing general pairs of hypotheses of the form
\begin{align*}
\mathrm{H_0}\colon R(\tau)\beta(\tau)=r(\tau)~\text{for all $\tau\in\Tau$}, \qquad \mathrm{H_1}\colon R(\tau)\beta(\tau)\neq r(\tau)~\text{for some $\tau\in\Tau$}.
\end{align*}
Many empirically relevant hypotheses can be tested with this framework. For example, a standard hypothesis in practice is that a single QR coefficient is zero for all $\tau\in\Tau$. If the coefficient of interest is the first entry of $\beta(\tau)$, then $R(\tau)\equiv (1,0,\dots,0)$ and $r(\tau)\equiv 0$.

For inference I use generalized Kolmogorov-Smirnov statistics. Cram\'er-von-Mises versions of these statistics can be used as well, but are not discussed here to conserve space. For a positive definite weight matrix function $\tau\mapsto\Omega(\tau)$ with positive square root $\Omega^{1/2}(\tau)$, define the test statistic
\begin{equation}\label{eq:ksstat}
%K_n(\Omega,T) = \sup_{\tau\in\Tau} n\bigl(R(\tau)\hat{\beta}_n(\tau)-r(\tau)\bigr)^\top \Omega^{-1}(\tau)\bigl(R(\tau)\hat{\beta}_n(\tau)-r(\tau)\bigr).
K_n(\Omega,T) = \sup_{\tau\in\Tau} \bigl\vert\Omega^{-1/2}(\tau)\sqrt{n}\bigl(R(\tau)\hat{\beta}_n(\tau)-r(\tau)\bigr)\bigr\vert.
\end{equation}
I focus on two versions of the statistic: (i)~an unweighted version with $\Omega(\tau)\equiv I_d$ and (ii)~a Wald-type statistic with $\Omega(\tau)$ equal to $$ \hat{\Omega}_n^*(\tau) := R(\tau)\hat{V}_n^*(\tau,\tau)R(\tau)^\top. $$ Other choices are clearly possible. For example, $\hat{V}_n^*(\tau,\tau)$ can be replaced by any other uniformly consistent estimate of $V(\tau,\tau)$. However, the Monte Carlo study in the next section suggests that option (ii) leads to tests with better finite-sample size and power than tests based on (i) or analytical estimates of $V(\tau,\tau)$.

\pub{\addlines}

In the absence of within-cluster correlation, the process inside the Euclidean norm in \eqref{eq:ksstat} with $\Omega = \hat{\Omega}_n^*$ would converge weakly to a standard vector Brownian bridge. Consequently, $\mathit{K}_n(\hat{\Omega}_n^*,T)$ would converge in distribution to the supremum of a standardized, tied-down Bessel process whose critical values can be simulated or computed exactly; see \citet{koenkermachado1999} for details. In the presence of data clusters, the limiting Gaussian process of the quantity inside the Euclidean norm is no longer a Brownian bridge for any choice of weight matrix. Both $\mathit{K}_n(\hat{\Omega}_n^*,T)$ and $K_n(I_d,\Tau)$ are then, in general, asymptotically non-pivotal statistics.
%The Wald-type weight tends to lead to better size and power in finite samples (see Experiment \ref{ex:KS} in the next section for Monte Carlo evidence), but the within-cluster dependence will generally render both statistics asymptotically non-pivotal. 
Bootstrap tests based on $\mathit{K}_n(\hat{\Omega}_n^*,T)$ therefore do not necessarily outperform tests based on $\mathit{K}_n(I_d,\Tau)$ because of asymptotic refinements; see, e.g., \citet{hall1992}. 
However, as I will show below, $\mathit{K}_n(\hat{\Omega}_n^*,T)$ still has the advantage that its square converges to a chi-square distribution if  $\Tau$ consists of only a single quantile.
%; $K_n(I_d,\Tau)$ is simpler to compute because no weight matrix has to be estimated and inverted.\marginpar{!}

The following algorithm describes how to conduct inference and how to test restrictions on the QR process uniformly over the entire set $\Tau$. This includes, for example, individual quantiles, finite sets of quantiles, closed intervals, and disjoint unions of closed intervals.
\begin{algorithm}[Wild gradient bootstrap inference]\label{al:binf}
\begin{compactenum}
\item\label{al:binf:1} Do Steps \ref{al:bse:1}-\ref{al:bse:3} of Algorithm \ref{al:bse}.

\item\label{al:binf:2} If $\Omega(\tau) = \hat{\Omega}_n^*(\tau)$, compute $\hat{V}_n^*(\tau,\tau)$ as the sample variance of the $m$ bootstrap observations of $\sqrt{n}(\hat{\beta}_n^*(\tau) - \hat{\beta}_n(\tau))$ from Step \ref{al:binf:1}.

\item For each of the $m$ bootstrap observations from Step \ref{al:binf:1}, calculate
\begin{equation}\label{eq:ksstarstat}
K^*_n(\Omega,T) := \sup_{\tau\in\Tau} \bigl\vert \Omega^{-1/2}(\tau)\sqrt{n}R(\tau)\bigl(\hat{\beta}^*_n(\tau)-\hat{\beta}_n(\tau)\bigr)\bigr\vert.
\end{equation}

\item Reject $\mathrm{H}_0$ in favor of $\mathrm{H}_1$ if $\mathit{K}_n(\Omega,\Tau)$ is larger than $q_{n,1-\alpha}(\Omega,\Tau)$, the $1-\alpha$ empirical quantile of the $m$ bootstrap statistics $\mathit{K}^*_n(\Omega,\Tau)$.
\end{compactenum}
\end{algorithm}

As before, I take the number of bootstrap simulations $m$ as large and view the bootstrap quantile $q=q_{n,1-\alpha}(\Omega,\Tau)$ directly as the minimizer of $$\ev^* \Bigl( \rho_{1-\alpha}\bigr(K^*_n(\Omega,T) - q\bigr) - \rho_{1-\alpha}\bigl(K^*_n(\Omega,T)\bigr)\Bigr).$$ Subtracting the second term here again ensures that this expression is necessarily finite without further conditions on the underlying variables.
%For fixed $q$, the expression inside the ($\prob^*$-outer) expectation in the preceding display into a bounded, Lipschitz continuous function of the bootstrap statistic $K^*_n(\hat{\Omega}_n^*,T)$. This works well in conjunction with the bounded Lipschitz metric introduced in Theorem \ref{th:bootclt}.

To prove consistency of Algorithm \ref{al:binf} for the Wald-type weight $\hat{\Omega}_n^*$, we also need to guarantee that $\hat{\Omega}_n^*$ is non-singular with probability approaching one as $n\to\infty$. This requires the eigenvalues of $\Sigma(\tau,\tau)$ in $V(\tau,\tau) = J^{-1}(\tau)\Sigma(\tau,\tau)J^{-1}(\tau)$ to be bounded away from zero, uniformly in $\tau\in\Tau$. In the absence of clusters, such a property would automatically follow from non-singularity of $n^{-1}\sum_{i=1}^n \ev X_{i1}X_{i1}^\top$. (Recall the discussion above Assumption \ref{as:gstat}.) In the cluster context, it is a separate restriction that rules out some scenarios where several clusters have similar forms of extreme within-cluster dependence.
\begin{assumption}\label{as:posdef}
For all non-zero $a\in\mathbb{R}^d$, $\inf_{\tau\in\Tau}a^\top\Sigma(\tau,\tau)a > 0$.
\end{assumption}

%[1,]    1    1    0    0    0
%[2,]    1    1    0    0    0
%[3,]    0    0    1    1    0
%[4,]    0    0    1    1    0
%[5,]    0    0    0    0    1
%
%[1,]    1    1    1
%[2,]    1   -1   -1
%[3,]    1    1    1
%[4,]    1   -1   -1
%[5,]    1    2    3

The next result shows that Algorithm \ref{al:binf} is indeed a consistent test of the null hypothesis $R(\tau)\beta(\tau)=r(\tau)$ for all $\tau\in\Tau$ against the alternative that $R(\tau)\beta(\tau)\neq r(\tau)$ for some $\tau\in\Tau$.% The theorem applies to both the Euclidean and the maximum norm versions of the statistic $\mathit{K}_n(\Omega,\Tau)$.
\begin{theorem}\label{c:bootse}
Suppose Assumptions \ref{as:data}-\ref{as:gstat} and \ref{as:posdef} hold. For $\alpha\in(0,1)$, we have
\begin{compactenum}[\upshape(i)]
\item under $\mathrm{H_0}$, $\prob(\mathit{K}_n(\hat{\Omega}_n^*,\Tau) > q_{n,1-\alpha}(\hat{\Omega}_n^*,\Tau))\to\alpha$ and
\item under $\mathrm{H_1}$, $\prob(\mathit{K}_n(\hat{\Omega}_n^*,\Tau) > q_{n,1-\alpha}(\hat{\Omega}_n^*,\Tau))\to 1$.
\end{compactenum}
Both results also hold without Assumption \ref{as:posdef} if $I_d$ is used instead of $\hat{\Omega}_n^*$ in all instances.
\end{theorem}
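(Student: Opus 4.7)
The plan is to derive both claims from the weak convergence statements in Theorems \ref{th:clt} and \ref{th:bootclt}, the uniform covariance consistency in Theorem \ref{th:bootse}\eqref{th:bootse_uni}, and a standard bootstrap quantile argument that converts conditional distributional convergence into convergence of empirical quantiles. Let $\Omega_0(\tau) := R(\tau)V(\tau,\tau)R(\tau)^\top$ denote the candidate limit of $\hat\Omega_n^*(\tau)$. Assumption \ref{as:posdef}, positive definiteness of $J(\tau)$ from Assumption \ref{as:smooth}, and the full rank of the continuous map $\tau\mapsto R(\tau)$ together guarantee that the eigenvalues of $\Omega_0(\tau)$ are bounded away from zero uniformly in $\tau\in\Tau$.

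For part (i) under $\mathrm{H}_0$, I would first rewrite
\[
K_n(\hat\Omega_n^*,\Tau) = \sup_{\tau\in\Tau}\bigl|\hat\Omega_n^{*-1/2}(\tau)\sqrt{n}R(\tau)\bigl(\hat\beta_n(\tau)-\beta(\tau)\bigr)\bigr|.
\]
Theorem \ref{th:bootse}\eqref{th:bootse_uni} combined with continuity of matrix inversion and of the matrix square root on the cone of uniformly positive definite matrices yields $\sup_{\tau\in\Tau}\|\hat\Omega_n^{*-1/2}(\tau)-\Omega_0^{-1/2}(\tau)\|\pto^{\prob} 0$. The continuous mapping theorem applied to Theorem \ref{th:clt} then gives $K_n(\hat\Omega_n^*,\Tau)\leadsto S := \sup_{\tau\in\Tau}|\Omega_0^{-1/2}(\tau)R(\tau)\mathbb{Z}(\tau)|$, and an analogous conditional continuous mapping argument applied to Theorem \ref{th:bootclt} shows that $K_n^*(\hat\Omega_n^*,\Tau)$ converges to $S$ conditional on the data, in probability. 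Since $R\mathbb{Z}$ is a non-degenerate centered Gaussian element of $\ell^\infty(\Tau)^d$ under Assumption \ref{as:posdef}, standard anti-concentration results for suprema of Gaussian processes imply that $S$ has a continuous distribution function on $(0,\infty)$. This continuity transfers the conditional bootstrap convergence into $q_{n,1-\alpha}(\hat\Omega_n^*,\Tau)\pto^{\prob} q_{1-\alpha}$, the deterministic $(1-\alpha)$ quantile of $S$, and a Slutsky-type argument delivers $\prob(K_n(\hat\Omega_n^*,\Tau)>q_{n,1-\alpha}(\hat\Omega_n^*,\Tau))\to\alpha$.

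For part (ii) under $\mathrm{H}_1$, fix any $\tau_0\in\Tau$ with $\delta:=|R(\tau_0)\beta(\tau_0)-r(\tau_0)|>0$ and observe that
\[
K_n(\hat\Omega_n^*,\Tau)\geq\bigl|\hat\Omega_n^{*-1/2}(\tau_0)\sqrt{n}\bigl(R(\tau_0)\hat\beta_n(\tau_0)-r(\tau_0)\bigr)\bigr|.
\]
Writing $\sqrt{n}(R(\tau_0)\hat\beta_n(\tau_0)-r(\tau_0)) = \sqrt{n}R(\tau_0)(\hat\beta_n(\tau_0)-\beta(\tau_0)) + \sqrt{n}(R(\tau_0)\beta(\tau_0)-r(\tau_0))$, the first summand is $O_{\prob}(1)$ by Theorem \ref{th:clt} while the second diverges at rate $\sqrt{n}\delta$. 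Because $\hat\Omega_n^{*-1/2}(\tau_0)$ is bounded below in operator norm in probability, $K_n(\hat\Omega_n^*,\Tau)\pto^{\prob}\infty$. The bootstrap statistic, however, is centered at $\hat\beta_n$ rather than $\beta$, so Theorems \ref{th:bootclt} and \ref{th:bootse}\eqref{th:bootse_uni} continue to yield $K_n^*(\hat\Omega_n^*,\Tau)\leadsto S$ conditional on the data under $\mathrm{H}_1$ as well; the quantile argument of part (i) then gives $q_{n,1-\alpha}(\hat\Omega_n^*,\Tau)=O_{\prob}(1)$, and the rejection probability tends to one.

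Finally, the unweighted case with $\Omega=I_d$ is handled by the identical proof with $\hat\Omega_n^{*-1/2}$ replaced by the constant $I_d$: the uniform convergence step becomes trivial and Assumption \ref{as:posdef} is no longer required to keep the weight invertible, while the non-degeneracy of $R\mathbb{Z}$ needed for continuity of the distribution of $\sup_{\tau\in\Tau}|R(\tau)\mathbb{Z}(\tau)|$ still follows from Assumption \ref{as:smooth} and the full rank of $R$. The principal obstacle I anticipate is making the continuity of the distribution of $S$ (and its unweighted analogue) fully rigorous in this non-iid cluster setting; once that is established, the remaining steps reduce to routine applications of the earlier theorems, the continuous mapping theorem, and standard bootstrap quantile results.
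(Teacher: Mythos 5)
Your overall architecture matches the paper's proof almost step for step: establish uniform positive definiteness of the limiting weight, convert Theorem \ref{th:bootse}\eqref{th:bootse_uni} into uniform convergence of $\hat{\Omega}_n^{*-1/2}$, apply the (conditional) continuous mapping theorem to Theorems \ref{th:clt} and \ref{th:bootclt}, pass from conditional weak convergence to convergence of bootstrap quantiles, and handle the alternative by noting that $K_n$ diverges while the bootstrap quantile stays bounded (your rate $\sqrt{n}\delta$ is the right one). The one place you stop short is exactly the step you flag yourself: the continuity of the distribution of $S=\sup_{\tau\in\Tau}|\Omega_0^{-1/2}(\tau)R(\tau)\mathbb{Z}(\tau)|$. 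This cannot be waved through with generic "anti-concentration," and it is not quite enough to get continuity alone: the quantile-convergence step also needs the distribution function of $S$ to be \emph{strictly increasing} in a neighborhood of its $(1-\alpha)$ quantile, since a flat stretch there would break the passage from conditional weak convergence to $q_{n,1-\alpha}\pto^{\prob}q_{1-\alpha}$.

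The paper closes this gap in two moves. First, because $x\mapsto\Vert x\Vert_\Tau$ is a continuous convex functional on $\ell^\infty(\Tau)$, Theorem 11.1 of \citet{davydovetal1998} gives that the distribution function of $K(\Omega_\infty,\Tau)$ is continuous and strictly increasing on $(q_0,\infty)$, where $q_0$ is the lower endpoint of its support. Second, one must show $q_0=0$: using equivalence of norms on $\mathbb{R}^d$, the event $\{K(\Omega_\infty,\Tau)<q\}$ contains an event involving the supremum of absolute values of mean-zero Gaussian variables, which by \citet{lifshits1982} has a continuous strictly increasing distribution function on $(0,\infty)$; hence every $q>0$ has positive mass below it and $q_0=0$. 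A further, more cosmetic difference: because the paper defines $q_{n,1-\alpha}$ as the minimizer of an expected check-function criterion (the $m\to\infty$ idealization), the quantile convergence is carried out via a convexity argument (Lemma 2 of \citet{hjortpollard1993}) applied to $q\mapsto\ev^*h_n(q)$ rather than by a direct inversion of distribution functions; your "standard bootstrap quantile argument" is serviceable only once the continuity and strict monotonicity above are in hand, so the Gaussian-functional step is genuinely load-bearing and not a routine afterthought.
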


\begin{comment}
The theorem in fact remains valid if the Euclidean norm in the definition of $\mathit{K}_n(\Omega,\Tau)$ in \eqref{eq:ksstat} is replaced by any other norm on $\mathbb{R}^d$ as long as the same norm is also employed in the bootstrap statistic $\mathit{K}^*_n(\Omega,\Tau)$ in \eqref{eq:ksstarstat}. A natural choice other than the Euclidean norm is the maximum norm $|x|_{\max} = \max\{|x_1|,\dots,|x_d|\}$, i.e., the maximum absolute entry of a vector $x = (x_1,\dots,x_d)$. I will use this norm below to construct bootstrap confidence bands for the QR coefficient functions. 
\end{comment}

I now discuss three useful corollaries of Theorems \ref{th:bootse} and \ref{c:bootse} regarding (i)~chi-square inference with the bootstrap covariance matrix, (ii)~bootstrap confidence bands, and (iii)~computation of the supremum in the Kolmogorov-Smirnov statistics. First, if $\Tau$ consists of only a single quantile $\tau_0$, then the square of $K_n(\hat{\Omega}_n^*,\tau_0)$ is simply the ordinary Wald statistic 
$$n\bigl(R(\tau_0)\hat{\beta}_n(\tau_0)-r(\tau_0)\bigr)^\top \hat{\Omega}_n^{*-1}(\tau_0)\bigl(R(\tau_0)\hat{\beta}_n(\tau_0)-r(\tau_0)\bigr).$$ 
Because $\sqrt{n}(R(\tau_0)\hat{\beta}_n(\tau_0)-r(\tau_0))$ is asymptotically multivariate normal under the null hypothesis and $\hat{\Omega}_n^*(\tau_0)$ is consistent for the variance of that multivariate normal distribution, the statistic in the preceding display has an asymptotic chi-square distribution. Hence, chi-square critical values can be used instead of bootstrap critical values for the test decision. The following corollary makes this precise.
\begin{corollary}\label{c:bootpw}
Suppose we are in the situation of Theorem \ref{c:bootse} with $\Tau = \{\tau_0\}$ for some $\tau_0\in(0,1)$. Then
\begin{compactenum}[\upshape(i)]
\item under $\mathrm{H_0}$, $K_n(\hat{\Omega}_n^*,\tau_0)^2 \leadsto \chi^2_{\rank R(\tau_0)}$ and
\item under $\mathrm{H_1}$, $\prob(\mathit{K}_n(\hat{\Omega}_n^*,\tau_0)^2 > q)\to 1$ for every $q\in\mathbb{R}$.
\end{compactenum}
\end{corollary}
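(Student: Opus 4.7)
The plan is to derive the chi-square limit directly from Theorems \ref{th:clt} and \ref{th:bootse}\eqref{th:bootse_pw} via Slutsky's theorem and the continuous mapping theorem, bypassing the bootstrap-quantile machinery needed for Theorem \ref{c:bootse} because at a single quantile the limiting distribution is already pivotal once one standardizes by a consistent variance estimate.

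First, under $\mathrm{H}_0$, Theorem \ref{th:clt} gives $\sqrt{n}(\hat{\beta}_n(\tau_0)-\beta(\tau_0))\leadsto \mathbb{Z}(\tau_0)\sim N(0,V(\tau_0,\tau_0))$, so the continuous linear map $x\mapsto R(\tau_0)x$ yields $\sqrt{n}(R(\tau_0)\hat{\beta}_n(\tau_0)-r(\tau_0))\leadsto N(0,\Omega(\tau_0))$ with $\Omega(\tau_0):=R(\tau_0)V(\tau_0,\tau_0)R(\tau_0)^\top$. By Assumption \ref{as:posdef}, $\Sigma(\tau_0,\tau_0)$ is positive definite; combined with positive definiteness of $J(\tau_0)$ from Assumption \ref{as:smooth}, this makes $V(\tau_0,\tau_0)$ positive definite, and since $R(\tau_0)$ has full row rank $h=\rank R(\tau_0)$, the $h\times h$ matrix $\Omega(\tau_0)$ is positive definite as well.

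Second, Theorem \ref{th:bootse}\eqref{th:bootse_pw} gives $\hat{V}_n^*(\tau_0,\tau_0)\pto^{\prob} V(\tau_0,\tau_0)$, hence $\hat{\Omega}_n^*(\tau_0)\pto^{\prob}\Omega(\tau_0)$. Since $\Omega(\tau_0)$ is positive definite, continuity of matrix inversion and of the symmetric square root at positive definite matrices yields $\hat{\Omega}_n^{*-1/2}(\tau_0)\pto^{\prob}\Omega^{-1/2}(\tau_0)$. Slutsky's theorem then produces
\begin{equation*}
\hat{\Omega}_n^{*-1/2}(\tau_0)\sqrt{n}\bigl(R(\tau_0)\hat{\beta}_n(\tau_0)-r(\tau_0)\bigr)\leadsto \Omega^{-1/2}(\tau_0)\,N\bigl(0,\Omega(\tau_0)\bigr)=N(0,I_h),
\end{equation*}
and the continuous mapping theorem applied to $x\mapsto |x|^2$ gives $K_n(\hat{\Omega}_n^*,\tau_0)^2\leadsto|N(0,I_h)|^2=\chi^2_{\rank R(\tau_0)}$, which is part (i).

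For part (ii), under $\mathrm{H}_1$ the vector $\delta:=R(\tau_0)\beta(\tau_0)-r(\tau_0)$ is non-zero. Decompose
\begin{equation*}
\sqrt{n}\bigl(R(\tau_0)\hat{\beta}_n(\tau_0)-r(\tau_0)\bigr)=\sqrt{n}R(\tau_0)\bigl(\hat{\beta}_n(\tau_0)-\beta(\tau_0)\bigr)+\sqrt{n}\,\delta.
\end{equation*}
The first term is $O_{\prob}(1)$ by Theorem \ref{th:clt} while $|\sqrt{n}\,\delta|\to\infty$, and since $\hat{\Omega}_n^{*-1/2}(\tau_0)$ converges in probability to the non-singular $\Omega^{-1/2}(\tau_0)$, the triangle inequality gives $K_n(\hat{\Omega}_n^*,\tau_0)\pto^{\prob}\infty$, so $\prob(K_n(\hat{\Omega}_n^*,\tau_0)^2>q)\to 1$ for every $q\in\mathbb{R}$. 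The only delicate point — and thus the main (mild) obstacle — is verifying that $\Omega(\tau_0)$ is invertible so that Slutsky applies to the standardized statistic; this is where Assumption \ref{as:posdef}, the positive definiteness of $J(\tau_0)$, and the full-rank assumption on $R(\tau_0)$ all enter together. Everything else is routine given the results already established.
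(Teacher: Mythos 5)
Your proof is correct and follows essentially the same route the paper takes: the paper justifies this corollary only by the informal argument in the text immediately preceding it (asymptotic normality of $\sqrt{n}(R(\tau_0)\hat{\beta}_n(\tau_0)-r(\tau_0))$ from Theorem \ref{th:clt} plus consistency of $\hat{\Omega}_n^*(\tau_0)$ from Theorem \ref{th:bootse} make the Wald statistic asymptotically chi-square), and your write-up simply fills in the Slutsky/continuous-mapping details and the positive-definiteness of $\Omega(\tau_0)$ via Assumption \ref{as:posdef}. No gaps.
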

\begin{comments}
\begin{inparaenum}[(i)]
\item From this result it also follows immediately that a single QR coefficient at a single quantile can be studentized with its bootstrap standard error and compared to a standard normal critical value.

\phantomsection%
\label{rev:smallsample}%
\changes{\item The Monte Carlo study below suggests that asymptotic inference using the bootstrap covariance matrix generally performs well and is only slightly worse in terms of finite-sample size than bootstrapping both the covariance matrix and the critical values. Still, when there are very few clusters, asymptotic inference with bootstrap standard errors tends to over-reject while simultaneously having significantly lower power than the test with bootstrap critical values. The over-rejection could, in principle, be avoided by replacing standard normal and chi-square critical values with larger critical values from the Student $t_{n-1}$ and similarly scaled $F$ distributions \citep{donaldlang2007, besteretal2011}. However, such small-sample adjustments would decrease the power of the test even further. It is therefore recommended to bootstrap the critical values when only few clusters are available.} %As the Monte Carlo study shows, similar issues occur for certain bootstrap weight distributions when $\tau_0$ is near $0$ or $1$. The problem with this approach in the present context is that the power of tests based on standard normal and chi-square critical values is at its lowest when the number of clusters is small, the within-cluster correlation is high, or $\tau_0$ is near $0$ or $1$. Enlarging the critical values would make these issues worse. It is therefore recommended to also bootstrap the critical values in such circumstances.
\end{inparaenum}
\end{comments}

%Let $\tau \mapsto \mathbb{X}_h(\tau)$ be a standard $h$-dimensional Brownian bridge so that $\tau \mapsto \mathbb{Q}_h(\tau) := |\mathbb{X}_h(\tau)|/\sqrt{\tau(1-\tau)}$ is a standardized tied-down Bessel process of order $h$. Hence, for every fixed $\tau\in\Tau$, $\mathbb{Q}^2_h(\tau)$ has a chi-square distribution with $h$ degrees of freedom.

%\begin{corollary}\label{c:bootseinid}
%In the situation of Theorem \ref{th:bootse} and $c_i\equiv 1$, we have
%\begin{compactenum}[\upshape(i)]
%\item under $\mathrm{H_0}$, $\mathit{K}_n(\Tau) \leadsto \sup_{\tau\in\Tau} \mathbb{Q}^2_h(\tau)$ and
%\item under $\mathrm{H_1}$, $\lim_{n\to\infty}\prob(\mathit{K}_n(\Tau) > c)= 1$ for every $c\in\mathbb{R}$.
%\end{compactenum}
%\end{corollary}

Next, the results in Theorems \ref{th:bootse} and \ref{c:bootse} allow for the construction of bootstrap confidence bands (uniform in $\tau\in\Tau$) for the QR coefficient function. These bands can be computed jointly for the entire $d$-dimensional function or only a subset $\Delta\subset\{1,\dots,d\}$ of coefficients. As before, a positive definite weight matrix function, denoted here by $\tau\mapsto\Lambda(\tau)$, can be specified to improve the finite-sample performance. An obvious choice is $\Lambda(\tau) = \hat{V}_n^*(\tau,\tau)$. In the following algorithm and in the corollary immediately below, I write $a_j$ for the $j$th entry of a $d$-vector $a$ and $A_{jj}$ for the $j$th diagonal element of a $d\times d$ square matrix $A$.

\begin{algorithm}[Wild gradient bootstrap confidence bands]\label{al:bcbands}
\begin{compactenum}
\item Do Steps \ref{al:bse:1}-\ref{al:bse:3} of Algorithm \ref{al:bse} and, if $\Lambda(\tau) = \hat{V}_n^*(\tau,\tau)$, compute $\hat{V}_n^*(\tau,\tau)$ as in Step \ref{al:binf:2} of Algorithm \ref{al:binf}.

\item For each of the $m$ bootstrap observations, calculate
\begin{equation*}
K^*_n(\Lambda,\Tau,\Delta) := \sup_{\tau\in\Tau} \max_{j\in\Delta} \biggl\vert\frac{\hat{\beta}^*_n(\tau)_j-\hat{\beta}_n(\tau)_j}{\sqrt{\Lambda(\tau)_{jj}/n}}\biggr\vert
\end{equation*}
and $q_{n,1-\alpha}(\Lambda,\Tau,\Delta)$, the $1-\alpha$ empirical quantile of $K^*_n(\Lambda,\Tau,\Delta)$.

\item For each $\tau\in\Tau$ and $j\in\Delta$, compute the interval $$\Bigl[\hat{\beta}_n(\tau)_j \pm q_{n,1-\alpha}(\Lambda,\Tau,\Delta)\sqrt{\smash{\Lambda(\tau)_{jj}}/n} \Bigr].$$
\end{compactenum}
\end{algorithm}
The confidence band given in the last step of the algorithm has asymptotic coverage probability $1-\alpha$. The proof of this result is based on the fact that, as long as the maximum norm is used in \eqref{eq:ksstarstat} instead of the Euclidean norm, $K^*_n(\Lambda,\Tau,\Delta)$ is nothing but the bootstrap statistic $K^*_n(\Omega,\Tau)$ with a diagonal weight matrix and a matrix of restrictions $R(\tau)\equiv R$ that selects the coefficients given in $\Delta$. %For example, if $\Delta = \{1,3\}$ then the first and third diagonal element of $R$ would be equal to one and all other elements would be zero.
\begin{corollary}\label{c:bootband}
Suppose we are in the situation of Theorem \ref{c:bootse}. For every $\Delta\subset\{1,\dots,d\}$ 
$$ \prob\biggl(\beta(\tau)_j \in \biggl[\hat{\beta}_n(\tau)_j \pm q_{n,1-\alpha}(\hat{V}_n^*,\Tau,\Delta)\sqrt{\hat{V}_n^*(\tau,\tau)_{jj}/n} \biggr]~\text{for all $\tau\in\Tau$, all $j\in\Delta$}\biggr) $$ converges to $1-\alpha$ as $n\to\infty$. This continues to hold without Assumption \ref{as:posdef} if all instances of $\hat{V}_n^*$ are replaced by $I_d$.
\end{corollary}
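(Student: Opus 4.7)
The plan is to recast the simultaneous coverage statement as a null-hypothesis rejection probability for the Kolmogorov-Smirnov procedure of Theorem \ref{c:bootse}, with the Euclidean norm replaced by the maximum norm and with a specific choice of restriction matrix and weight function.

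First, I would rewrite the event inside the probability in the corollary in an equivalent form. The containment $\beta(\tau)_j \in [\hat{\beta}_n(\tau)_j \pm q_{n,1-\alpha}(\hat{V}_n^*,\Tau,\Delta)\sqrt{\hat{V}_n^*(\tau,\tau)_{jj}/n}]$ holding simultaneously for all $\tau\in\Tau$ and all $j\in\Delta$ is the same as
\[
\sup_{\tau\in\Tau}\max_{j\in\Delta}\biggl\vert\frac{\sqrt{n}\bigl(\hat{\beta}_n(\tau)_j - \beta(\tau)_j\bigr)}{\sqrt{\smash{\hat{V}_n^*(\tau,\tau)_{jj}}}}\biggr\vert \leq q_{n,1-\alpha}\bigl(\hat{V}_n^*,\Tau,\Delta\bigr),
\]
so the corollary reduces to showing that the probability of the reverse strict inequality converges to $\alpha$.

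Next, I would identify the left-hand side of the display as a Kolmogorov-Smirnov statistic of the kind studied in Theorem \ref{c:bootse}. Let $R$ be the $|\Delta|\times d$ selection matrix whose rows are the standard basis vectors $e_j^\top$ for $j\in\Delta$, set $r(\tau) := R\beta(\tau)$, and define the diagonal weight $\Omega(\tau) := \diag(\hat{V}_n^*(\tau,\tau)_{jj} : j\in\Delta)$. Then $\Omega^{-1/2}(\tau)\sqrt{n}(R\hat{\beta}_n(\tau)-r(\tau))$ has $j$-th entry $\sqrt{n}(\hat{\beta}_n(\tau)_j - \beta(\tau)_j)/\sqrt{\hat{V}_n^*(\tau,\tau)_{jj}}$, and the display above is exactly the maximum-norm version of $K_n(\Omega,\Tau)$ from \eqref{eq:ksstat}. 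The bootstrap analogue $K^*_n(\hat{V}_n^*,\Tau,\Delta)$ in Algorithm \ref{al:bcbands} similarly coincides with the maximum-norm version of $K^*_n(\Omega,\Tau)$ from \eqref{eq:ksstarstat}, so $q_{n,1-\alpha}(\hat{V}_n^*,\Tau,\Delta)$ is the associated bootstrap critical value. The null hypothesis $R\beta(\tau) = r(\tau)$ holds identically by the definition of $r$.

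Finally, I would verify the hypotheses of Theorem \ref{c:bootse} for these choices: $R$ has full row rank by construction; the map $\tau\mapsto\Omega(\tau)$ is uniformly consistent for $\diag(V(\tau,\tau)_{jj} : j\in\Delta)$ by Theorem \ref{th:bootse}\eqref{th:bootse_uni}; and Assumptions \ref{as:smooth} and \ref{as:posdef} keep the eigenvalues of $V(\tau,\tau) = J^{-1}(\tau)\Sigma(\tau,\tau)J^{-1}(\tau)$, and therefore its diagonal entries, uniformly bounded away from zero on $\Tau$. The maximum-norm extension of Theorem \ref{c:bootse} recorded in the remark immediately following that theorem then yields $\prob(K_n > q_{n,1-\alpha}) \to \alpha$ under the (trivially satisfied) null, i.e., the coverage probability converges to $1-\alpha$. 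The unweighted variant is handled by replacing $\hat{V}_n^*$ with $I_d$ throughout, which makes $\Omega$ an identity and removes the need for Assumption \ref{as:posdef}. The only conceptual step is the reduction to a standardized sup-max statistic with a selection matrix and diagonal weight; the remaining work is bookkeeping, and I do not expect any serious obstacle beyond keeping the max-norm substitution careful.
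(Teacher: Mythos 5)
Your proposal is correct and follows essentially the same route as the paper's own proof: recast the simultaneous coverage event as $\{K_n(\diag\hat{V}_n^*,\Tau)\leq q_{n,1-\alpha}\}$ for a max-norm Kolmogorov--Smirnov statistic with a selection matrix $R$ and diagonal weight, then invoke the max-norm version of Theorem \ref{c:bootse} after checking that the diagonal weight inherits uniform consistency (via $|\diag A-\diag B|\leq|A-B|$ and Theorem \ref{th:bootse}) and uniform positive definiteness from $V(\tau,\tau)$. No gaps.
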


Finally, if $\Tau$ is not a finite set, computing $K_n(\Omega,\Tau)$ and the confidence bands is generally infeasible in practice due to the presence of a supremum in their definitions. This can be circumvented by replacing the supremum with a maximum over a finite grid $\Tau_n\subset\Tau$ that becomes finer as the sample size increases. For example, if $\Tau$ is a closed interval, we can take $\Tau_n = \{ j/n : j=0,1,\dots,n \}\cap \Tau$. For any $\tau$ in the interior of $\Tau$ and $n$ large enough, we can then find $\tau_n,\tau_n'\in\Tau_n$ that differ by $1/n$ and satisfy $\tau_n\leq \tau < \tau_n'$. This gives $0\leq \tau - \tau_n < 1/n$. Furthermore, the endpoints of $\Tau_n$ are less than $1/n$ away from the respective endpoints of $\Tau$. Hence, every $\tau\in\Tau$ is the limit of a sequence $\tau_n\in\Tau_n$. This turns out to be the property needed to ensure that the approximation of $\Tau$ by a finite set has no influence on the asymptotic behavior of the bootstrap test.
\begin{corollary}\label{c:bootgrid}
Suppose we are in the situation of Theorem \ref{c:bootse} and there exist sets $\Tau_n\subset\Tau$ such that for every $\tau\in\Tau$ there is a sequence $\tau_n\in\Tau_n$ such that $\tau_n\to\tau$ as $n\to\infty$. Then Theorem \ref{c:bootse} and Corollary \ref{c:bootband} continue to hold when $\Tau_n$ is used instead of $\Tau$.
\end{corollary}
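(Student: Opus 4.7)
The plan is to show that under the stated density condition, $\Tau_n$ is eventually $\delta$-dense in $\Tau$ for every $\delta>0$, and that this, combined with the asymptotic equicontinuity of the empirical and bootstrap QR processes, makes $\mathit K_n(\Omega,\Tau_n)$, $\mathit K_n^*(\Omega,\Tau_n)$, and the bootstrap quantile $q_{n,1-\alpha}(\Omega,\Tau_n)$ differ from their $\Tau$-counterparts by $o_{\prob}(1)$. Size, power, and coverage then carry over directly from Theorem \ref{c:bootse} and Corollary \ref{c:bootband}.

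First I would upgrade the pointwise density hypothesis to uniform density. Fix $\delta>0$. For each $\tau\in\Tau$ the assumption supplies an index $n_\tau$ such that for $n\geq n_\tau$ some point of $\Tau_n$ lies within $\delta/2$ of $\tau$. The open cover $\{B(\tau,\delta/2):\tau\in\Tau\}$ of the compact set $\Tau$ admits a finite subcover $\{B(\tau_j,\delta/2)\}_{j=1}^N$, and with $n_0:=\max_j n_{\tau_j}$ every $\tau\in\Tau$ lies within $\delta$ of $\Tau_n$ whenever $n\geq n_0$.

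Next I would invoke Theorems \ref{th:clt} and \ref{th:bootclt}, which deliver weak convergence of both $\tau\mapsto\sqrt n(\hat\beta_n(\tau)-\beta(\tau))$ and $\tau\mapsto\sqrt n(\hat\beta_n^*(\tau)-\hat\beta_n(\tau))$ in $\ell^\infty(\Tau)^d$ to the same continuous-path Gaussian process $\mathbb Z$ (continuity of paths follows from that of the covariance function $V$, which in turn is inherited from the uniform convergence of $J_n$ and $\Sigma_n$ noted below Assumption \ref{as:gstat}). Asymptotic tightness on the compact $\Tau$ is equivalent to asymptotic equicontinuity with respect to the Euclidean metric, so for every $\epsilon,\eta>0$ there is $\delta>0$ with
$$\prob\Bigl(\sup_{|\tau-\tau'|<\delta,\,\tau,\tau'\in\Tau}\bigl|\sqrt n\bigl(\hat\beta_n(\tau)-\beta(\tau)\bigr)-\sqrt n\bigl(\hat\beta_n(\tau')-\beta(\tau')\bigr)\bigr|>\epsilon\Bigr)<\eta$$
for all large $n$, and an analogous conditional-on-data bound for the bootstrap process. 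Combined with the continuity of $R,r$ and the uniform consistency of $\hat V_n^*$ from Theorem \ref{th:bootse}(ii), the $\delta$-density obtained in the previous step gives $|\mathit K_n(\Omega,\Tau)-\mathit K_n(\Omega,\Tau_n)|=o_{\prob}(1)$ and the analogous bound for $|\mathit K_n^*(\Omega,\Tau)-\mathit K_n^*(\Omega,\Tau_n)|$, whence $q_{n,1-\alpha}(\Omega,\Tau_n)=q_{n,1-\alpha}(\Omega,\Tau)+o_{\prob}(1)$. Under $\mathrm H_1$, continuity of $R,r,\beta$ makes $\{\tau:R(\tau)\beta(\tau)\neq r(\tau)\}$ an open subset of $\Tau$, which $\Tau_n$ intersects for all large $n$, so $\mathit K_n(\Omega,\Tau_n)\to\infty$ in probability. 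The same argument, applied with the maximum norm and the diagonal weights $\Lambda=\hat V_n^*$, handles Corollary \ref{c:bootband}.

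The main obstacle will be verifying conditional asymptotic equicontinuity of $\tau\mapsto\sqrt n(\hat\beta_n^*(\tau)-\hat\beta_n(\tau))$ in a form strong enough to transfer to bootstrap quantiles. But this is already packaged in the proof of Theorem \ref{th:bootclt}, which establishes conditional weak convergence of the bootstrap process in $\ell^\infty(\Tau)^d$, together with the tail bounds on the bootstrap gradient process developed for Theorem \ref{th:bootse}; everything else reduces to the compactness-plus-equicontinuity approximation described above.
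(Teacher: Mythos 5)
Your proposal is correct and rests on the same mechanism as the paper's proof: density of $\Tau_n$ in $\Tau$ combined with (stochastic) equicontinuity lets you swap the supremum over $\Tau$ for the supremum over $\Tau_n$ at an $o_{\prob}(1)$ cost, after which everything reduces to Theorem \ref{c:bootse}. The differences are in execution. Where you upgrade pointwise density to uniform $\delta$-density by a finite subcover and run an explicit equicontinuity argument in the Euclidean metric, the paper works in the pseudometric $\varrho_\Tau$ (by Lemma \ref{l:sequi}\eqref{l:sequi:totbound}, $\tau_n\to\tau$ implies $\varrho_\Tau(\tau_n,\tau)\to 0$) and invokes a packaged result (Exercise 7.5.5 of Kosorok, 2008, together with the extended continuous mapping theorem) to conclude $K_n(\hat{\Omega}_n^*,\Tau_n)\leadsto K(\Omega_\infty,\Tau)$ directly. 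The more substantive divergence concerns the bootstrap statistic: the ``main obstacle'' you identify---conditional asymptotic equicontinuity of $\sqrt{n}(\hat{\beta}_n^*-\hat{\beta}_n)$ in a form strong enough to transfer to bootstrap quantiles---is precisely what the paper sidesteps. It shows that $K^*_n(\hat{\Omega}_n^*,\Tau_n)$ and $K^*_n(\hat{\Omega}_n^*,\Tau)$ converge \emph{jointly and unconditionally} to the same limit, so their difference is $o_{\prob}(1)$ without any appeal to the conditional law; closeness of the conditional quantiles then follows from the Hjort--Pollard argmin argument already embedded in the proof of Theorem \ref{c:bootse}. Your route via the conditional weak convergence of Theorem \ref{th:bootclt} is viable, but you would still have to pass from closeness of the bootstrap statistics to closeness of their conditional quantiles using the continuity and strict monotonicity of the limiting distribution function established in that proof, so the unconditional joint-convergence trick is the cleaner way to close that step.
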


The next section illustrates the finite-sample behavior of the wild gradient bootstrap in a brief Monte Carlo exercise. Section \ref{s:star} then provides an application of the wild gradient bootstrap to Project STAR data.

\section{Monte Carlo Experiments}\label{s:mc}
This section presents several Monte Carlo experiments to investigate the small-sample properties of the wild gradient bootstrap in comparison to other methods of inference. I discuss significance tests at a single quantile (Experiment \ref{ex:signi}), inference about the QR coefficient function (Experiment \ref{ex:KS}), and confidence bands (Experiment \ref{ex:bands}).

%\pub{\addlines[2]}

The data generating process (DGP) for the following experiments is
\begin{align*}
Y_{ik} = 0.1 U_{ik} + X_{ik} + X_{ik}^2 U_{ik},
\end{align*}
where $X_{ik} = \sqrt{\varrho}Z_{i} + \sqrt{1-\varrho}\varepsilon_{ik}$ with $\varrho \in [0, 1)$;  $Z_i$ and $\varepsilon_{ik}$ are standard normal, independent of each other, and independent across their indices. This guarantees that the $X_{ik}$ are standard normal and, within each cluster, any two observations $X_{ik}$ and $X_{il}$ have a correlation coefficient of $\varrho$.  The $U_{ik}$ are distributed as $\mathrm{N}(0,1/3)$ and drawn independently of $X_{ik}$ to ensure that the $X_{ik}^2 U_{ik}$ have mean zero and variance one. The correlation structure of $U_{ik}$ is chosen such that the within-cluster correlation coefficient of $X_{ik}^2 U_{ik}$ is also approximately $\varrho$.\footnote{By construction, the correlation coefficient of $X_{ik}^2 U_{ik}$ and $X_{il}^2 U_{il}$ is $\corr(U_{ik}, U_{il})(2\varrho^2+1)/3$. I generate data such that $\corr(U_{ik}, U_{il}) = \min\{1, 3\varrho/(2\varrho^2 + 1)\}$. The within-cluster correlation coefficient of $X_{ik}^2 U_{ik}$ is then exactly $\varrho$ for $\varrho \in [0, 0.5]$ and has a value slightly below $\varrho$ for $\varrho\in(0.5,1)$. This choice for $\corr(U_{ik}, U_{il})$ ensures that the other restrictions on the DGP hold for all values of $\varrho$ used in the experiments.} Both $X_{ik}$ and $U_{ik}$ are independent across clusters. 

The DGP in the preceding display corresponds to the quadratic QR model
\begin{align}\label{eq:mc}
Q_{ik}(\tau\mid X_{ik}) = \beta_0(\tau) + \beta_1(\tau) X_{ik} + \beta_2(\tau)X_{ik}^2
\end{align}
with $\beta_0(\tau) = \Phi^{-1}(\tau)/\sqrt{300}$, $\beta_1(\tau) \equiv 1$, and $\beta_2(\tau) = \Phi^{-1}(\tau)/\sqrt{3}$, where $\Phi$ is the standard normal distribution function. I denote the QR estimates of the two slope parameters $\beta_1(\tau)$ and $\beta_2(\tau)$ by $\hat{\beta}_{1,n}(\tau)$ and $\hat{\beta}_{2,n}(\tau)$. Their bootstrap versions are $\hat{\beta}^*_{1,n}(\tau)$ and $\hat{\beta}^*_{2,n}(\tau)$. As before, I refer to the square roots of the diagonal elements of $\smash{\hat{V}_n^*}(\tau, \tau)/n$ as bootstrap standard errors and, for simplicity, now denote the bootstrap standard error of $\hat{\beta}_{2,n}(\tau)$ by $\se^*(\hat{\beta}^*_{2,n}(\tau))$.

In the following experiments, I consider inference about $\tau\mapsto \beta_1(\tau)$ and $\tau\mapsto \beta_2(\tau)$ for different values of the number of clusters $n$, the within-cluster correlation $\varrho$, and the variance of the cluster size $\var(c_i)$. In all experiments below, the smallest possible cluster size is $5$ and $c_i$ is distributed uniformly on $\{5, 6, \dots, c_{\max}\}$. Unless otherwise noted, the bootstrap weights are drawn from the Mammen distribution as defined in the remarks below Algorithm \ref{al:bse}.
\begin{experiment}[Significance tests at a single quantile]\label{ex:signi}
This Monte Carlo experiment illustrates the small-sample size and power of different methods for testing whether a single QR coefficient equals zero at a given quantile. To test the correct null hypothesis $\beta_2(.5) = 0$ in \eqref{eq:mc} against the alternative $\beta_2(.5) \neq 0$, I consider (i) wild gradient bootstrap inference as in Algorithm \ref{al:binf}, (ii) standard inference with bootstrap standard errors as in Corollary \ref{c:bootpw}, (iii) cluster-robust inference based on analytically estimating the standard errors, (iv) standard inference without cluster correction, (v) cluster-robust Rao score inference, and (vi) wild bootstrap inference without cluster correction.

For (i), note that $R\equiv (0, 0, 1)$ and $r\equiv 0$. Hence, Algorithm \ref{al:binf} is equivalent to testing whether $|\hat{\beta}_{2,n}(.5)|$ exceeds the empirical $1-\alpha$ quantile of the $m$ observations of $|\hat{\beta}^*_{2,n}(.5)-\smash{\hat{\beta}_{2,n}}(.5)|$ conditional on $\smash{\hat{\beta}_{2,n}}(.5)$. No weight matrix is needed because the test decision is independent of $\Omega(\tau)$ whenever $R(\tau)V(\tau,\tau)R(\tau)^\top$ is a scalar. Similarly, for (ii), the test decision in Corollary \ref{c:bootpw} is equivalent to simply comparing $|\hat{\beta}_{2,n}(.5)|/\se^*(\hat{\beta}^*_{2,n}(.5))$ to $\Phi^{-1}(1-\alpha/2)$. For (iii), I obtain standard errors by estimating $V(\tau,\tau) = J^{-1}(\tau)\Sigma(\tau,\tau)J^{-1}(\tau)$ analytically as suggested by \citet{parentesantossilva2013}. They propose the plug-in estimate 
%\begin{equation}
$$%\tilde{\Sigma}_n(\tau, \tau) = 
\frac{1}{n} \sum_{i=1}^n \sum_{k=1}^{c_i}\sum_{l=1}^{c_i}\psi_\tau\bigl(Y_{ik}-X_{ik}^\top\hat{\beta}_n(\tau)\bigr) \psi_{\tau}\bigl(Y_{il}-X_{il}^\top\hat{\beta}_n(\tau)\bigr)X_{ik}X_{il}^\top$$
%\end{equation}
for $\Sigma(\tau,\tau)$ and replace $J(\tau)$ by a \citet{powell1986} kernel estimate. The kernel estimate requires a bandwidth choice. The results here are based on the standard implementation in the \texttt{quantreg} package in \texttt{R} with the Hall-Sheather rule; see \citet[pp.\ 80-81]{koenker2005} and \citet{koenker2013}.\footnote{This bandwidth choice required a robust estimate of scale. \citet{koenker2013} uses the minimum of the standard deviation of the QR residuals and their normalized interquartile range. \citet{parentesantossilva2013} suggest the median absolute deviation of the QR residuals with a scaling constant of $1$. I chose \citeauthor{koenker2013}'s implementation because it yielded better results in nearly all cases.} For (iv), I use the regular version of the \citeauthor{powell1986} sandwich estimator described in \citet{koenker2005}. It employs the same kernel estimate of $J(\tau)$ as in (iii), but replaces $\Sigma(\tau,\tau)$ by $n^{-1} \tau(1-\tau) \sum_{i=1}^n \sum_{k=1}^{c_i}X_{ik}X_{ik}^\top$ and is therefore not designed to account for within-cluster correlation. For (v), I apply the $\mathrm{QRS}_0$ test of \citet{wanghe2007}, a cluster-robust version of the QR rank score test \citep{gutenbrunneretal1993}. \citeauthor{wanghe2007} derive their test statistic under homoscedasticity assumptions; the DGP considered here is highly heteroscedastic. For (vi), I compute critical values from the \texttt{quantreg} implementation of the \citeauthor{fengetal2010} (\citeyear{fengetal2010}, FHH hereafter) wild bootstrap for QR models. Their method perturbs the QR residuals via a carefully chosen weight distribution but presumes independent observations. An alternative wild bootstrap procedure due to \citet{davidson2012} had size properties similar to those of the FHH method but had lower power in nearly all of my experiments; results for this bootstrap are therefore omitted.

\begin{figure}[t]
\centering
\includegraphics[width=0.9\textwidth]{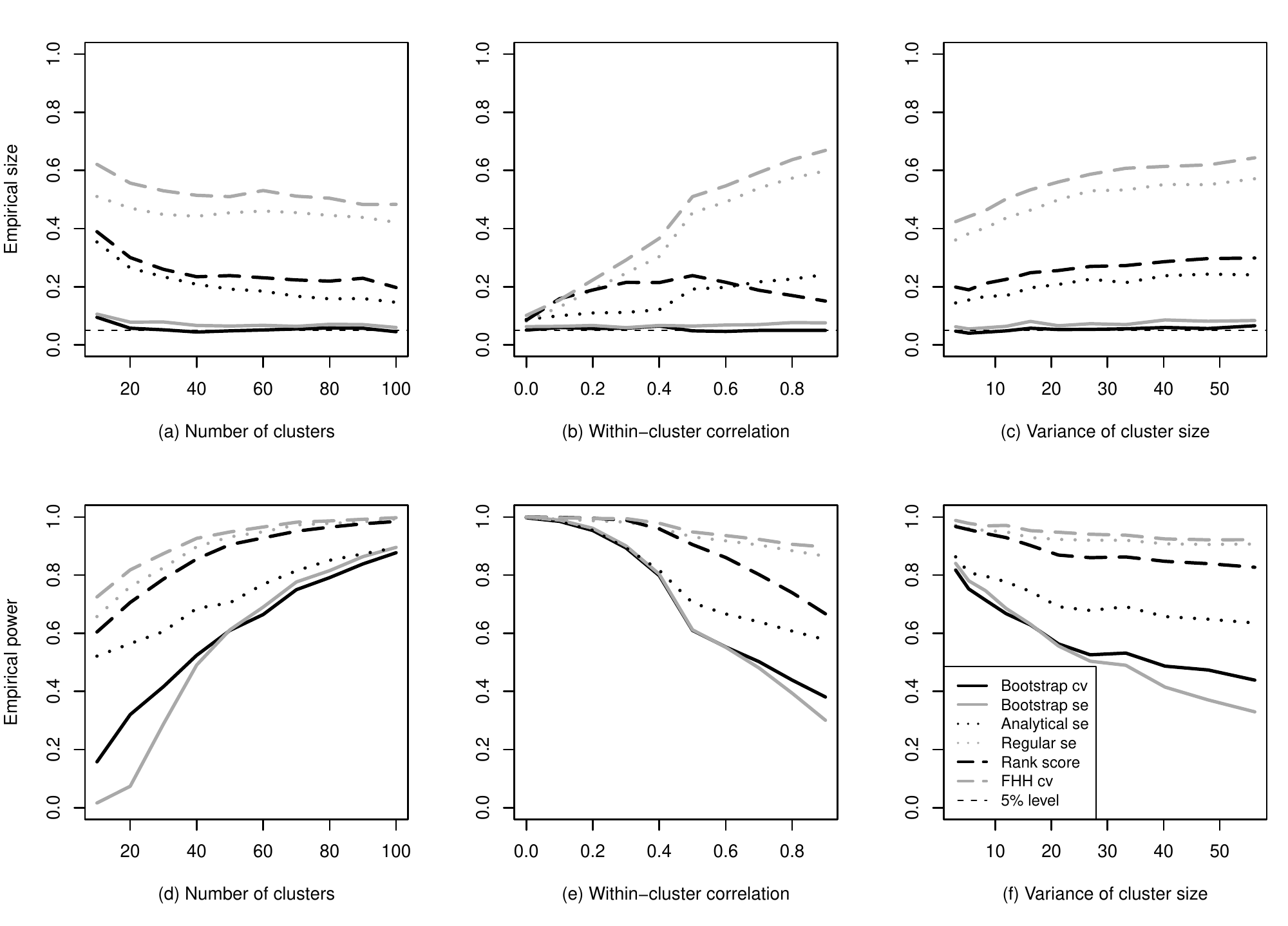}
\caption{Empirical rejection frequencies of a correct hypothesis $\mathrm{H}_0\colon \beta_2(.5) = 0$ (panels (a)-(c)) and the incorrect hypothesis $\mathrm{H}_0\colon \beta_2(.75) = 0$ (panels (d)-(f)) using wild gradient bootstrap critical values (solid black lines), bootstrap standard errors (solid grey), analytical cluster-robust standard errors (dotted black), regular standard errors without cluster correction (dotted grey), cluster-robust rank score inference (long-dashed black), and FHH wild bootstrap without cluster correction (long-dashed grey) at the 5\% level (short-dashed) as a function of the (a) number of clusters, (b) within-cluster correlation, and (c) maximal cluster size.} \label{fig:mc_ex11}
\end{figure}

Panels (a)-(c) in Figure \ref{fig:mc_ex11} show empirical rejection frequencies of a correct hypothesis $\mathrm{H}_0\colon \beta_2(.5) = 0$ for methods (i)-(vi) at the 5\% level (short-dashed line) as a function of (a) the number of clusters $n$, (b) the within-cluster correlation $\varrho$, and (c) the variance of the cluster size $\var(c_i)$. Each horizontal coordinate was computed from 2,000 simulations and all six methods were faced with the same data. The three bootstrap tests used $m=299$ bootstrap repetitions. The wild gradient bootstrap had Mammen weights. Results for other weight distributions are discussed below. 

For panel (a), I set $\varrho = .5$, $\var(c_i) = 10$ (i.e., $c_{\max} = 15$), and considered $n\in\{10, 20, \dots, 100\}$. As can be seen, the wild gradient bootstrap critical values (solid black lines) and bootstrap standard errors (solid grey) provided tests that were essentially at the nominal level with as few as 20 clusters, with the bootstrap critical values performing slightly better. Tests based on analytical cluster-robust standard errors (dotted black) and cluster-robust rank scores (long-dashed black) over-rejected significantly, although this property became less pronounced for larger numbers of clusters. Regular standard errors without cluster correction (dotted grey) and wild bootstrap without cluster correction (long-dashed grey) led to severe over-rejection in all cases. For (b), I chose $n = 50$, $\var(c_i) = 10$, and varied $\varrho\in\{0, .1, \dots, .9\}$. At $\varrho = 0$, all tests except the rank score test apply and had almost correct size. For larger values of the within-cluster correlation, the three analytical tests and the FHH bootstrap test over-rejected considerably, although the rank score test improved for very high correlations. The test based on $\se^*(\hat{\beta}^*_{2,n}(\tau))$ over-rejected mildly. The bootstrap test given in Algorithm \ref{al:binf} was nearly at the nominal level in most cases. For (c), I fixed $\varrho = .5$ and changed $c_{\max}\in\{9, 11, \dots, 29\}$ so that $\var(c_i)$ increased from $2$ to $52$ over this range. I simultaneously decreased $n$ in order to keep the average total number of observations constant at approximately 250; this resulted in numbers of clusters between 36 and 15. The test based on the bootstrap standard error again over-rejected slightly but far less than the ones based on the analytical cluster-robust standard error and the cluster-robust rank score. Wild gradient bootstrap critical values again provided a test with almost proper size, while regular standard errors and the wild bootstrap for independent observations were not useful at any value of $\var(c_i)$.

Panels (d)-(f) show empirical rejection frequencies of the incorrect hypothesis $\mathrm{H}_0\colon \beta_2(.75) = 0$ for the same data.  Rejection frequencies of the three analytical methods and the FHH wild bootstrap are only reported for completeness and, because of their size distortion, should not be interpreted as estimates of their power. The wild gradient bootstrap critical values tended to lead to a more powerful test than inference with bootstrap standard errors. This was, in particular, the case in small samples, at high within-cluster correlations, and for large variances of the cluster size. The rejection frequencies of all tests were increasing in the number of clusters, decreasing in the within-cluster correlations, and decreasing in the variance of the cluster size.

\phantomsection%
\label{rev:differentrho}%
Following \citet{mackinnonwebb2014}, I also experimented (not shown) with cases where I varied the within-cluster correlation of $X$ and $U$ in the Monte Carlo DGP independently. For the wild gradient bootstrap, I found that for any within-cluster correlation of $X$, the degree of correlation in $U$ had little impact, whereas increases in the within-cluster correlation in $X$ led to mild size distortions similar to the ones found in Figure \ref{fig:mc_ex11}. In contrast, increases in the within-cluster correlation of $U$ led to severe over-rejection in tests based on analytical cluster-robust standard errors; higher correlation in $X$ also induced over-rejection, but the impact was considerably less pronounced.

In light of the findings so far it should be noted that the small-sample results for the analytically estimated standard errors reported here do not contradict the ones reported by \citet{parentesantossilva2013}, who find a much better performance of their method in terms of finite-sample size. In comparison to their experiments, I consider data with smaller numbers of clusters, different correlation structures, and much stronger cluster heterogeneity in terms of cluster sizes. Computing the standard errors analytically worked well when the number of clusters was large, the within-cluster correlation was low, and the clusters were small. 
\phantomsection%
\label{rev:wanghedging}%
Similarly, the rank score test of \citet{wanghe2007} is designed for homoscedastic models and performed much better in such settings. For heteroscedastic models, \citet{wang2009} shows that reweighting their test statistic can significantly improve inference when more is known about the specific form of heteroscedasticity; her reweighting schemes do not apply to the DGP in the present example and are therefore not discussed.

\begin{table}[thp]
\caption{Empirical size and power as in Figure \ref{fig:mc_ex11} for different bootstrap weights}\label{tab:weights}
\centering
{\small
\resizebox{\columnwidth}{!}{%
\begin{tabular}{cp{0cm}ccp{0cm}ccp{0cm}ccp{0cm}ccp{0cm}ccp{0cm}cc}%
\hline
&
& \multicolumn{8}{c}{$\mathrm{H}_0\colon \beta_2(.5) = 0$ (size)}
&
& \multicolumn{8}{c}{$\mathrm{H}_0\colon \beta_2(.75) = 0$ (power)}
\\
\cline{3-10}\cline{12-19}
& 
& \multicolumn{2}{c}{Mammen}
&
& \multicolumn{2}{c}{Rademacher}
&
& \multicolumn{2}{c}{Webb}
& 
& \multicolumn{2}{c}{Mammen}
&
& \multicolumn{2}{c}{Rademacher}
&
& \multicolumn{2}{c}{Webb}
\\ 
\cline{3-4}\cline{6-7}\cline{9-10}\cline{12-13}\cline{15-16}\cline{18-19}
$n$
&
& cv & se 
&
& cv & se 
&
& cv & se 
&
& cv & se 
&
& cv & se
&
& cv & se \\ 
\hline
 10 & & .098 & .114 & & .131 & .166 & & .128 & .146 & & .155 & .019 & & .104 & .016 & & .094 & .011 \\ 
 20 & & .068 & .088 & & .076 & .102 & & .071 & .098 & & .328 & .086 & & .302 & .036 & & .270 & .026 \\ 
100 & & .054 & .068 & & .059 & .070 & & .054 & .070 & & .876 & .896 & & .864 & .886 & & .868 & .890 \\ 
\rule{0pt}{3ex}$\varrho$ \\
\hline
.1  & & .061 & .069 & & .063 & .071 & & .062 & .068 & & .998 & 1    & & .998 & 1    & & .999 & 1 \\ 
.5  & & .055 & .071 & & .059 & .074 & & .061 & .076 & & .602 & .613 & & .590 & .544 & & .589 & .502 \\ 
.9  & & .057 & .078 & & .067 & .088 & & .065 & .091 & & .378 & .308 & & .376 & .183 & & .371 & .156 \\ 
\rule{0pt}{3ex}$\var(c_i)$ \\
\hline
 2  & & .056 & .070 & & .057 & .072 & & .056 & .072 & & .820 & .840 & & .808 & .830 & & .803 & .831 \\ 
24  & & .054 & .076 & & .062 & .078 & & .060 & .074 & & .580 & .578 & & .578 & .479 & & .570 & .446 \\ 
52  & & .056 & .082 & & .062 & .085 & & .066 & .086 & & .456 & .364 & & .459 & .235 & & .446 & .183 \\ 
\hline
\end{tabular}%
}}
\end{table}

%As a referee points out, the choice of bootstrap weight distribution is important in many applications. This is also true in the present case. 
Bootstrap weight distributions other than the Mammen distribution are often found to work well in regression settings. 
\phantomsection%
\label{rev:bootweights}%
These include the standard normal distribution, the recentered $\mathrm{Exponential}(1)$ distribution, the Rademacher distribution, which takes on the values $-1$ and $1$ with equal probability, and the \citet{webb2013} 6-point distribution, which takes on $-\sqrt{1.5}, -1, -\sqrt{0.5}, \sqrt{0.5}, 1,$ and $\sqrt{1.5}$ with equal probability. In my experiments, the standard normal had size properties very similar to those of the Rademacher and Webb distributions, but lower power. I therefore do not present detailed results for this distribution. The same holds for the recentered $\mathrm{Exponential}(1)$, which behaved almost like the Mammen distribution in terms of size, but also had lower power. Comparisons of the other distributions are shown in Table \ref{tab:weights}. The experimental setup and data were the same as in Figure \ref{fig:mc_ex11}. The left-hand side of the table measures finite-sample size for different numbers of clusters, within-cluster correlations, and variances of the cluster size as in panels (a), (b), and (c) of Figure \ref{fig:mc_ex11}; the right-hand side corresponds to the power estimates in panels (d)-(f). As can be seen, the Mammen distribution had slightly better size and power, in particular when the number of clusters was small, the within-cluster correlation was high, and the variance of the cluster size was large.

\begin{figure}[t]
\centering
\includegraphics[width=.55\textwidth]{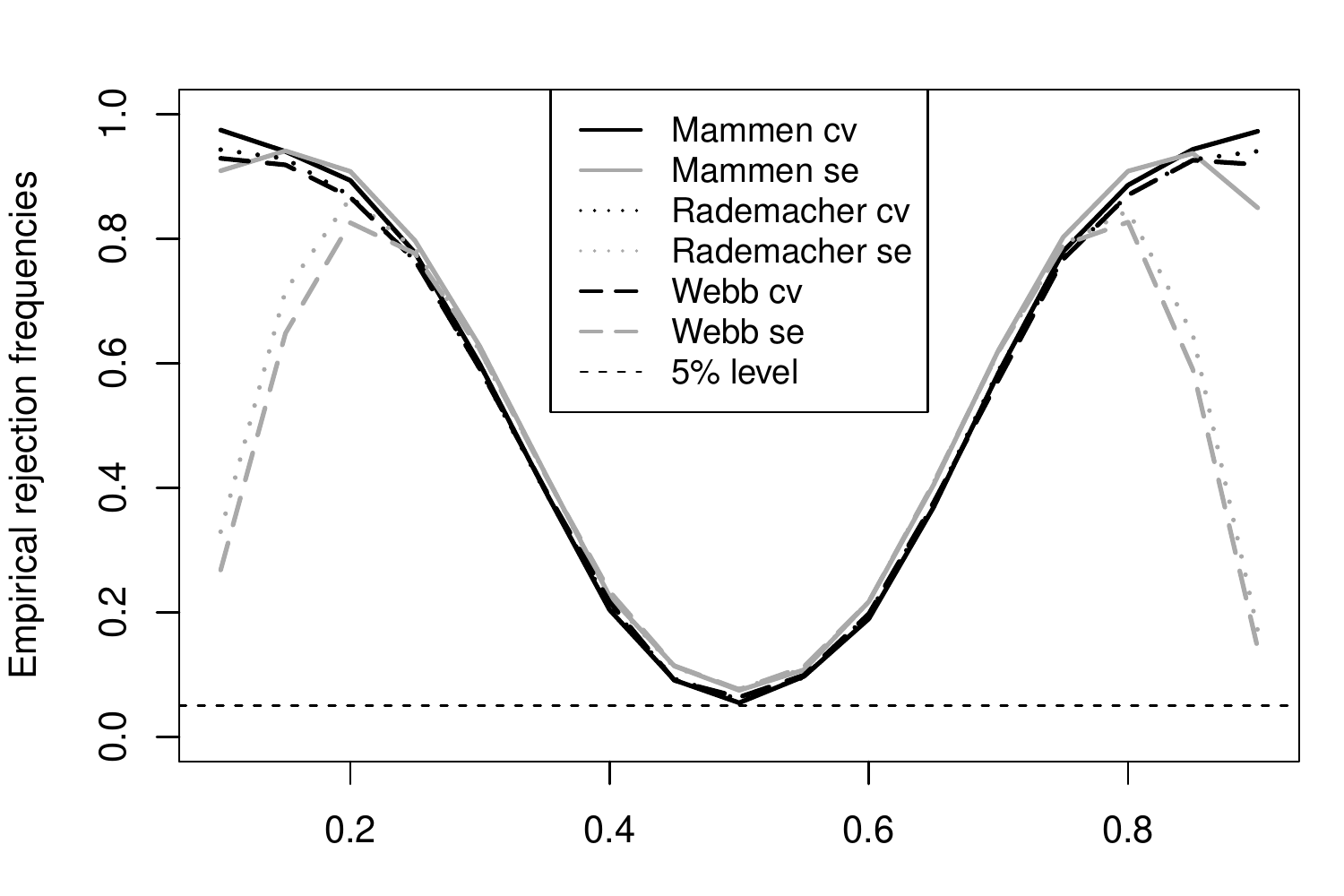}
\caption{Rejection frequencies of $\mathrm{H}_0\colon \beta_2(\tau) = 0$ for different values of $\tau$ using the same methods as in Table \ref{tab:weights}. $\mathrm{H}_0$ is only true at $\tau = .5$.}
\label{fig:mc_ex12}
\end{figure}

%\pub{\addlines}

To further investigate finite-sample power of the three bootstrap weight distributions, I plot in Figure~\ref{fig:mc_ex12} their rejection frequencies of $\mathrm{H}_0\colon\beta_2(\tau) = 0$ at 17 separate quantile indices $\tau\in\{.1, .15, \dots, .9\}$ for $n=75$. I again chose $\varrho=.5$, $\var(c_i) = 10$, $m=299$, and 2,000 Monte Carlo simulations. For this experiment all 17 possible null hypotheses were tested with the same data. Only $\mathrm{H}_0\colon\beta_2(.5)=0$ is true. As the plot shows, wild gradient bootstrap critical values led to tests with good size and power at all quantiles and for all weight distribution. Size and power of the tests based on bootstrap standard errors were similar for $\tau\in[.2,.8]$. However, for quantile indices outside this interval the tests with bootstrap standard errors from the Rademacher (dotted grey) and Webb (long-dashed grey) distributions showed a sharp decline in power; the Mammen distribution (solid grey) did not have this issue. I experimented with the parameters of the DGP and found that the power of the Rademacher and Webb distributions for small and large $\tau$ increased quickly when I increased the number of clusters or decreased the within-cluster correlation. For example, for the Rademacher distribution the rejection frequency at $\tau=.1$ and $.9$ was about 80\% when I either set $\varrho$ to $.3$ or $n$ to $100$. 

\phantomsection%
\label{rev:skewdiscussion}%
The reason for the large differences in finite-sample power between the distributions appears to be the extreme skewness of the distribution of the summands in the gradient process for small and large $\tau$. The asymmetry in the Mammen distribution seems to mimic this property particularly well. The standard errors also improved when I used a recentered $\mathrm{Exponential}(1)$ or other asymmetric distributions, but the Mammen distribution provided the best results. \hfill$\square$
\end{experiment}

\begin{experiment}[Uniform inference on the QR process]\label{ex:KS}
This experiment illustrates the finite-sample performance of Algorithm \ref{al:binf} for inference on the entire QR process. I tested the true hypothesis $\mathrm{H}_0\colon \beta_1(\tau) = 1$ for all $\tau\in\Tau$ and the false hypothesis $\mathrm{H}_0\colon \beta_1(\tau)= 0$ for all $\tau\in\Tau$ at the 5\% level. I chose $\var(c_i)=10$, $m=199$ bootstrap simulations with the Mammen distribution, and, in view of Corollary \ref{c:bootgrid}, I approximated $\Tau = [.1, .9]$ by $\{.1, .2, \dots, .9\}$. The test statistics  $K_n(\Omega, \Tau)$ were either (i)~weighted by the bootstrap estimate $\hat{\Omega}_n^*$, (ii)~weighted by the analytical estimate of $\tau\mapsto R(\tau)V(\tau,\tau)R(\tau)^\top$ described in the preceding Monte Carlo exercise, or (iii)~unweighted ($\Omega = I$). All three methods were faced with the same data. %The performance of the algorithm was similar to that of the bootstrap critical value test described in Experiment \ref{ex:signi}. I therefore went with two fairly extreme scenarios in which $n=25$ and $c_{\max} \in \{ 15, 50 \}$ to explore the limits of the wild gradient bootstrap. In particular for $c_{\max} = 50$, the data is far from the underlying assumptions and the number of clusters is much smaller than the sample size.

\begin{table}[thp]
\caption{Empirical size of Algorithm \ref{al:binf} at the 5\% level}\label{tab:ks}
\centering
{\small
\begin{tabular}{cp{0cm}cccp{0cm}cccp{0cm}ccc}%
\hline
& 
& \multicolumn{3}{c}{$n=20$}
&
& \multicolumn{3}{c}{$n=30$}
& 
& \multicolumn{3}{c}{$n=50$}
\\ 
\cline{3-5}\cline{7-9}\cline{11-13}
$\varrho$
&
& boot. & ana. & unw. 
&
& boot. & ana. & unw. 
&
& boot. & ana. & unw.  
\\ 
\hline
 0 & & .029 & .026 & .012 & & .034 & .014 & .011 & & .038 & .026 & .012 \\ 
.1 & & .030 & .022 & .004 & & .038 & .029 & .013 & & .029 & .029 & .011 \\ 
.2 & & .020 & .016 & .006 & & .025 & .026 & .010 & & .035 & .034 & .015 \\ 
.3 & & .020 & .017 & .002 & & .032 & .028 & .009 & & .042 & .039 & .017 \\ 
.4 & & .010 & .006 & .001 & & .030 & .026 & .009 & & .048 & .039 & .011 \\ 
.5 & & .008 & .001 & .000 & & .016 & .011 & .001 & & .036 & .025 & .006 \\ 
.6 & & .005 & .000 & .000 & & .015 & .006 & .000 & & .041 & .029 & .004 \\ 
.7 & & .011 & .001 & .000 & & .015 & .010 & .000 & & .039 & .030 & .001 \\ 
.8 & & .012 & .000 & .000 & & .008 & .005 & .000 & & .037 & .027 & .003 \\ 
.9 & & .012 & .000 & .000 & & .006 & .002 & .000 & & .032 & .020 & .002 \\  
\hline
\end{tabular}%
}
\end{table}

%\pub{\addlines}

Table \ref{tab:ks} reports the empirical rejection frequencies of the true null hypothesis for methods (i)-(iii) from an experiment with 1,000 Monte Carlo simulations for each $\varrho\in\{0, .1, \dots, .9\}$ and each $n\in\{20, 30, 50\}$. At $n = 20$, the test based on the bootstrapped Wald weight (``boot.'')\ was quite conservative for all degrees of within-cluster correlation but not overly so for $\varrho$ smaller than $.4$. The performance of the bootstrap test with analytical weights (``ana.'')\ was slightly worse, especially for higher within-cluster correlations. However, both of these tests under-rejected considerably less for larger $n$ so that at $n=50$ the size of bootstrap-weighted test was above .3 for all but one $\varrho$. In contrast, the unweighted version was very conservative for all within-cluster correlations and all numbers of clusters.

\begin{table}[thp]
\caption{Empirical power of Algorithm \ref{al:binf} at the 5\% level}\label{tab:kspow}
\centering
{\small
\begin{tabular}{cp{0cm}cccp{0cm}cccp{0cm}ccc}%
\hline
& 
& \multicolumn{3}{c}{$n=10$}
&
& \multicolumn{3}{c}{$n=15$}
& 
& \multicolumn{3}{c}{$n=20$}
\\ 
\cline{3-5}\cline{7-9}\cline{11-13}
$\varrho$
&
& boot. & ana. & unw. 
&
& boot. & ana. & unw. 
&
& boot. & ana. & unw.  
\\ 
\hline
 0 & & 1    & .735 & .692 & & 1    & .986 & .983 & & 1    & 1     & 1     \\ 
.1 & & .994 & .431 & .373 & & 1    & .924 & .904 & & 1    & .994 & .991 \\ 
.2 & & .944 & .173 & .133 & & .993 & .704 & .665 & & 1    & .962 & .945 \\ 
.3 & & .828 & .043 & .027 & & .964 & .352 & .320 & & .999 & .787 & .747 \\ 
.4 & & .665 & .013 & .001 & & .849 & .085 & .062 & & .963 & .417 & .362 \\ 
.5 & & .438 & .007 & .000 & & .594 & .005 & .001 & & .790 & .063 & .041 \\ 
.6 & & .412 & .003 & .000 & & .531 & .004 & .000 & & .703 & .034 & .016 \\ 
.7 & & .409 & .006 & .000 & & .474 & .001 & .000 & & .624 & .024 & .008 \\ 
.8 & & .388 & .006 & .000 & & .423 & .003 & .001 & & .527 & .014 & .002 \\ 
.9 & & .380 & .006 & .001 & & .338 & .006 & .001 & & .428 & .014 & .001 \\  
\hline
\end{tabular}%
}
\end{table}

%The conservative behavior did not lead to low power for the bootstrap-weighted Wald test. 
Table \ref{tab:kspow} shows empirical rejection frequencies of the false null hypothesis $\mathrm{H}_0\colon \beta_1(\tau)= 0$ for all $\tau\in\Tau$ at $n\in\{10, 15, 20\}$ in the same experimental setup as above.  For $n=10$, the Wald test with bootstrap weights had substantial power even for high within-cluster correlations. In sharp contrast, the unweighted and analytically weighted tests rejected considerably fewer false null hypotheses and exhibited a total loss of power starting from about $\varrho = .5$. Increases in the number of clusters translated into significant gains in the power of all tests, but the test based on the bootstrap weight matrix far outperformed the other two tests at all sample sizes. %All tests rejected increasingly fewer false null hypotheses for higher within-cluster correlations.
%Both tests had good power for small values of $\varrho$, but rejected increasingly fewer false null hypotheses for higher within-cluster correlations. The unweighted test exhibited a total loss of power starting from about $\varrho = .5$. At $c_{\max}=50$, the increased sample size translated into significant increases in the power of the two bootstrap tests. However, the Wald test now over-rejected quite severely for low within-cluster correlations but still performed well when $\varrho$ was larger than $.3$. The unweighted test consistently under-rejected, but this behavior was less pronounced.
%
%I also experimented with the analytical estimate of $R(\tau)V(\tau,\tau)R(\tau)^\top$ described in the preceding Monte Carlo exercise (not shown in the table). The test was conducted as in Algorithm \ref{al:binf}, but only the QR coefficient functions were bootstrapped. For $c_{\max}=15$, the size varied between $.068$ (at $\varrho = .1$) and $.002$ ($\varrho = .8$); power was between $.970$ ($\varrho = 0$) and $.003$ ($\varrho = .5$). For $c_{\max}=50$, the size ranged from $.102$ ($\varrho = 0$) to $.002$ ($\varrho = .5$); power was between $1$ ($\varrho = 0$) and $.002$ ($\varrho = .5$).
\hfill $\square$
\end{experiment}

\begin{experiment}[Confidence bands]\label{ex:bands}
%The performance of the algorithm was similar to that of the bootstrap critical value test described in Experiment \ref{ex:signi}. I therefore went with two fairly extreme scenarios in which $n=25$ and $c_{\max} \in \{ 15, 50 \}$ to explore the limits of the wild gradient bootstrap. In particular for $c_{\max} = 50$, the data is far from the underlying assumptions and the number of clusters is much smaller than the sample size.
In this experiment I investigate the finite-sample properties of Algorithm \ref{al:bcbands}. The setup is as in the preceding experiment. The empirical coverage of $\tau\mapsto \beta_1(\tau)$ with a 95\% wild gradient bootstrap confidence band is, by construction, identical to $1$ minus the empirical size of the bootstrap test in Table \ref{tab:ks} and therefore not shown here. I instead consider a more complex scenario where I report the empirical coverage of a joint 95\% confidence band for the two slope functions $\tau\mapsto \beta_1(\tau)$ and $\tau\mapsto \beta_2(\tau)$ for $n \in \{ 10, 15 , 20\}$  and $\varrho\in\{0, .1, \dots, .9\}$. %This is a more challenging scenario than Experiment \ref{ex:KS} because the function of interest $\tau\mapsto (\beta_1, \beta_2)(\tau)$ varies considerably with the quantile index $\tau$. 
Table \ref{tab:cb} contains the results.

\begin{table}[htp]
\caption{Empirical coverage of $\tau\mapsto (\beta_1, \beta_2)(\tau)$ by 95\% confidence band} \label{tab:cb}
\centering
{\small
\begin{tabular}{cp{0cm}cccp{0cm}cccp{0cm}ccc}%
\hline
& 
& \multicolumn{3}{c}{$n=10$}
&
& \multicolumn{3}{c}{$n=15$}
& 
& \multicolumn{3}{c}{$n=20$}
\\ 
\cline{3-5}\cline{7-9}\cline{11-13}
$\varrho$
&
& boot. & ana. & unw. 
&
& boot. & ana. & unw. 
&
& boot. & ana. & unw.  
\\ 
\hline
 0 & & .939 & .953 & .993 & & .938 & .949 & .993 & & .945 & .958 & .995 \\ 
.1 & & .949 & .977 & .997 & & .923 & .938 & .992 & & .938 & .942 & .992 \\ 
.2 & & .955 & .981 & .991 & & .920 & .946 & .988 & & .930 & .934 & .990 \\ 
.3 & & .960 & .994 & .997 & & .945 & .975 & .993 & & .934 & .947 & .994 \\ 
.4 & & .954 & .998 & 1    & & .966 & .993 & 1    & & .948 & .972 & .997 \\ 
.5 & & .962 & 1    & 1    & & .981 & .999 & 1    & & .974 & .998 & 1    \\ 
.6 & & .950 & .999 & 1    & & .958 & 1    & 1    & & .972 & .998 & 1    \\ 
.7 & & .937 & .999 & 1    & & .960 & .998 & 1    & & .969 & 1    & 1    \\ 
.8 & & .919 & .999 & 1    & & .949 & .997 & .999 & & .965 & .998 & 1    \\ 
.9 & & .900 & .998 & 1    & & .941 & .997 & .999 & & .960 & .996 & 1    \\
\hline
\end{tabular}%
}
\end{table}
As before, the procedure based on the bootstrapped Wald weight showed the most balanced performance with confidence bands that were close to 95\% in most cases. The only exceptions occurred at $n=10$ for very high within-cluster correlations, where the confidence bands were too thin. The unweighted confidence bands were consistently too wide. For analytical weights, the empirical coverage was near 95\% for small $\varrho$. However, at values of $\varrho$ larger than $.4$ the coverage was essentially 100\% even for $n=20$. Further increases in $n$ (not shown) yielded improvements for all versions of the confidence band but even the bootstrap-weighted confidence band needed a large number of clusters for the coverage to be fully balanced across $\varrho$. \hfill $\square$
\end{experiment}

In summary, the wild gradient bootstrap performs well even in fairly extreme (but empirically relevant) situations where the number of clusters is small, the within-cluster correlation is high, and the clusters are very heterogeneous. Here, reweighting the test statistic by the bootstrap covariance matrix is crucial for tests to have good size and power in finite samples. Analytical weights or no weights can be used when the number of clusters is large; otherwise they tend to lead to tests that are less reliable than those based on the bootstrapped Wald weight. For inference at a single quantile, testing with bootstrap standard errors and normal/chi-square critical values provides a simpler alternative to testing with bootstrap critical values that is, with some exceptions, nearly as good. These findings are also confirmed by an additional experiment in the next section, where I implement placebo interventions in the Project STAR data. 

\section{Application: Project STAR}\label{s:star}
This section applies the wild gradient bootstrap to investigate the effects of a class size reduction experiment on the conditional quantiles of student performance on a standardized test. The data come from the first year of the Tennessee \emph{Student/Teacher Achievement Ratio} experiment, known as Project STAR.

I start by briefly describing Project STAR; the discussion closely follows \citetalias{wordetal1990} and \citet{graham2008}, where more details can be found. At the beginning of the 1985-1986 school year, incoming kindergarten students who enrolled in one of the 79 project schools in Tennessee were randomly assigned to one of three class types within their school: a small class (13-17 students), a regular-size class (22-25 students), or a regular-size class (22-25 students) with a full-time teacher's aide. Teachers were then randomly assigned to one of these class types. Each of the project schools was required to have at least one of each kindergarten class type. During the 1985-1986 school year, a total of 6,325 students in 325 different classrooms across Tennessee participated in the project. Classroom identifiers are not available, but \citeauthor{graham2008}'s (\citeyear{graham2008}) matching algorithm is able to uniquely identify 317 of these classrooms in the data. 5,727 students in these classrooms have the complete set of characteristics available that I use in the QR model below. I restrict the analysis to only these kindergarten students.

The outcome of interest is student performance on the \emph{Stanford Achievement Test} in mathematics and reading administered at the end of the 1985-1986 school year. I standardized the raw test scores as in \citet{krueger1999}: First, I computed the empirical distribution functions of the math and reading scores for the pooled sample of regular (with and without teacher's aide) students. Next, I transformed the math and reading scores for students in all three class types into percentiles using the math and reading empirical distribution functions, respectively, obtained in the first step. Finally, to summarize overall performance, I computed the average of the two percentiles for each student. I use this percentile score as the dependent variable in the following analysis. The idea behind \citeauthor{krueger1999}'s normalization is that in the absence of a class size effect, the transformed subject scores for both small and regular class types would have an approximately uniform distribution. 

%The dependent variable in the following is the percentile score variable $\mathit{pscore}$.

The two main covariates of interest are the treatment dummy $\mathit{small}$ indicating whether the student was assigned to a small class and the treatment dummy $\mathit{regaide}$ indicating whether the student was assigned to a regular class with an aide. I consider the following model for the conditional quantiles of the transformed scores:
\begin{align}\label{eq:star1}
Q_{ik}(\tau\mid X_{ik}) = \beta_0(\tau) + \beta_1(\tau) \mathit{small}_{ik} + \beta_2(\tau) \mathit{regaide}_{ik} + \beta_3(\tau)^\top Z_{ik}. 
\end{align}
This specification is similar to the mean regression given in \citeauthor{krueger1999}'s (\citeyear{krueger1999}) Table V.4. The covariate vector $Z_{ik}$ contains a dummy indicating if the student identifies as $\mathit{black}$,\footnote{The sample also contains a large number of students who identify as white and a very small number of students who identify as Hispanic, Asian, American Indian, or other.} a student gender dummy, a dummy indicating whether the student is $\mathit{poor}$ as measured by their access to free school lunch, a dummy indicating if the teacher identifies as black ($\mathit{tblack}$, the other teachers in the sample identify as white), the teacher's years of teaching experience ($\mathit{texp}$), a dummy indicating whether the teacher has at least a master's degree ($\mathit{tmasters}$), and additive school ``fixed effects.'' Because of possible peer effects and unobserved teacher characteristics, I cluster at the classroom level. 

\begin{figure}[htp]
\begin{center}
\includegraphics[width=0.9\textwidth]{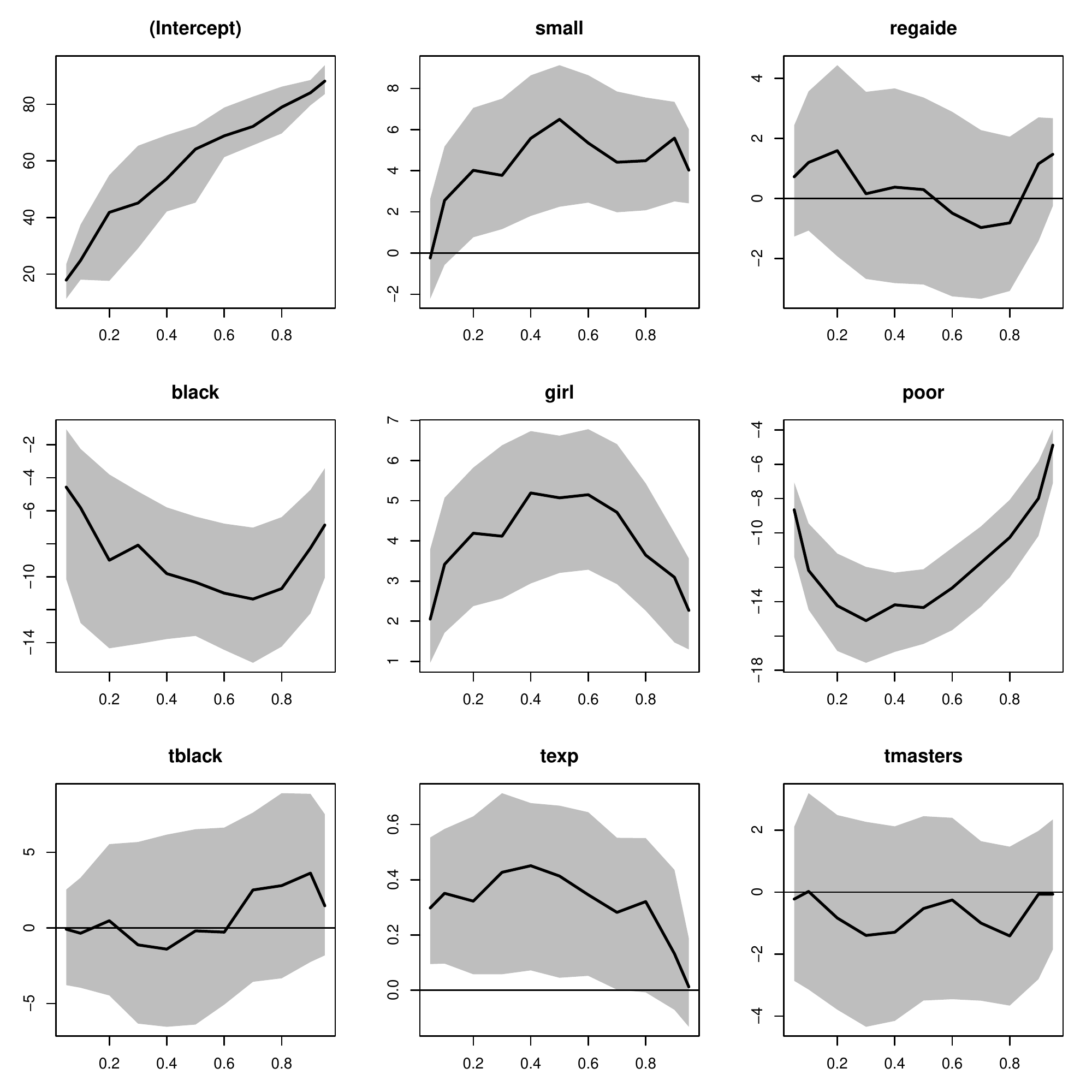}
\caption{QR coefficient estimates $\tau\mapsto\hat{\beta}_n(\tau)$ (solid black lines) of model \eqref{eq:star1}. The regression includes additive school ``fixed effects'' (not shown). The grey areas are pointwise 95\% wild gradient bootstrap confidence intervals based on bootstrap quantiles clustered at the classroom level.} \label{fig:star1}
\end{center}
\end{figure}

The results are shown in Figure \ref{fig:star1}. The solid black lines in each panel plot a coefficient estimate corresponding to a coefficient in \eqref{eq:star1} as a function of $\tau$. The vertical scale is the average percentile score. The grey bands are pointwise 95\% wild gradient bootstrap confidence intervals based on bootstrap quantiles computed from $m=999$ bootstrap simulations with Mammen weights. 
Students assigned to small classes mostly perform better than students assigned to regular classes (with or without aide), although the effect varies across the distribution. For scores above the .2 quantile of the score distribution, the difference is about five percentage points. This is in accordance with \citeauthor{krueger1999}'s (\citeyear{krueger1999}) findings. However, the benefits for students below the .2 quantile are much smaller and become insignificant at the .1 quantile. The impact of a smaller class on students at the very bottom of the score distribution is essentially zero. %The class size reduction therefore did not seem to make a difference for the weakest students. 
In addition, as in \citeauthor{krueger1999}'s mean regression analysis, the effect of being assigned a full-time aide is insignificant.

I now briefly discuss the other covariates. Black students perform worse than non-black students with otherwise identical characteristics; this is particularly pronounced between the first and third quartiles of the conditional score distribution, where black students' scores are about 10 percentage points lower. Girls generally score higher than boys, although the gap is quite small near the tails of the conditional score distribution. Poor students score up to 15 percentage points lower than otherwise identical students; however, this difference is much smaller near the top of the conditional distribution. As \citet{krueger1999} and earlier studies have found, teacher characteristics seem to matter little: their race and education (measured by whether they have a master's degree) have no significant impact. Another year of teaching experience has a small, positive effect for all but the very best students. 

As a referee points out, an issue with Monte Carlo studies such as those in the preceding section is that the data sets used in simulations are likely to be quite different from real data sets. I therefore also evaluate the performance of the wild gradient bootstrap and the alternative methods introduced in Experiment \ref{ex:signi} above through placebo interventions in the Project STAR data. 
\begin{experiment}[Placebo interventions]\label{ex:placebo}
%For this experiment, I removed all small classes and all schools with fewer than two regular-size classes without teacher's aide from the sample. This reduced the data set from 79 schools with a total of 317 classes to 18 schools with a total of 38 regular-size classes and 26 regular-size classes with aide; no school had more than three classrooms of the same type.

For this experiment, I removed all small classes from the sample so that only 194 regular-size classes (with and without teacher's aide) in the 79 project schools remained. Of these schools, 16 had two regular-size classes without aide and 2 had three such classes.

\begin{table}[htp]
\caption{Rejection frequencies of $\mathrm{H}_0\colon \beta_1(.5) = 0$ in placebo interventions for different values of $\beta_1(.5)$}\label{tab:placebo}
\centering
{\small
\resizebox{\columnwidth}{!}{%
\begin{tabular}{lp{0cm}ccp{0cm}ccp{0cm}ccp{0cm}cccccccc}%
\hline
& 
& \multicolumn{2}{c}{Mammen}
&
& \multicolumn{2}{c}{Rademacher}
&
& \multicolumn{2}{c}{Webb}
& 
& \multicolumn{2}{c}{Ana.}
%&
& \multicolumn{2}{c}{Reg.}
%&
& \multicolumn{2}{c}{Rank}
& \multicolumn{2}{c}{FHH}
\\ 
\cline{3-4}\cline{6-7}\cline{9-10}
&
& cv & se 
&
& cv & se 
&
& cv & se 
&
& \multicolumn{2}{c}{se} 
& \multicolumn{2}{c}{se}
& \multicolumn{2}{c}{score}
& \multicolumn{2}{c}{cv} \\ 
\hline
$\beta_1(.5) = 0$ (size)  & & .072 & .084 & & .077 & .093 & & .085 & .095 & & 
\multicolumn{2}{c}{.277} & \multicolumn{2}{c}{.284} & \multicolumn{2}{c}{.098} &\multicolumn{2}{c}{.311} \\ 
$\beta_1(.5) = 5$ (power) & & .413 & .427 & & .449 & .446 & & .450 & .450 & & 
\multicolumn{2}{c}{.701} & \multicolumn{2}{c}{.714} & \multicolumn{2}{c}{.516} &\multicolumn{2}{c}{.727} \\ 
\hline
\end{tabular}%
}}
\end{table}

In each of these 18 schools, I then randomly assigned one of the regular-size classes without aide the treatment indicator $\mathit{small}=1$. This mimics the random assignment of class sizes within schools in the original sample, even though in this case no student actually attended a small class. Next, I reran the QR in \eqref{eq:star1} and tested, at the 5\% level, the correct null hypothesis that the coefficient on $\mathit{small}$ is zero at the median, $\mathrm{H}_0\colon \beta_1(.5) = 0$, using the same methods as in Experiment \ref{ex:signi}. The rejection frequencies in the first line of results in Table \ref{tab:placebo} show the outcome of repeating this process 1,000 times. The bootstraps were again based on $m=299$ simulations. As can be seen, the wild gradient bootstrap test from Algorithm \ref{al:binf} with the Mammen distribution outperformed all other methods of inference, some by a very large margin. Still, the test over-rejected slightly. This can be attributed to the fact that the treatment effect is now identified off of comparisons within only 18 instead of 79 schools, which makes the estimation problem much more challenging than in the actual data. The size of the tests in the placebo experiment can, in that sense, be viewed as an upper bound for the size of the tests in the original sample. 

I also investigated power by increasing the percentile scores of all students in the randomly drawn small classes of the placebo experiment by 5. This increase is of the same order of magnitude as the estimated treatment effect at the median in the actual sample. Then I repeatedly tested the incorrect hypothesis $\mathrm{H}_0\colon \beta_1(.5) = 0$ (the correct value is $\beta_1(.5) = 5$) with the same experimental setup as before. The results are shown in the second line of Table \ref{tab:placebo}. Despite the now much smaller sample, the wild gradient bootstrap was able to reject the null in a large number of cases. The other methods rejected more often, but this was likely driven by their size distortion. Notable here is the high power of the \citet{wanghe2007} rank score test despite its relatively mild over-rejection under the null.
\hfill $\square$
\end{experiment}

%\pub{\addlines}

\phantomsection%
\label{rev:intraclass}%
The large differences in the finite-sample size of the methods of inference considered in the preceding experiment can be attributed to the within-cluster dependence in the data. This is also supported by a back-of-the-envelope comparison of the results here to the Monte Carlo experiments in Section \ref{s:mc}. For the Monte Carlo DGP \eqref{eq:mc}, the within-cluster correlation coefficient of the outcome variable can be shown to be approximately $\varrho$. For the Project STAR data, the \citet{karlinetal1981} intraclass correlation coefficient $$ \hat{\varrho}_n := \frac{\sum_{i=1}^n\sum_{k=1}^{c_i}\sum_{l\neq k} (Y_{ik} - \bar{Y}_n)(Y_{il} - \bar{Y}_n)/(c_i -1)}{\sum_{i=1}^n\sum_{k=1}^{c_i} (Y_{ik} - \bar{Y}_n)^2},\quad\text{where}\quad\bar{Y}_n = \frac{\sum_{i=1}^n\sum_{k=1}^{c_i}Y_{ik}}{\sum_{i=1}^n c_i},$$ of the percentile score is $.319$. This is a consistent estimate of the within-cluster correlation coefficient of the percentile score as long as both its mean and within-cluster covariance structure are identical across clusters. (Neither of these conditions is needed for any of the theoretical results in this paper.) At $\varrho = \hat{\varrho}_n$, the results of Experiments \ref{ex:signi} and \ref{ex:placebo} are quite similar, with the exception that the rank score test performed much better in Experiment \ref{ex:placebo} than the test based on analytical cluster-robust standard errors.

%This suggests that the data under consideration falls well within the region of within-cluster correlations where the wild gradient bootstrap test had good size and power.

Finally, before concluding this section, Figure \ref{fig:star2} illustrates the difference between a 95\% pointwise confidence interval based on a Powell sandwich estimator (as described in Experiment \ref{ex:signi}) that does not control for within-cluster correlation (dotted lines), the wild gradient bootstrap confidence interval shown in Figure \ref{fig:star1} (grey), and a 95\% wild bootstrap confidence band for the entire coefficient function of $\mathit{small}$ weighted by the bootstrap covariance matrix (dashed). As can be seen from the size of the grey area, not accounting for the possibility of peer effects and unobserved teacher characteristics via cluster-robust inference appears to give a false sense of precision at most quantiles. However, as the confidence band shows, we can conclude that the effect of the small class size is significantly positive over a large part of the support of the score distribution.

\begin{figure}[thp]
\begin{center}
\includegraphics[width=0.4\textwidth]{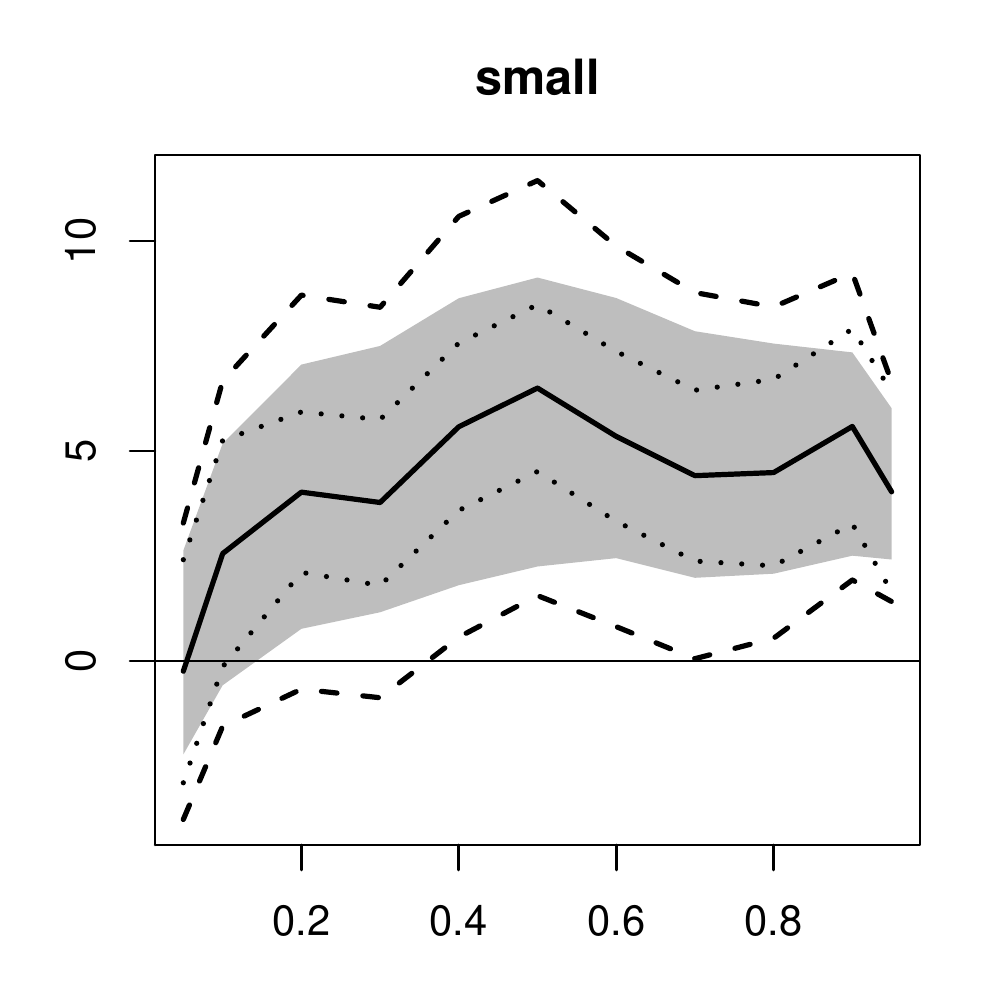}
\caption{QR coefficient estimate and pointwise confidence interval for $\mathit{small}$ from Figure \ref{fig:star1}, a 95\% pointwise confidence interval not robust to within-cluster correlation (dotted lines), and a 95\% wild bootstrap confidence band for the entire coefficient function (dashed).} \label{fig:star2}
\end{center}
\end{figure}

\section{Conclusion}\label{s:conc}
In this paper I develop a wild bootstrap procedure for cluster-robust inference in linear QR models. I show that the bootstrap leads to asymptotically valid inference on the entire QR process in a setting with a large number of small, heterogeneous clusters and provides consistent estimates of the asymptotic covariance function of that process. The proposed bootstrap procedure is easy to implement and performs well even when the number of clusters is much smaller than the sample size. A brief application to Project STAR data is provided. It is still an open question how cluster-level fixed effects that correspond to the intuitive notion of identifying parameters from within-cluster variation can fit into the present framework; this is currently under investigation by the author. Another question is if the jackknife can improve on the bootstrap in the current context; recent results by \citet{portnoy2014} for censored regression quantiles suggest this possibility.

\pre{
\section*{Supplementary Materials}

\begin{description}
\item[\rmfamily Supplementary appendix:] Technical appendix containing auxiliary results and proofs.
\end{description}

\phantomsection
\addcontentsline{toc}{section}{References}
\putbib[qspec]
\end{bibunit}
\newpage\setcounter{page}{1}
}

\appendix

\pre{\begin{bibunit}[chicago]}

\pub{\section*{Appendix}}

\pre{
\begin{center}
\textbf{\normalsize\uppercase{Supplementary Appendix to\\ ``Cluster-Robust Bootstrap Inference in\\ Quantile Regression Models''}}
\end{center}

\noindent Appendix \ref{sec:aux} introduces some notation and definitions that are used throughout the remainder of the paper. Then I state some auxiliary results. All proofs can be found in Appendix \ref{sec:proofs}.\addlines[2]
}

\section{Auxiliary Definitions and Results}\label{sec:aux}

\pub{
I first introduce some notation and definitions that are used throughout the remainder of the paper. Then I state some auxiliary results. All proofs can be found in the next section.
}
\begin{proof}[Notation]
For vectors $a$ and $b$, I will occasionally write $(a,b)$ instead of $(a^\top,b^\top)^\top$ if the dimensions are not essential. Take $(Y_{ik},X_{ik})=0$ for $c_i<k\leq c_{\max}$ whenever $c_i < c_{\max}$ and let $Y_i = (Y_{i1},\dots,Y_{ic_{\max}})$ and $X_i = (X_{i1}^\top,\dots,X_{ic_{\max}}^\top)^\top$. Let $\ep_n f = n^{-1/2}\sum_{i=1}^n ( f(Y_i, X_i) - \ev f(Y_i, X_i))$ be the empirical process evaluated at some function $f$ and let $\mean_n f = n^{-1}\sum_{i=1}^n f(Y_i,X_i)$ be the empirical average at $f$. I will frequently use the notation $|\ep_n f|_\mathcal{F} := \sup_{f\in\mathcal{F}}|\ep_n f|$ for functional classes and $|f_\theta|_\Theta := \sup_{\theta\in\Theta} |f_\theta|$ for functions indexed by parameters.  Define 
\begin{align*}
&m_{\beta,\tau}(Y_i,X_i) = \sum_{k=1}^{c_{\max}}\rho_{\tau}(Y_{ik} - X_{ik}^\top\beta), \quad z_{\beta,\tau}(Y_i,X_i) = \sum_{k=1}^{c_{\max}}\psi_{\tau}(Y_{ik} - X_{ik}^\top\beta)X_{ik},\\
&g_{\beta_1,\tau_1,\beta_2,\tau_2}(Y_i,X_i) = \sum_{k=1}^{c_{\max}}\sum_{l=1}^{c_{\max}} \psi_{\tau_1}(Y_{ik}-X_{ik}^\top\beta_1) \psi_{\tau_2}(Y_{il}-X_{il}^\top\beta_2)X_{ik}X_{il}^\top
\end{align*}
and the corresponding classes
\begin{align*}
&\mathcal{M}_\delta = \{ m_{\beta,\tau} - m_{\beta(\tau),\tau} : |\beta-\beta(\tau)|\leq \delta, \beta\in\Beta, \tau\in\Tau \}, \quad 
\mathcal{Z} = \{z_{\beta,\tau} : \beta\in\Beta, \tau\in\Tau \},\\ 
&\mathcal{G} = \{g_{\beta_1,\tau_1,\beta_2,\tau_2} : (\beta_1,\tau_1,\beta_2,\tau_2)\in\Beta\times\Tau\times\Beta\times\Tau\}.
\end{align*}
Write the $j$th coordinate projection as $x = (x_1,\dots, x_j ,\dots ,x_d) \mapsto \pi_j(x) = x_j$. Define a pseudometric $\varrho$ on $\mathcal{Z}$ by $$ \varrho\bigl( z_{\beta,\tau}, z_{\beta',\tau'} \bigr) = \max_{1\leq j\leq d} \sup_{n\geq 1} \bigl( \mean_n \ev(\pi_j\circ z_{\beta,\tau}-\pi_j\circ z_{\beta',\tau'})^2 \bigr)^{1/2}. $$
Denote by $\pi_{jh}$ the function that picks out the entry in the $j$th row and $h$th column of a matrix. For a matrix $A$, denote the Frobenius norm by $|A|=\mathrm{trace}^{1/2}(AA^\top)$;  if $A$ is a vector, this is the Euclidean norm. Let $\lambda_{\min}(A)$ be the smallest eigenvalue of a symmetric matrix $A$. Let $H_n(\beta) = n^{-1} \sum_{i=1}^n \sum_{k=1}^{c_i} \ev f_{ik}( X_{ik}^\top\beta\mid X_{ik}) X_{ik}X_{ik}^\top$ and note that $J_n(\tau) = H_n(\beta(\tau))$. Define the mean value $$I_n(\beta,\tau) = \int_0^1 H_n\Bigl(\beta(\tau) + t\bigl( \beta -\beta(\tau)\bigr)\Bigr)\, dt,$$ where the integral is taken componentwise. For scalars $a$ and $b$, the notation $a\lesssim b$ means $a$ is bounded by an absolute constant times $b$. 
\end{proof}
Some expressions above and below may be non-measurable; probability and expectation of these expressions are understood in terms of outer probability and outer expectation \citep[see, e.g.][p.\ 6]{vandervaartwellner1996}. Application of Fubini's theorem to such expectations requires additional care. A measurability condition that restores the Fubini theorem for independent non-identically distributed (inid) data is the ``almost measurable Suslin'' condition of \citet{kosorok2003}. It is satisfied in all applications below.
\begin{lemma}\label{l:suslin}
If Assumptions \ref{as:data}-\ref{as:smooth} are satisfied, then $\{ z_{\beta(\tau),\tau} : \tau\in\Tau\}$, $\{ z_{\beta,\tau} : \beta\in\Beta, \tau\in\Tau\}$ and $\{ m_{\beta,\tau}-m_{\beta(\tau),\tau} : \beta\in\Beta, \tau\in\Tau\}$ are almost measurable Suslin.
\end{lemma}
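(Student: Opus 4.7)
The plan is to verify Kosorok's almost measurable Suslin (AMS) condition by exhibiting, for each of the three classes, a countable sub-collection that serves as the ``Suslin'' skeleton and then checking the requisite pointwise-almost-sure approximation. Fix a countable dense set $\Tau_0\subset\Tau$ (say $\mathbb{Q}\cap\Tau$) and a countable dense set $\Beta_0\subset\Beta$ (say vectors with rational coordinates intersected with $\Beta$). Taking the obvious countable sub-collections indexed by $\Tau_0$ and $\Beta_0$ gives the candidate skeletons; the work is to show that each element of the full class is the pointwise a.s.\ limit of a sequence in its skeleton.

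For the class $\{m_{\beta,\tau}-m_{\beta(\tau),\tau}\}$, the approximation is essentially deterministic. The check function admits the representation $\rho_\tau(z) = \max\{\tau z, (\tau-1)z\}$, so $(\beta,\tau)\mapsto\rho_\tau(Y_{ik}-X_{ik}^\top\beta)$ is jointly continuous, hence so is $(\beta,\tau)\mapsto m_{\beta,\tau}(Y_i,X_i)$. Combined with the fact that $\tau\mapsto\beta(\tau)$ is continuous on $\Tau$ --- which follows from the uniqueness of the quantile solution together with the non-singularity of $J(\tau)$ guaranteed by Assumption \ref{as:smooth}\eqref{as:smooth:der} via a standard implicit function argument applied to the population gradient --- any $(\beta,\tau)\in\Beta\times\Tau$ can be approached by $(\beta_m,\tau_m)\in\Beta_0\times\Tau_0$ and the corresponding functions converge pointwise everywhere on the sample space.

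For the class $\mathcal{Z}=\{z_{\beta,\tau}\}$, the indicator $\psi_\tau(z)=\tau-1\{z<0\}$ introduces a jump at $z=0$, so joint continuity fails at the null set $\{Y_{ik}=X_{ik}^\top\beta\}$. I would dispose of this set using Assumption \ref{as:smooth}\eqref{as:smooth:den}: because $Y_{ik}\mid X_{ik}$ admits a bounded conditional density, $\prob(Y_{ik}=X_{ik}^\top\beta)=\ev\prob(Y_{ik}=X_{ik}^\top\beta\mid X_{ik})=0$ for every fixed $\beta\in\Beta$. Hence for each fixed $(\beta,\tau)$ and any approximating sequence $(\beta_m,\tau_m)\in\Beta_0\times\Tau_0$ with $(\beta_m,\tau_m)\to(\beta,\tau)$, continuity gives $z_{\beta_m,\tau_m}(Y_i,X_i)\to z_{\beta,\tau}(Y_i,X_i)$ outside an exceptional null set that depends on $(\beta,\tau)$. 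This is precisely the AMS requirement. The sub-collection $\{z_{\beta(\tau),\tau}:\tau\in\Tau\}$ is then handled by composing this construction with the continuous map $\tau\mapsto\beta(\tau)$ and using the skeleton $\{z_{\beta(\tau),\tau}:\tau\in\Tau_0\}$.

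The main obstacle is the treatment of $\mathcal{Z}$: the discontinuity of $\psi_\tau$ precludes a purely deterministic argument and forces one into the almost-sure regime, which is exactly why the weaker AMS condition (rather than genuine pointwise separability) is invoked here. Everything else reduces to routine continuity and density arguments; none of the steps require more than the smoothness and moment structure already imposed in Assumptions \ref{as:data}--\ref{as:smooth}.
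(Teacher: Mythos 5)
Your overall route---reduce to Lemma 2 of \citet{kosorok2003} by fixing countable dense skeletons $\Tau_0\subset\Tau$ and $\Beta_0\subset\Beta$ and verifying that each member of the class is approximable from the skeleton---is the same as the paper's, and your treatment of $\{m_{\beta,\tau}-m_{\beta(\tau),\tau}\}$ (deterministic joint continuity of the check function together with continuity of $\tau\mapsto\beta(\tau)$) is sound and parallels the paper's Lipschitz bound. The gap is in the step for $\mathcal{Z}$, where you assert that convergence of $z_{\beta_m,\tau_m}(Y_i,X_i)$ to $z_{\beta,\tau}(Y_i,X_i)$ ``outside an exceptional null set that depends on $(\beta,\tau)$'' is ``precisely the AMS requirement.'' It is not. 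The separability condition behind Kosorok's Lemma 2 places the supremum over the index set \emph{inside} the outer probability: one needs a single null set off of which, \emph{simultaneously} for every $(\beta,\tau)\in\Beta\times\Tau$, the infimum over the countable skeleton of the empirical discrepancy $\mean_n(\pi_j\circ z_{\beta,\tau}-\pi_j\circ z_{\beta',\tau'})^2$ equals zero. ``For each $(\beta,\tau)$, almost surely'' is strictly weaker than ``almost surely, for each $(\beta,\tau)$'' over an uncountable index set, and the obvious repair---taking the union of your exceptional sets---fails here: $\bigcup_{\beta\in\Beta}\{Y_{ik}=X_{ik}^\top\beta\ \text{for some}\ i,k\}$ is the event that some $Y_{ik}$ falls in the set $\{X_{ik}^\top\beta:\beta\in\Beta\}$, which is an interval of positive length whenever $X_{ik}\neq0$ and $\Beta$ has nonempty interior, so this union has positive probability. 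Pushing the index-dependence into the null set is exactly what the definition is designed to rule out; it is the same uncountability problem that motivates AMS in the first place.

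What is needed is a realization-wise (sample-path) argument, which is how the paper proceeds: using $|1\{a<b\}-1\{a<c\}|\leq 1\{|a-b|<|b-c|\}$ and the Lipschitz continuity of $\tau\mapsto\beta(\tau)$ from \eqref{eq:betalip} (itself obtained via the implicit function theorem applied to the population first-order condition, not merely asserted), the empirical discrepancy is bounded by
\[
\mean_n\sum_{k=1}^{c_i}\Bigl((\tau-\tau')^2+1\bigl\{|Y_{ik}-X_{ik}^\top\beta(\tau)|<|X_{ik}|\,C\,|\tau-\tau'|\bigr\}\Bigr)\pi_j(X_{ik})^2 ,
\]
and for each fixed realization the right-hand side is driven to zero by letting $|\tau-\tau'|\downarrow0$ along $\Tau\cap\mathbb{Q}$, because the indicators involve \emph{strict} inequalities with shrinking radii. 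This yields $\inf_{\tau'\in\Tau\cap\mathbb{Q}}(\cdot)=0$ for every $\tau$ on the same event, so the supremum over $\tau$ of the infima vanishes with outer probability one; the analogous computation handles $\{z_{\beta,\tau}:\beta\in\Beta,\tau\in\Tau\}$. Your proof needs a quantitative step of this kind (or an explicit construction of skeleton sequences adapted to the realization, chosen so that none of the finitely many indicators $1\{Y_{ik}<X_{ik}^\top\beta'\}$ flips in the limit) rather than a per-index null-set argument.
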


The following lemmas are used in the proofs of the results stated in the main text.

\begin{lemma}\label{l:maxineq}
Suppose Assumptions \ref{as:data}-\ref{as:smooth} hold. Then,
\begin{compactenum}[\upshape(i)]
\item\label{l:maxineq_i} $\sup_{n\geq 1}\ev |\mathbb{G}_n m|^q_{\mathcal{M}_\delta} \lesssim \delta^q$,
%\item\label{l:maxineq_iii} $\sup_{n\geq 1}\ev |\mathbb{G}_n m|^q_{\mathcal{M}} < \infty$,\marginpar{!}
\item\label{l:maxineq_ii} $\sup_{n\geq 1}\ev |\mathbb{W}_n(\beta,\tau)|^q_{\Beta\times\Tau} < \infty$, and
\item\label{l:maxineq_iv} $\sup_{n\geq 1}\ev |\mathbb{G}_n z|^q_{\mathcal{Z}} < \infty$.
\end{compactenum}
\end{lemma}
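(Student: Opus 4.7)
The plan is to bound the three suprema by applying maximal inequalities for Euclidean (VC-type) classes adapted to the inid setting, as in \citet{pollard1991b} and \citet{kosorok2003}, and then to promote the resulting first-moment estimates to $q$th-moment estimates via a Hoffmann-J\o rgensen argument.

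\textbf{Step 1 (classes are Euclidean with controlled envelopes).} First I would verify that the three function classes are VC-type. The coordinate projections $\pi_j \circ z_{\beta,\tau}$ are finite sums of products of the fixed functions $\pi_j(X_{ik})$ with indicators $1\{Y_{ik} < X_{ik}^\top\beta\}$ ranging over a VC-subgraph class of half-spaces, which preserves the VC property; the same argument handles $\mathcal{M}_\delta$ after noting that $\rho_\tau(Y_{ik} - X_{ik}^\top\beta)$ is a fixed Lipschitz transform of a finite-dimensional VC class. Next, the Lipschitz bound $|\rho_\tau(u) - \rho_\tau(v)| \leq |u-v|$ gives, for $|\beta - \beta(\tau)|\leq \delta$, the envelope $M_\delta(Y_i, X_i) \leq \delta\sum_{k=1}^{c_{\max}} |X_{ik}| =: \delta F(Y_i, X_i)$, and the same $F$ (times $c_{\max}$, using $|\psi_\tau|\leq 1$) serves as envelope for $\mathcal{Z}$. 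Assumption \ref{as:smooth}(i) gives $\sup_i \ev F^q < \infty$, hence $\sup_i\ev M_\delta^q \lesssim \delta^q$.

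\textbf{Step 2 (maximal inequality and passage to $q$th moments).} I would then apply the Pollard--Kosorok maximal inequality for Euclidean classes under independent, non-identically distributed sampling; combined with the envelope bounds just obtained, this yields $\ev|\mathbb{G}_n z|_{\mathcal{Z}} \lesssim (\sup_i \ev F^2)^{1/2}$ and $\ev|\mathbb{G}_n m|_{\mathcal{M}_\delta} \lesssim \delta (\sup_i \ev F^2)^{1/2}$ uniformly in $n$. To obtain the $q$th moment bounds in (i) and (iii), I invoke Hoffmann-J\o rgensen's inequality, which for any VC-type class $\mathcal{F}$ with envelope $F_\mathcal{F}$ gives
\[ \ev |\mathbb{G}_n f|_{\mathcal{F}}^q \lesssim \bigl(\ev |\mathbb{G}_n f|_{\mathcal{F}}\bigr)^q + n^{-q/2}\,\ev\max_{1\leq i\leq n} F_{\mathcal{F}}(Y_i,X_i)^q, \]
and the second term is dominated by $\sup_i \ev F_\mathcal{F}^q$ via $\ev\max_i F_\mathcal{F}^q \leq \sum_i \ev F_\mathcal{F}^q \leq n\sup_i \ev F_\mathcal{F}^q$. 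With $F_{\mathcal{M}_\delta} = \delta F$ this yields $\sup_n \ev |\mathbb{G}_n m|^q_{\mathcal{M}_\delta} \lesssim \delta^q$, and with $F_\mathcal{Z} = c_{\max} F$ it yields the uniform-in-$n$ bound of (iii). The measurability needed to apply these inequalities in the form stated is supplied by Lemma~\ref{l:suslin}.

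\textbf{Step 3 (multipliers).} For part (ii), I would write $\mathbb{W}_n(\beta,\tau) = n^{-1/2}\sum_{i=1}^n W_i z_{\beta,\tau}(Y_i, X_i)$ with $(W_i)$ independent of the data. Conditioning on the data and applying the multiplier form of the Euclidean-class maximal inequality reduces the sup over $(\beta,\tau)$ to the envelope-based quantities already controlled in Step 2, and a second application of Hoffmann-J\o rgensen together with $\ev|W|^q<\infty$ gives $\sup_n\ev |\mathbb{W}_n(\beta,\tau)|_{\Beta\times\Tau}^q < \infty$.

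The main obstacle is verifying that the Pollard--Kosorok maximal inequality can be applied \emph{uniformly in $n$} for inid data with $q$th-power envelopes, and that the constants in the Hoffmann-J\o rgensen step do not depend on $n$. Both points hinge on the VC-type structure of the classes, which yields covering numbers that are polynomial in $\varepsilon$ with exponent depending only on the dimension of the underlying parameter space, so the entropy integrals are bounded uniformly in $n$.
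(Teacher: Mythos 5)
Your proposal is correct and follows essentially the same route as the paper: establish that each class has VC-type (finite uniform entropy) structure with the envelope $\delta\sum_k|X_{ik}|$, $\sum_k|X_{ik}|$, or $|W_i|\sum_k|X_{ik}|$, invoke the almost-measurable-Suslin property to justify symmetrization for inid data, and convert the entropy-plus-envelope bounds into $q$th-moment bounds on the supremum. The paper accomplishes your Step 2 by citing van der Vaart and Wellner's Theorem 2.14.1 (whose proof is exactly your first-moment maximal inequality combined with Hoffmann-J\o rgensen), and it handles the multiplier case by treating $(y,x,w)\mapsto w\,\pi_j(x)\psi_\tau(y-x^\top\beta)$ as a VC-type class in the enlarged sample space rather than conditioning on the data, but these are cosmetic differences.
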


\begin{lemma}\label{l:lowerbound}
If Assumptions \ref{as:data}-\ref{as:gstat} are satisfied, then $|\beta - \beta(\tau)|^2\lesssim {M}_{n}(\beta,\tau) - {M}_{n}(\beta(\tau),\tau)$ for all $\beta\in\Beta$ and all $\tau\in\Tau$.
\end{lemma}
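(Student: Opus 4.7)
The plan is a standard quadratic minorization argument for the population check-function objective, exploiting the conditional quantile restriction to kill the linear term and Assumption \ref{as:smooth}\eqref{as:smooth:der} to extract curvature from the Jacobian integrand. Note first that by definition $M_n(\beta(\tau),\tau)=0$, so it suffices to show $M_n(\beta,\tau)\gtrsim|\beta-\beta(\tau)|^2$ uniformly in $\beta\in\Beta$ and $\tau\in\Tau$. Throughout, write $U_{ik}=Y_{ik}-X_{ik}^\top\beta(\tau)$ and $v_{ik}=X_{ik}^\top(\beta-\beta(\tau))$.

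The first step is to apply the Knight identity $\rho_\tau(u-v)-\rho_\tau(u)=-v\psi_\tau(u)+\int_0^v(1\{u\leq s\}-1\{u\leq 0\})\,ds$ to each summand inside $\mathbb{M}_n(\beta,\tau)-\mathbb{M}_n(\beta(\tau),\tau)$ and take expectations. Assumption \ref{as:model} together with Assumption \ref{as:smooth}\eqref{as:smooth:den} (which makes $F_{ik}(\cdot\mid X_{ik})$ continuous) gives $\mathbb{E}[\psi_\tau(U_{ik})\mid X_{ik}]=\tau-F_{ik}(X_{ik}^\top\beta(\tau)\mid X_{ik})=0$, so the linear term drops out by the tower property. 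Interchanging expectation and the inner integral (justified by boundedness of the integrand and finiteness of $\mathbb{E}|X_{ik}|<\infty$ from Assumption \ref{as:smooth}\eqref{as:smooth:mom}), I obtain, after a Fubini-type swap in $(s,t)$,
\begin{equation*}
M_n(\beta,\tau)=\frac{1}{n}\sum_{i=1}^n\sum_{k=1}^{c_i}\mathbb{E}\int_0^{v_{ik}} f_{ik}\bigl(X_{ik}^\top\beta(\tau)+t\bigm|X_{ik}\bigr)\,(v_{ik}-t)\,dt,
\end{equation*}
with the convention that $\int_0^{v}\cdot\,dt = -\int_v^0\cdot\,dt$ when $v<0$, in which case the integrand $v_{ik}-t$ is nonpositive and the product stays nonnegative.

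For the second step I extract curvature. For each $t$ lying between $0$ and $v_{ik}$, convexity of $\Beta$ guarantees that $\beta(\tau)+(t/v_{ik})(\beta-\beta(\tau))\in\Beta$, so that $X_{ik}^\top\beta(\tau)+t=X_{ik}^\top\tilde\beta$ for some $\tilde\beta\in\Beta$. Assumption \ref{as:smooth}\eqref{as:smooth:der} then yields a constant $f_{\min}>0$, independent of $i$, $k$, $\tau$, and $\beta$, with $f_{ik}(X_{ik}^\top\beta(\tau)+t\mid X_{ik})\geq f_{\min}$ on that range. A direct evaluation of $\int_0^{v_{ik}}(v_{ik}-t)\,dt=v_{ik}^2/2$ (and the analogous computation when $v_{ik}<0$) yields the pointwise lower bound $v_{ik}^2 f_{\min}/2$. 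Summing and writing $v_{ik}^2=(\beta-\beta(\tau))^\top X_{ik}X_{ik}^\top(\beta-\beta(\tau))$ gives
\begin{equation*}
M_n(\beta,\tau)\geq\frac{f_{\min}}{2}\,(\beta-\beta(\tau))^\top\!\left[\frac{1}{n}\sum_{i=1}^n\sum_{k=1}^{c_i}\mathbb{E}X_{ik}X_{ik}^\top\right]\!(\beta-\beta(\tau)).
\end{equation*}

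The last step is to invoke Assumption \ref{as:smooth}\eqref{as:smooth:mom}--\eqref{as:smooth:den} through Assumption \ref{as:smooth}(ii): the bracketed Gram matrix is positive definite uniformly in $n$, so its smallest eigenvalue is bounded below by some $\lambda_0>0$ that does not depend on $n$. This yields $M_n(\beta,\tau)\geq(f_{\min}\lambda_0/2)|\beta-\beta(\tau)|^2$ for all $\beta\in\Beta$ and $\tau\in\Tau$, which is the claim. The only real obstacle is bookkeeping: keeping the signs right when $v_{ik}<0$ and making sure that the constants coming from Assumption \ref{as:smooth}\eqref{as:smooth:der} and Assumption \ref{as:smooth}(ii) do not depend on $n$, $\tau$, or $\beta$.
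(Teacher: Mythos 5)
Your proposal is correct and follows essentially the same route as the paper: both arguments kill the linear term at the minimizer and lower-bound the remaining quadratic form by $f_{\min}$ times the smallest eigenvalue of the uniformly positive-definite Gram matrix $n^{-1}\sum_{i}\sum_{k}\ev X_{ik}X_{ik}^\top$, using Assumptions \ref{as:smooth}\eqref{as:smooth:der} and (ii). The only difference is cosmetic — you derive the integrated-Hessian term explicitly from the Knight identity, while the paper invokes a Taylor expansion of $M_n$ with the mean value $I_n(\beta,\tau)$ and a Rayleigh-quotient bound.
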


\begin{lemma}\label{l:uniint} Suppose Assumptions \ref{as:data}-\ref{as:gstat} are true. Then, for all $0 < p < q$,
\begin{compactenum}[\upshape(i)]
\item\label{l:uniint:bootsam} $\sup_{n\geq 1}\ev|\sqrt{n}( \hat{\beta}^*_n(\tau) - \hat{\beta}_n(\tau))|^{p}_\Tau< \infty$,
\item\label{l:uniint:bootpop} $\sup_{n\geq 1}\ev|\sqrt{n}( \hat{\beta}^*_n(\tau) - \beta(\tau))|^{p}_\Tau< \infty$, and
\item\label{l:uniint:sampop} $\sup_{n\geq 1}\ev|\sqrt{n}( \hat{\beta}_n(\tau) - \beta(\tau))|^{p}_\Tau< \infty$.
\end{compactenum}
\end{lemma}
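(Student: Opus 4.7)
The plan is to establish, uniformly in $n$, the tail bounds
\[
P\bigl(\sqrt{n}|\hat{\beta}_n-\beta|_\Tau>t\bigr)\lesssim 1\wedge t^{-q},\qquad P\bigl(\sqrt{n}|\hat{\beta}^*_n-\beta|_\Tau>t\bigr)\lesssim 1\wedge t^{-q},
\]
with constants independent of $n$ and $t$, and then integrate each tail against $pt^{p-1}$, which is finite for $p<q$. This yields (iii) and (ii); then (i) follows by Minkowski's inequality from the triangle inequality $|\hat{\beta}^*_n-\hat{\beta}_n|_\Tau\leq |\hat{\beta}^*_n-\beta|_\Tau+|\hat{\beta}_n-\beta|_\Tau$. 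The main engine is a peeling device of the Alexander--Kato type, driven by the quadratic lower bound in Lemma \ref{l:lowerbound} and the maximal inequalities in Lemma \ref{l:maxineq}; Lemma \ref{l:suslin} supplies the measurability needed for the outer expectations of the suprema that appear below.

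For (iii), I would combine the minimizer inequality $\mathbb{M}_n(\hat{\beta}_n(\tau),\tau)\leq \mathbb{M}_n(\beta(\tau),\tau)$ with the identity
\[
M_n(\beta,\tau)=\mathbb{M}_n(\beta,\tau)-\mathbb{M}_n(\beta(\tau),\tau)-\mathbb{G}_n(m_{\beta,\tau}-m_{\beta(\tau),\tau})/\sqrt{n}
\]
to obtain $M_n(\hat{\beta}_n(\tau),\tau)\leq |\mathbb{G}_n m|_{\mathcal{M}_{\delta(\tau)}}/\sqrt{n}$, where $\delta(\tau):=|\hat{\beta}_n(\tau)-\beta(\tau)|$. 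Lemma \ref{l:lowerbound} then gives $c\,\delta(\tau)^2\leq |\mathbb{G}_n m|_{\mathcal{M}_{\delta(\tau)}}/\sqrt{n}$. Peeling the event $\{\sqrt{n}|\hat{\beta}_n-\beta|_\Tau>t\}$ into shells $\{2^{j-1}t<\sqrt{n}|\hat{\beta}_n-\beta|_\Tau\leq 2^j t\}$ for $j\geq 1$ (truncated at $j=O(\log_2\sqrt{n})$ by compactness of $\Beta$), Markov's inequality combined with Lemma \ref{l:maxineq}(i) bounds the $j$-th shell probability by a constant multiple of $(2^j t/\sqrt{n})^q/(2^{2(j-1)}t^2/\sqrt{n})^q=2^{q(2-j)}t^{-q}$, which sums geometrically in $j$.

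Part (ii) is analogous but the bootstrap minimizer inequality $\mathbb{M}^*_n(\hat{\beta}^*_n(\tau),\tau)\leq \mathbb{M}^*_n(\beta(\tau),\tau)$ combined with the same $M_n$-decomposition produces
\[
c\,\delta^*(\tau)^2\leq |\mathbb{W}_n(\tau)|\,\delta^*(\tau)/\sqrt{n}+|\mathbb{G}_n m|_{\mathcal{M}_{\delta^*(\tau)}}/\sqrt{n},
\]
where $\delta^*(\tau):=|\hat{\beta}^*_n(\tau)-\beta(\tau)|$. On shell $j$ this forces either $|\mathbb{W}_n|_\Tau\gtrsim 2^j t$ or $|\mathbb{G}_n m|_{\mathcal{M}_{2^j t/\sqrt{n}}}\gtrsim 2^{2j}t^2/\sqrt{n}$; Markov combined with Lemma \ref{l:maxineq}(ii) and (i) respectively bounds each probability by $\lesssim 2^{-jq}t^{-q}$, and summing in $j$ yields the claim. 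Note $\ev|\mathbb{W}_n|_\Tau^q\leq \ev|\mathbb{W}_n(\beta,\tau)|^q_{\Beta\times\Tau}$ because $\hat{\beta}_n(\tau)\in\Beta$, so the bound is unconditional over both the data and the bootstrap weights and no auxiliary conditioning argument is required.

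The main obstacle is engineering the peeling so that the $q$-th moment bound of Lemma \ref{l:maxineq} on a $\delta$-shell of radius $2^j t/\sqrt{n}$ contributes geometrically in $j$ independently of the peeling depth. The quadratic lower bound in Lemma \ref{l:lowerbound} is precisely what matches the square in Markov's inequality to produce the needed cancellation between $\delta^q$ in the numerator and $\delta^{2q}$ in the denominator; any weaker (e.g.\ linear) lower bound would destroy the geometric decay. Once the tail bounds are in hand, $\ev|\sqrt{n}(\hat{\beta}_n-\beta)|_\Tau^p=\int_0^\infty pt^{p-1}P(\sqrt{n}|\hat{\beta}_n-\beta|_\Tau>t)\,dt$ is bounded by a universal constant plus $\int_1^\infty pt^{p-1-q}\,dt$, finite iff $p<q$, completing the argument.
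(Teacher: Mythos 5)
Your proposal is correct and follows essentially the same route as the paper's proof: a peeling argument over shells in $\sqrt{n}|\beta-\beta(\tau)|$, driven by the quadratic lower bound of Lemma \ref{l:lowerbound} and the $q$-th moment maximal inequalities of Lemma \ref{l:maxineq}, with Markov's inequality on each shell, geometric summation, and integration of the resulting $t^{-q}$ tail. The only differences are cosmetic (you scale the shells by $t$ where the paper fixes unit shells and sets $N=\lfloor\log_2 t\rfloor$, and you deduce part (i) via Minkowski rather than the Lo\`eve $c_r$ inequality, which requires the paper's harmless normalization $p\geq 2$, or at least $p\ge 1$).
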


\begin{lemma}\label{l:jbound}
Suppose Assumptions \ref{as:data}-\ref{as:smooth} hold with $q\geq 3$. Then $I_{n}(\beta,\tau)$ is invertible and, uniformly in $n$, $|I_{n}^{-1}(\beta,\tau) - J_{n}^{-1}(\tau)| \lesssim |\beta-\beta(\tau)|$ for all $\beta\in\Beta$ and $\tau\in\Tau$.
\end{lemma}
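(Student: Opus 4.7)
The strategy is to establish the Lipschitz bound $|I_n(\beta,\tau) - J_n(\tau)| \lesssim |\beta-\beta(\tau)|$ on the matrices themselves and to argue that $I_n(\beta,\tau)$ and $J_n(\tau)$ both have smallest eigenvalue bounded away from zero uniformly in $n$, $\beta\in\Beta$, and $\tau\in\Tau$. The conclusion then follows from the standard matrix identity $I_n^{-1}(\beta,\tau) - J_n^{-1}(\tau) = I_n^{-1}(\beta,\tau)\bigl(J_n(\tau) - I_n(\beta,\tau)\bigr)J_n^{-1}(\tau)$.

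First I would verify uniform positive definiteness of $H_n(\beta)$ for every $\beta\in\Beta$. For any unit vector $a\in\mathbb{R}^d$, write
\begin{equation*}
a^\top H_n(\beta) a = \frac{1}{n}\sum_{i=1}^n\sum_{k=1}^{c_i}\ev\bigl[f_{ik}(X_{ik}^\top\beta\mid X_{ik})(a^\top X_{ik})^2\bigr].
\end{equation*}
By Assumption \ref{as:smooth}\eqref{as:smooth:der}, $f_{ik}(X_{ik}^\top\beta\mid X_{ik})$ is bounded below by some $\underline{f}>0$ uniformly in $i,k,\beta$, so $a^\top H_n(\beta) a \geq \underline{f}\, a^\top \bigl(n^{-1}\sum_{i,k}\ev X_{ik}X_{ik}^\top\bigr) a$, which is bounded below by a positive constant uniformly in $n$ by Assumption \ref{as:smooth}(ii). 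Since $I_n(\beta,\tau)$ is a convex combination (in the integral sense) of matrices $H_n(\beta(\tau) + t(\beta-\beta(\tau)))$ along the segment connecting $\beta(\tau)$ and $\beta$, which stays in $\Beta$ by convexity of $\Beta$, the same lower bound transfers to $I_n(\beta,\tau)$. Consequently $\|I_n^{-1}(\beta,\tau)\|$ and $\|J_n^{-1}(\tau)\|$ are uniformly bounded.

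Next I would control the modulus of continuity of $H_n$. For $\beta_t := \beta(\tau)+t(\beta-\beta(\tau))$, the mean value theorem in $y$ together with Assumption \ref{as:smooth}\eqref{as:smooth:den}, which bounds $|\partial_y f_{ik}|$ uniformly by some $M<\infty$, gives
\begin{equation*}
\bigl| f_{ik}(X_{ik}^\top\beta_t\mid X_{ik}) - f_{ik}(X_{ik}^\top\beta(\tau)\mid X_{ik}) \bigr| \leq M\, t\, |X_{ik}|\,|\beta-\beta(\tau)|.
\end{equation*}
Multiplying by $|X_{ik}X_{ik}^\top|\leq |X_{ik}|^2$, taking expectations, averaging, and using $\ev|X_{ik}|^3<\infty$ uniformly (which holds because $q\geq 3$ in Assumption \ref{as:smooth}\eqref{as:smooth:mom}) yields $|H_n(\beta_t) - J_n(\tau)| \lesssim t\,|\beta-\beta(\tau)|$ uniformly in $n$, $\beta$, $\tau$. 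Integrating over $t\in[0,1]$ produces $|I_n(\beta,\tau) - J_n(\tau)|\lesssim |\beta-\beta(\tau)|$.

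Finally, plugging the two ingredients into the matrix-difference identity gives
\begin{equation*}
\bigl| I_n^{-1}(\beta,\tau) - J_n^{-1}(\tau) \bigr| \leq \bigl\|I_n^{-1}(\beta,\tau)\bigr\|\,\bigl|J_n(\tau) - I_n(\beta,\tau)\bigr|\,\bigl\|J_n^{-1}(\tau)\bigr\| \lesssim |\beta-\beta(\tau)|,
\end{equation*}
uniformly in $n$, $\beta\in\Beta$, and $\tau\in\Tau$. The main (minor) obstacle is the verification that the lower eigenvalue bound is genuinely uniform in the direction $\beta$ along the entire segment—this is why the convexity of $\Beta$ in Assumption \ref{as:model} is needed—after which the Lipschitz step is a standard first-order Taylor estimate requiring only the $q\geq 3$ moment bound on $X_{ik}$ to handle the product $|X_{ik}|\cdot|X_{ik}X_{ik}^\top|$.
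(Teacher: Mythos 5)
Your proposal is correct and follows essentially the same route as the paper: a mean-value expansion of the conditional density in $y$ combined with the uniform bound on $\partial_y f_{ik}$ and the third moment of $X_{ik}$ to get $|I_n(\beta,\tau)-J_n(\tau)|\lesssim|\beta-\beta(\tau)|$, a uniform lower bound on $\lambda_{\min}$ of $H_n$ (hence of $I_n$ and $J_n$) from Assumptions \ref{as:smooth}(ii) and \eqref{as:smooth:der}, and the identity $A^{-1}-B^{-1}=B^{-1}(B-A)A^{-1}$ to transfer the bound to the inverses. The only cosmetic difference is that the paper bounds $|I_n^{-1}|$ and $|J_n^{-1}|$ explicitly via $|A^{-1}|\leq\sqrt{d\,\lambda_{\min}(A)^{-1}}$ for the Frobenius norm, which your eigenvalue argument delivers equally well.
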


\begin{lemma}\label{l:junif}
If Assumptions \ref{as:data}-\ref{as:gstat} hold with $q\geq 3$, then  $|J_{n}(\tau) - J(\tau)|_{\Tau}$ and $|J_{n}^{-1}(\tau) - J^{-1}(\tau)|_{\Tau}$ converge to zero.
\end{lemma}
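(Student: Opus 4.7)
The strategy is to promote the pointwise convergence supplied by Assumption~\ref{as:gstat} to uniform convergence on the compact set $\Tau$ by establishing equicontinuity of the family $\{J_n\}$, and then to transfer this to the inverses through a uniform eigenvalue bound.

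First, I would show that $\tau\mapsto\beta(\tau)$ is Lipschitz on $\Tau$. Because $f_{ik}(\cdot\mid x)$ is bounded below by some $c>0$ (Assumption~\ref{as:smooth}\eqref{as:smooth:der}), differentiating the identity $F_{ik}(x^\top\beta(\tau)\mid x)=\tau$ in $\tau$ yields $\partial_\tau Q_{ik}(\tau\mid x)=1/f_{ik}(x^\top\beta(\tau)\mid x)\leq 1/c$, so $\tau\mapsto x^\top\beta(\tau)$ is Lipschitz with constant $1/c$, uniformly in $i,k$ and $x\in\mathcal{X}_{ik}$. Squaring this bound, taking expectations, averaging over $i,k$, and invoking the uniform positive definiteness of $n^{-1}\sum_{i,k}\ev X_{ik}X_{ik}^\top$ gives a constant $\lambda>0$ with $\lambda|\beta(\tau)-\beta(\tau')|^2\leq (c_{\max}/c^2)|\tau-\tau'|^2$, hence $|\beta(\tau)-\beta(\tau')|\lesssim|\tau-\tau'|$ on $\Tau$.

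Next, I would combine this with Assumption~\ref{as:smooth}\eqref{as:smooth:den}, which provides a uniform upper bound on $\partial_y f_{ik}(y\mid x)$. By the mean value theorem,
\[
\bigl|f_{ik}\bigl(X_{ik}^\top\beta(\tau)\bigm|X_{ik}\bigr) - f_{ik}\bigl(X_{ik}^\top\beta(\tau')\bigm|X_{ik}\bigr)\bigr|\lesssim|X_{ik}|\,|\beta(\tau)-\beta(\tau')|.
\]
Substituting into the definition of $J_n$ and using $q\geq 3$ in Assumption~\ref{as:smooth}\eqref{as:smooth:mom} to bound $n^{-1}\sum_{i,k}\ev|X_{ik}|^3$ uniformly in $n$, I obtain $|J_n(\tau)-J_n(\tau')|\lesssim|\tau-\tau'|$, uniformly in $n$. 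Since the family $\{J_n\}$ is therefore uniformly bounded and uniformly Lipschitz on the compact set $\Tau$, the pointwise convergence in Assumption~\ref{as:gstat} upgrades to $|J_n-J|_\Tau\to 0$ by a standard Arzel\`a--Ascoli argument.

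For the second claim, I would observe that for any unit vector $a\in\mathbb{R}^d$, Assumption~\ref{as:smooth}\eqref{as:smooth:der} together with uniform positive definiteness of $n^{-1}\sum_{i,k}\ev X_{ik}X_{ik}^\top$ yields $a^\top J_n(\tau)a\geq c\lambda$ with $\lambda>0$, uniformly in $\tau\in\Tau$ and $n$. Passing to the limit gives the same bound for $J(\tau)$, so both $|J_n^{-1}(\tau)|$ and $|J^{-1}(\tau)|$ are bounded uniformly in $\tau\in\Tau$ and $n$. The identity $J_n^{-1}-J^{-1}=J_n^{-1}(J-J_n)J^{-1}$ then converts the uniform convergence of $J_n$ into uniform convergence of $J_n^{-1}$. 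The main obstacle is the first step: extracting Lipschitz continuity of $\tau\mapsto\beta(\tau)$ from the smoothness and moment conditions without presuming differentiability of $\beta$ directly; the remainder is essentially mechanical once this is in place.
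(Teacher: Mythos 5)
Your proposal is correct and follows essentially the same route as the paper: a uniform Lipschitz bound $|J_n(\tau)-J_n(\tau')|\lesssim|\beta(\tau)-\beta(\tau')|\lesssim|\tau-\tau'|$ (via the mean value theorem on $f_{ik}$, the third-moment bound, and the Lipschitz continuity of $\tau\mapsto\beta(\tau)$), then Arzel\`a--Ascoli to upgrade pointwise to uniform convergence, and finally the identity $J_n^{-1}-J^{-1}=J_n^{-1}(J-J_n)J^{-1}$ with the uniform eigenvalue lower bound for the inverses. The only cosmetic difference is how the Lipschitz property of $\beta(\cdot)$ is obtained: the paper derives it (as equation~\eqref{eq:betalip}, in the proof of Lemma~\ref{l:suslin}) from the implicit function theorem applied to the aggregated first-order condition, giving $d\beta(\tau)/d\tau=J_n^{-1}(\tau)\,n^{-1}\sum_{i,k}\ev X_{ik}$, whereas you work pointwise in $x$ with the conditional distribution function and then aggregate through the design matrix --- and your worry about presupposing differentiability of $\beta$ is easily dispatched by applying the mean value theorem to $F_{ik}(\cdot\mid x)$ between $x^\top\beta(\tau)$ and $x^\top\beta(\tau')$ rather than differentiating in $\tau$.
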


%\begin{lemma}\label{l:mconv}
%Let $\{Z_{n}(\tau):\tau\in\Tau\}$ be a sequence of arbitrary stochastic processes such that $|Z_n(\tau)|_{\Tau}\pto^{\prob} 0$ and $\sup_{n\geq 1}\ev |Z_{n}(\tau)|^r_{\Tau}<\infty$ for some $r>1$. Then $\ev^*|Z_n(\tau)|_{\Tau}\pto^{\prob} 0$.
%\end{lemma}

%In the following, I will simply write $\varrho(\beta,\tau)(\beta',\tau')$ instead of $\varrho( (\beta,\tau),(\beta',\tau'))$.
\begin{lemma}\label{l:sequi}
If Assumptions \ref{as:data}-\ref{as:smooth} hold, then
\begin{compactenum}[\upshape(i)]
\item\label{l:sequi:totbound} $\varrho( z_{\beta,\tau},z_{\beta',\tau'} ) \lesssim |((\beta-\beta')^\top,\tau-\tau')|^{(q-2)/(2q)}$ for all $\beta,\beta'\in\Beta$ and $\tau,\tau'\in\Tau$, and

%\item \label{l:sequi:totmetric}  $(\Beta\times\Tau, \varrho)$ is a totally bounded metric space, and

\item\label{l:sequi:asy} $z\mapsto \ep_n z$ is $\varrho$-stochastically equicontinuous on $\mathcal{Z}$, i.e., for all $\varepsilon,\eta >0$, there is a $\delta > 0$ such that $$ \limsup_{n\to\infty} \prob\biggl(\sup_{\varrho( z_{\beta,\tau}, z_{\beta',\tau'} )<\delta} |\ep_n z_{\beta,\tau}-\ep_n z_{\beta',\tau'} | > \eta\biggr) < \varepsilon. $$
\end{compactenum}
\end{lemma}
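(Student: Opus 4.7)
\textbf{Proof plan for Lemma \ref{l:sequi}.}

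For part (i), the plan is to exploit the decomposition
$$ \psi_{\tau}(Y_{ik}-X_{ik}^\top\beta)-\psi_{\tau'}(Y_{ik}-X_{ik}^\top\beta')=(\tau-\tau')-A_{ik}(\beta,\beta'),\qquad A_{ik}(\beta,\beta'):=1\{Y_{ik}<X_{ik}^\top\beta\}-1\{Y_{ik}<X_{ik}^\top\beta'\}, $$
so that $\pi_j\circ z_{\beta,\tau}-\pi_j\circ z_{\beta',\tau'}=\sum_{k=1}^{c_{\max}}\Delta_{ik}X_{ikj}$ with $\Delta_{ik}:=(\tau-\tau')-A_{ik}(\beta,\beta')$. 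Expanding the square gives a finite double sum in $k,l$, and after Cauchy--Schwarz it suffices to bound $\ev\Delta_{ik}^2X_{ikj}^2$ uniformly. The $(\tau-\tau')^2$ contribution is controlled by $(\tau-\tau')^2\ev X_{ikj}^2\lesssim|\tau-\tau'|^{(q-2)/q}$ using $|\tau-\tau'|\leq 1$ and Assumption \ref{as:smooth}\eqref{as:smooth:mom}. For the indicator contribution I will use $|A_{ik}|\in\{0,1\}$ together with Hölder's inequality with conjugate exponents $q/(q-2)$ and $q/2$: conditioning on $X_{ik}$ and invoking the uniform density bound in Assumption \ref{as:smooth}\eqref{as:smooth:den} gives $\ev|A_{ik}|\lesssim\ev|X_{ik}||\beta-\beta'|\lesssim|\beta-\beta'|$, while $\ev X_{ikj}^{q}$ is uniformly bounded, producing $\ev A_{ik}^2X_{ikj}^2\lesssim|\beta-\beta'|^{(q-2)/q}$. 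Summing over the finitely many pairs $(k,l)\leq c_{\max}^2$, taking square roots, and absorbing into the exponent yields the claimed modulus.

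For part (ii), the plan is to combine the modulus estimate from (i) with a maximal inequality for manageable empirical processes over independent, non-identically distributed data. The indicator class $\{(y,x)\mapsto 1\{y_k<x_k^\top\beta\}:\beta\in\Beta\}$ is VC (its subgraphs are a VC class of halfspaces), so $\{\psi_\tau(y_k-x_k^\top\beta):\beta\in\Beta,\tau\in\Tau\}$ is VC with constant envelope one; multiplying by the coordinate $x_{kj}$ and summing over the finitely many $k\leq c_{\max}$ preserves manageability in the sense of \citet{pollard1991b}. An envelope for $\mathcal{Z}$ is $F(Y_i,X_i)=c_{\max}\max_{k\leq c_{\max}}|X_{ik}|$, which satisfies $\sup_n\mean_n\ev F^{q}<\infty$ with $q>2$ by Assumption \ref{as:smooth}\eqref{as:smooth:mom}, and measurability issues are handled via the almost measurable Suslin property established in Lemma \ref{l:suslin}. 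Pollard's maximal inequality for manageable triangular arrays (as adapted to the inid cluster setting by \citet{kosorok2003}) then produces
$$ \ev\sup_{\{(z,z')\in\mathcal{Z}\times\mathcal{Z}:\sup_n(\mean_n\ev(z-z')^2)^{1/2}<\delta\}}|\ep_nz-\ep_nz'|\longrightarrow 0\qquad\text{as }\delta\to 0, $$
uniformly in $n$. Since $\varrho$ is the componentwise maximum of the intrinsic pseudo-metrics appearing inside the supremum, a $\varrho$-ball is contained in the corresponding intersection of intrinsic-metric balls, and the conclusion is equivalent to the stated equicontinuity statement by Markov's inequality.

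The main obstacle is part (ii): standard Donsker/equicontinuity results for iid data do not directly apply to the heterogeneous cluster setting, so the argument must ride on the inid maximal inequalities of \citet{pollard1991b} and \citet{kosorok2003}. The key preparatory steps that make these results applicable are (a) the verification of manageability for the product/sum of a VC indicator class and a coordinate projection, (b) the uniform envelope moment bound from Assumption \ref{as:smooth}\eqref{as:smooth:mom}, and (c) the almost measurable Suslin property of Lemma \ref{l:suslin} to circumvent the non-measurability of the supremum. Part (i) is then the mechanism that translates the conclusion from the intrinsic $L^2$-pseudometric of the maximal inequality to the $\varrho$-pseudometric used throughout the rest of the paper.
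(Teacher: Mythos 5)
Your proposal is correct and follows essentially the same route as the paper: part (i) uses the identity $\psi_{\tau'}=\tau'-\tau+\psi_\tau$, the indicator comparison bound, H\"older with exponents $q/2$ and $q/(q-2)$, and the uniform density bound to obtain the exponent $(q-2)/(2q)$; part (ii) rests on the VC-subgraph/finite-uniform-entropy structure of $\mathcal{Z}$, the $L_q$-integrable envelope, the almost measurable Suslin property from Lemma \ref{l:suslin}, and the inid stochastic equicontinuity results of \citet{pollard1991b} and \citet{kosorok2003}. The only cosmetic differences are that the paper handles the sum over $k$ via Jensen and the Lo\`eve $c_r$ inequality rather than expanding the square, and cites the equicontinuity conclusion of the functional CLT directly rather than the underlying maximal inequality.
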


%Members of this class can again be understood as maps with domain $(\mathbb{R}^{c_{\max}}\times\mathbb{R}^{d\times c_{\max}})^2$ by filling up smaller clusters with zeros. %Define a pseudometric $\sigma$ by $$ \sigma(\beta_1,\tau_1,\beta_2,\tau_2)(\beta_1',\tau_1',\beta_2',\tau_2') = \max_{j,h\in\{1,\dots, d\}} \sup_{n\geq 1} \mean_n \ev|\pi_{jh}\circ g_{\beta_1,\tau_1,\beta_2,\tau_2}-\pi_{jh}\circ g_{\beta_1',\tau_1',\beta_2',\tau_2'}|. $$
\begin{lemma}\label{l:sigmaequi}
If Assumptions \ref{as:data}-\ref{as:smooth} hold, then $|\mean_n (g - \ev g)|_\mathcal{G}\pto^{\prob} 0$.
%\begin{compactenum}[\upshape(i)]
%\item\label{l:sigmaequi:totbound} for all $\tau_1,\tau_2,\tau_1',\tau_2'\in\Tau$ and $\beta_1,\beta_2,\beta_1',\beta_2'\in\Beta$, $$\sigma(\beta_1,\tau_1,\beta_2,\tau_2)(\beta_1',\tau_1',\beta_2',\tau_2')  \lesssim \bigl|\bigl((\beta_1-\beta_1')^\top, \tau_1-\tau_1', (\beta_2-\beta_2')^\top, \tau_2-\tau_2'\bigr)\bigr|^{(q-2)/(2q)},$$

%\item\label{l:sigmaequi:asy} 
%is $\sigma$-stochastically equicontinuous on $\mathcal{G}$.%, i.e., for all $\varepsilon,\eta >0$, there is a $\delta > 0$ such that $$ \limsup_{n\to\infty} \prob\biggl(\sup_{\sigma(\beta_1,\tau_1,\beta_2,\tau_2)(\beta_1',\tau_1',\beta_2',\tau_2')<\delta} |\mean_n g_{\beta_1,\tau_1,\beta_2,\tau_2}-  \mean_n g_{\beta_1',\tau_1',\beta_2',\tau_2'} | > \eta\biggr) < \varepsilon. $$
%\end{compactenum}
\end{lemma}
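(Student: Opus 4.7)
\emph{Approach.} The statement is a uniform weak law of large numbers for the class $\mathcal{G}$ over the compact index set $\Theta := \Beta\times\Tau\times\Beta\times\Tau$. The plan is to exploit the scaling identity $|\mean_n(g - \ev g)|_\mathcal{G} = n^{-1/2}|\ep_n g|_\mathcal{G}$: any bound of the form $\sup_n\ev|\ep_n g|_\mathcal{G}^p<\infty$ for some $p>0$ would, via Markov's inequality, deliver $|\mean_n(g - \ev g)|_\mathcal{G} = O_{\prob}(n^{-1/2})= o_{\prob}(1)$. In short, the goal is to produce the analogue of Lemma \ref{l:maxineq}\eqref{l:maxineq_iv} for the product class $\mathcal{G}$ in place of $\mathcal{Z}$.

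\emph{Key steps.} I would proceed entrywise, focusing on $\pi_{jh}\circ g_{\beta_1,\tau_1,\beta_2,\tau_2}$, which has envelope $G_i := \sum_{k,l=1}^{c_{\max}}|X_{ikj}X_{ilh}|$. Assumption \ref{as:smooth}\eqref{as:smooth:mom} combined with Cauchy--Schwarz gives $\sup_{i\geq 1}\ev G_i^{q/2}<\infty$. I would then (a)~verify that the indicator subclass $\{(y,x)\mapsto 1\{y_k - x_k^\top\beta < 0\}:\beta\in\Beta\}$ is VC-subgraph with VC-index depending only on $d$, and that augmenting by the additive Lipschitz parameter $\tau$ preserves the VC-type property; (b)~invoke stability of VC-type classes under products and multiplication by the fixed weights $X_{ikj}X_{ilh}$ to obtain polynomial uniform entropy for $\mathcal{G}$ with envelope $G_i$; (c)~truncate by writing $g_\theta = g_\theta^{(M)} + (g_\theta - g_\theta^{(M)})$ with $g_\theta^{(M)}:=g_\theta\cdot 1\{G_i \leq M\}$ so that the truncated class has envelope bounded by $M$, then apply symmetrization and the maximal inequality of Theorem~2.14.1 of \citet{vandervaartwellner1996} (used also in the proof of Lemma \ref{l:maxineq}\eqref{l:maxineq_iv}) to obtain $\sup_n\ev|\ep_n g^{(M)}|_\mathcal{G}\leq C_M < \infty$; (d)~handle the residual by $\sup_n 2\mean_n\ev G_i 1\{G_i>M\}$, which tends to $0$ as $M\to\infty$ by uniform integrability of $\{G_i\}$ (a consequence of $\sup_i\ev G_i^{q/2}<\infty$ with $q/2 > 1$). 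Measurability is handled by an extension of Lemma \ref{l:suslin} to $\mathcal{G}$, which is immediate from continuity of $\theta\mapsto g_\theta(Y_i,X_i)$ off a Lebesgue-null subset of $\Theta$.

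\emph{Main obstacle.} The bottleneck is the low moment assumption on the covariates: Assumption \ref{as:smooth}\eqref{as:smooth:mom} supplies only $\ev G_i^{q/2}<\infty$ with $q/2>1$, possibly less than $2$ when $q\in(2,4]$. Standard chaining maximal inequalities for VC-type classes are phrased in terms of an $L_2(Q)$ envelope norm, which need not be finite in this regime. The truncation-plus-residual scheme in steps (c)--(d) is what converts the $q/2 > 1$ moment into enough integrability to close the argument. For the qualitative $o_{\prob}(1)$ conclusion it suffices to let $n\to\infty$ first for each fixed $M$ and then take $M\to\infty$, so no careful rate matching of $M$ and $n$ is needed; the entire subtlety is in absorbing the heavy-tailed weights $X_{ik}X_{il}^\top$ without appealing to a second moment on the envelope.
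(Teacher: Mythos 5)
Your proposal is correct, but it reaches the conclusion by a different route than the paper. The paper does not prove a maximal inequality for $\mathcal{G}$ at all: after noting (exactly as you do) that finite sums of products of the VC-subgraph functions $(y,x)\mapsto\pi_j(x)\psi_\tau(y-x^\top\beta)$ have uniform entropy numbers of VC type once the envelope is enlarged to $4\sum_{k,l}|\pi_j(X_{ik})\pi_h(X_{il})|$, it verifies the two hypotheses of Pollard's off-the-shelf uniform law of large numbers for manageable triangular arrays of independent non-identically distributed processes (Theorem 8.2 of \citealp{pollard1991b}) --- a first-moment condition on the envelope, which holds since Assumption \ref{as:smooth}\eqref{as:smooth:mom} and Cauchy--Schwarz give $q/2>1$ moments, and a packing-number condition, which follows from the VC-type entropy --- together with the almost-measurable-Suslin property needed for symmetrization. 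Pollard's theorem has your truncation argument built into its proof, so the paper gets the conclusion in one stroke; your steps (c)--(d) essentially reconstruct that internal truncation by hand, combining the \citet{vandervaartwellner1996} maximal inequality on the $M$-truncated class with uniform integrability of the envelope (which is where $q/2>1$ enters), and then sending $n\to\infty$ before $M\to\infty$. Both routes rest on the same entropy input and the same first-moment-only envelope condition, so your diagnosis of the "main obstacle" is exactly right; your version is longer but self-contained and makes explicit that the truncated part decays at rate $n^{-1/2}$. One small caution: your measurability remark is too quick, since $\theta\mapsto g_\theta(Y_i,X_i)$ is not continuous (it jumps through the crossing indicators); the correct fix is the paper's, namely to bound $\mean_n(\pi_{jh}\circ g_\theta-\pi_{jh}\circ g_{\theta'})^2$ by indicator terms of the form $1\{|Y_{ik}-X_{ik}^\top\beta|<|X_{ik}|\,|\beta-\beta'|\}$ and repeat the infimum-over-rationals argument of Lemma \ref{l:suslin} to get the almost measurable Suslin property. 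This is a cosmetic repair, not a gap in the substance of your argument.
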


\section{Proofs}\label{sec:proofs}

\begin{proof}[Proof of Lemma \ref{l:suslin}]
To verify the almost measurable Suslin property for $m_{\beta(\tau),\tau}$, I start by establishing two useful facts: First, because $|1\{a< b\} - 1\{a< c\}|\leq 1\{|a-b|< |b-c|\}$ for $a,b,c\in\mathbb{R}$,  each realization of $Y_{ik}$ and $X_{ik}$ satisfies
$$ | 1\{ Y_{ik} < X_{ik}^\top \beta(\tau) \} - 1\{ Y_{ik} < X_{ik}^\top \beta(\tau') \} | \leq 1\{ |Y_{ik} - X_{ik}^\top \beta(\tau)| < |X_{ik}||\beta(\tau)-\beta(\tau')| \}.$$
Second, the eigenvalues of $J_n(\tau)$ are bounded away from zero uniformly in $\tau$ and $n$ by Assumption \ref{as:smooth}. The same assumption and the inverse function theorem applied to $\sum_{i=1}^n\sum_{k=1}^{c_i} \ev (\tau - 1\{Y_{ik} < X_{ik}^\top \beta(\tau)\})X_{ik} = 0$ then give $d\beta(\tau)/d\tau = J_n^{-1}(\tau)n^{-1}\sum_{i=1}^n\sum_{k=1}^{c_i}\ev X_{ik}$. Because the Frobenius norm is also the 2-Schatten norm, we have $|J_{n}^{-1}(\tau)| \leq \sqrt{d\lambda_{\min}(J_{n}(\tau))^{-1}}$ and hence $d\beta(\tau)/d\tau$ is bounded uniformly in $\tau\in\Tau$ by some $C > 0$. The mean-value theorem yields
\begin{equation}\label{eq:betalip}
|\beta(\tau)-\beta(\tau')|\leq  C |\tau-\tau'|, \qquad \tau,\tau'\in\Tau.
\end{equation}

For each $\tau\in\Tau$, combine the preceding two displays with the Lo\`eve $c_r$ inequality to obtain ($\mathbb{Q}$ here is the set of rationals)
\begin{align*}
&\inf_{\tau'\in\Tau\cap\mathbb{Q}} \mean_n( \pi_j \circ z_{\beta(\tau),\tau} - \pi_j \circ z_{\beta(\tau'),\tau'} )^2\\ &\qquad \lesssim \inf_{\tau'\in\Tau\cap\mathbb{Q}} \frac{1}{n} \sum_{i=1}^n\sum_{k=1}^{c_i} \bigl((\tau-\tau')^2 + 1\{ |Y_{ik} - X_{ik}^\top \beta(\tau)| < |X_{ik}|C |\tau-\tau'| \}\bigr) \pi_j(X_{ik})^2.
\end{align*}
The infimum on the right must be smaller than $n^{-1}\sum_{i=1}^n\sum_{k=1}^{c_i} (\delta^2 + 1\{ |Y_{ik} - X_{ik}^\top \beta(\tau)| < |X_{ik}|C\delta \})\pi_j(X_{ik})^2$ for every $\delta > 0$. Conclude that the infimum is $n^{-1}\sum_{i=1}^n\sum_{k=1}^{c_i}1\{ |Y_{ik} - X_{ik}^\top \beta(\tau)| < 0 \}\pi_j(X_{ik})^2 = 0$. This does not change if we take suprema over $\tau\in\Tau$ on both sides of the display. If follows that
$$ \prob \biggl( \sup_{\tau\in\Tau} \inf_{\tau'\in\Tau\cap\mathbb{Q}} \mean_n( \pi_j \circ z_{\beta(\tau),\tau} - \pi_j \circ z_{\beta(\tau'),\tau'} )^2 >0 \biggr) = 0, \qquad 1\leq j\leq d,$$ which makes $\{ z_{\beta(\tau),\tau} : \tau\in\Tau\}$ almost measurable Suslin by Lemma 2 of \citet{kosorok2003}. Nearly identical calculations verify the same property for $\{ z_{\beta,\tau} : \beta\in\Beta, \tau\in\Tau\}$. Because
\begin{align*}
&\rho_\tau(y-x^\top\beta) - \rho_\tau\bigl(y-x^\top\beta(\tau)\bigr) - \bigl(\rho_{\tau'}(y-x^\top\beta') - \rho_{\tau'}\bigl(y-x^\top\beta(\tau')\bigr)\bigr)\\
&\qquad \leq |x|\bigl(|\tau-\tau'|\bigl(\beta - \beta(\tau)\bigr) + |\beta - \beta'| + |\beta(\tau) - \beta(\tau')|\bigr),
\end{align*}
the process $\{ m_{\beta,\tau}-m_{\beta(\tau),\tau} : \beta\in\Beta, \tau\in\Tau\}$ is almost measurable Suslin as well.
\end{proof}

\begin{proof}[Proof of Lemma \ref{l:maxineq}]
\begin{inparaenum}
\item[\eqref{l:maxineq_i}]  The subgraph of a real-valued function $f$ is defined as $\{ (x,t): f(x) > t \}$. Write the subgraph of $\rho(a - b) - \rho(a)$ as
\begin{align*}
&(\{ a \geq 0 \}\cap \{a\geq b\}\cap\{ b < -t/\tau \})\cup (\{a \geq 0 \}\cap \{ a < b \}\cap\{ (1-\tau)b - a > t \})\\
&\quad \cup(\{ a < 0 \}\cap \{a < b\}\cap\{ a-b < t/(\tau-1) \})\cup (\{a < 0\}\cap \{ a \geq b \}\cap\{ a-\tau b > t \}).
\end{align*}
Now take $a = y - x^\top\beta(\tau)$ and $b  = x^\top (\beta - \beta(\tau))$ so that, e.g., $\{ a > 0\} = \{ (y,x) :  y - x^\top\beta(\tau) > 0\}$. 
Hence,
\begin{align*}
&\{ (1-\tau)b - a > t \} = \bigl\{ (y,x) : y - x^\top \bigl(\tau\beta(\tau) + (1-\tau)\beta\bigr) < - t \bigr\}\quad\text{and}\\
&\{ a-\tau b > t \} = \bigl\{ (y,x) : y - x^\top \bigl(\tau\beta + (1-\tau)\beta(\tau)\bigr) > t \bigr\}.
\end{align*}
By convexity of $\Beta$, the two collections of sets in the display, indexed by $\beta\in\Beta$, $\tau\in\Tau$, and $t\in\mathbb{R}$, are contained in the collection of sets $\{(y,x) : y-x^\top\beta < -t \}$ and $\{(y,x) : y-x^\top\beta > t \}$, respectively, indexed by $\beta\in\Beta$ and $t\in\mathbb{R}$.

The collection of sets $\{ v\in\mathbb{R}^{d+2} : v^\top \lambda \leq 0 \}$ indexed by $\lambda\in\mathbb{R}^{d+2}$ is a Vapnik--\u{C}hervonenkis (VC) class of sets \citep[see][Problem 2.6.14]{vandervaartwellner1996}; the same holds for the collection $\{ v\in\mathbb{R}^{d+2} : v^\top \lambda > 0 \}$ by Lemma 2.6.17(i) of \citeauthor{vandervaartwellner1996}. Because $\Beta\subset \mathbb{R}^d$, each individual set in the subgraph above indexed by $\beta\in\Beta$, $\tau\in\Tau$, and $t\in\mathbb{R}$ is contained in one of these two VC classes. Subclasses of VC classes are VC classes themselves. Conclude from \citeauthor{vandervaartwellner1996}'s Lemma 2.6.17(ii) and (iii) that the subgraph above is a VC class. Therefore the map
\begin{align*}
\rho_\tau(y-x^\top\beta) - \rho_\tau\bigl(y-x^\top\beta(\tau)\bigr)
\end{align*}
indexed by $\beta$ and $\tau$ is a VC subgraph class. %As such, it satisfies \citeauthor{pollard1982}'s (\citeyear{pollard1982}) uniform entropy condition; see also \citet[p.~239, with the condition being $J(1,\mathcal{F}) < \infty$ in their notation]{vandervaartwellner1996}. 
Sums of functions from VC subgraph classes do not necessarily form VC subgraph classes, but their uniform entropy numbers behave like those of VC subgraph classes if the envelopes are increased accordingly \citep[p.\ 157]{kosorok2008}. Because the absolute value of the preceding display is bounded above by $|x^\top(\beta-\beta(\tau))|$, we can use $\sum_{k=1}^{c_{\max}}|X_{ik}|\delta$ as an envelope for $m(Y_i,X_i)$ with $m\in\mathcal{M}_\delta$. The class $\mathcal{M}_\delta$ then has a finite uniform entropy integral in the sense of \citet[condition (2.5.1), p.\ 127]{vandervaartwellner1996}.

Use \citeauthor{vandervaartwellner1996}'s (\citeyear{vandervaartwellner1996}) Theorem 2.14.1 (which applies to inid observations if the reference to their Lemma 2.3.1 is replaced by a reference to their Lemma 2.3.6 and my Lemma \ref{l:suslin} is used to justify their symmetrization argument) to deduce that $$ \ev |\ep_n m|^q_{\mathcal{M}_\delta} \lesssim \frac{1}{n}\sum_{i=1}^n\ev\biggl|\sum_{k=1}^{c_{\max}} | X_{ik}|\delta \biggr|^q  \lesssim \delta^q \sup_{i,k} \ev|X_{ik}|^q.$$
The right-hand side is finite by assumption, which completes the proof.

%\item[\eqref{l:maxineq_iii}] This follows from \eqref{l:maxineq_i} with $\delta = \diam B $.

\item[\eqref{l:maxineq_ii}] By the Lo\`eve $c_r$ inequality, it suffices to show that each of the $d$ elements of $\mathbb{W}_n$ has the desired property.  Arguments similar to the ones given in the first part of the proof establish that the collection of functions $(y,x,w) \mapsto w\pi_j(x) \psi_{\tau}(y-x^\top \beta)$ indexed by $(\beta,\tau)\in\Beta\times\Tau$ is a VC subgraph class. As such, it satisfies \citeauthor{pollard1982}'s  (\citeyear{pollard1982}) uniform entropy condition. By Theorem 3 of \citet[p.\ 2273]{andrews1994}, the class of functions with finite uniform entropy is stable under addition as long as the envelope function is increased accordingly. An appropriate envelope for the set of functions $\sum_{k=1}^{c_{\max}}W_i\pi_j(X_{ik})\psi_{\tau}(Y_{ik} - X_{ik}^\top\beta)$ indexed by $(\beta,\tau)\in{\Beta\times\Tau}$ is
%\begin{align}
$F_j(W_i,X_{i}) = 2|W_i|\sum_{k=1}^{c_{\max}}|\pi_j(X_{ik})|.$
%\end{align}
In addition, the components of $\mathbb{W}_n(\beta,\tau)$ are almost measurable Suslin by Lemma \ref{l:suslin} and the bound
\begin{align*}
&\frac{1}{\sqrt{n}}\sum_{i=1}^nW_i^2 \biggl(\sum_{k=1}^{c_{\max}}\bigl(\psi_{\tau}(Y_{ik} - X_{ik}^\top\beta) - \psi_{\tau'}(Y_{ik} - X_{ik}^\top\beta')\bigr)\pi_j(X_{ik})\biggr)^2\\
&\qquad \leq \sqrt{n} \max_{1\leq i\leq n} W_i^2 \mean_n( \pi_j \circ z_{\beta,\tau} - \pi_j \circ z_{\beta',\tau'} )^2.
\end{align*}
Conclude from \citeauthor{vandervaartwellner1996}'s (\citeyear{vandervaartwellner1996}) Theorem 2.14.1 (see part \eqref{l:maxineq_i} of the proof above) and independence of the bootstrap weights from the data that
\begin{align*}
\ev |\pi_j \circ \mathbb{W}_n(\beta,\tau)|^q_{\Beta\times\Tau} \lesssim \ev \mean_n F_j^q \lesssim \ev|W|^q \sup_{i,k}\ev|X_{ik}|^q,
\end{align*}
which is finite by assumption.

\item[\eqref{l:maxineq_iv}] This follows from \eqref{l:maxineq_ii} with $W \equiv 1$.
\end{inparaenum}
\end{proof}

\begin{proof}[Proof of Lemma \ref{l:lowerbound}]
By a Taylor expansion about $\beta(\tau)$,
$$ M_n(\beta,\tau) - M_n(\beta(\tau),\tau) = M_n'(\beta(\tau),\tau)\bigl( \beta-\beta(\tau) \bigr) + \bigl( \beta-\beta(\tau) \bigr)^\top I_n(\beta,\tau)\bigl( \beta-\beta(\tau) \bigr)/2. $$ The first term on the right is zero because $\beta = \beta(\tau)$ minimizes $M_n(\beta,\tau)$. By the properties of Rayleigh quotients, the second term on the right is at least as large as $\lambda_{\min}(I_n(\beta,\tau))|\beta-\beta(\tau)|^2/2$. Assumption \ref{as:smooth} implies that $\lambda_{\min}(H_n(\beta))$ is bounded away from zero uniformly in $\beta$ and $n$. For every non-zero $a\in\mathbb{R}^d$, we must have $a^\top I_n(\beta,\tau)a \geq \inf_{\beta,n} a^\top H_n(\beta) a$ and therefore $\lambda_{\min}(I_n(\beta,\tau))$ is bounded away from zero as well, uniformly in $\beta$, $\tau$, and $n$.
\end{proof}

\begin{proof}[Proof of Lemma \ref{l:uniint}]
This proof uses a (non-trivial) modification of the strategy of proof used by \citet{kato2011}. Without loss of generality, take $p\geq 2$. Because $\Beta$ is bounded, for every $\varepsilon > 0$ there exists some ${\tau}_n^*\in\Tau$ such that $|\sqrt{n}( \hat{\beta}^*_n(\tau) - \beta(\tau))|_\Tau$ and $|\sqrt{n}( \hat{\beta}^*_n(\tau^*_n) - \beta(\tau^*_n))|$ differ at most by $\varepsilon$. Choose $\varepsilon < 2$. For every nonnegative integer $N$, the inequality $1\{a + b > c\}\leq 1\{2a > c\}+1\{2b > c\}$ with $a,b,c\in\mathbb{R}$ then yields
\begin{align*}
\prob \Bigl( \bigl|\sqrt{n}\bigl( \hat{\beta}^*_n(\tau) - \beta(\tau)\bigr)\bigr|_\Tau > 2^{N+1} \Bigr) &\leq \prob \Bigl( \varepsilon + \bigl|\sqrt{n}\bigl( \hat{\beta}^*_n(\tau^*_n) - \beta(\tau^*_n)\bigr)\bigr| > 2^{N+1} \Bigr)\\
&\leq \prob \Bigl( \bigl|\sqrt{n}\bigl( \hat{\beta}^*_n(\tau^*_n) - \beta(\tau^*_n)\bigr)\bigr| > 2^N \Bigr).
\end{align*}

Define shells $S_{jn} = \{ (\beta,\tau) \in \Beta\times\Tau : 2^{j-1} < \sqrt{n}|\beta - \beta(\tau)|\leq 2^j\}$ for integer $j\geq 1$. If the event in the second line of the preceding display occurs, then there exists some $j\geq N$ such that $(\hat{\beta}^*_n(\tau^*_n), \tau^*_n)\in S_{jn}$. Because $\hat{\beta}^*_n(\tau)$ minimizes $\mathbb{M}^*_{n}(\beta,\tau)$ for every $\tau\in\Tau$, including ${\tau}_n^*$, this implies
$\inf_{(\beta,\tau)\in S_{jn}} \mathbb{M}^*_{n}(\beta,\tau) - \mathbb{M}^*_{n}(\beta(\tau),\tau) \leq 0$. The union bound then gives
\begin{align}\label{eq:minbound}
\begin{split}
&\prob \Bigl( \bigl|\sqrt{n}\bigl( \hat{\beta}^*_n(\tau^*_n) - \beta(\tau^*_n)\bigr)\bigr| > 2^N \Bigr)\\ &\qquad\leq \sum_{j\geq N} \prob\biggl( \inf_{(\beta,\tau)\in S_{jn}} \mathbb{M}^*_{n}(\beta,\tau) - \mathbb{M}^*_{n}(\beta(\tau),\tau) \leq 0 \biggr).
\end{split}
\end{align}

Add and subtract to decompose $\mathbb{M}^*_{n}(\beta,\tau) - \mathbb{M}^*_{n}(\beta(\tau),\tau)$ into
\begin{align*}
{M}_{n}(\beta,\tau) - {M}_{n}(\beta(\tau),\tau) + \mathbb{G}_n(m_{\beta,\tau}-m_{\beta(\tau),\tau})/\sqrt{n} + \mathbb{W}_n(\tau)^\top\bigl(\beta-\beta(\tau)\bigr)\sqrt{n}.
%\\
%&\quad+\mathbb{M}_{n}(\beta,\tau) - \mathbb{M}_{n}(\beta(\tau),\tau) - \bigl({M}_{n}(\beta,\tau) - {M}_{n}(\beta(\tau),\tau)\bigr)\\ 
%&\quad+\mathbb{M}^*_{n}(\beta,\tau) - \mathbb{M}^*_{n}(\beta(\tau),\tau) - (\mathbb{M}_{n}(\beta,\tau) - \mathbb{M}_{n}(\beta(\tau),\tau))
\end{align*}
By Lemma \ref{l:lowerbound}, ${M}_{n}(\beta,\tau) - {M}_{n}(\beta(\tau),\tau)\geq c|\beta - \beta(\tau)|^2\geq c 2^{2j-2}/n$ on $S_{jn}$ for some $c>0$. For each $j$, %the events on the right-hand side of \eqref{eq:minbound} are therefore included in the events
we therefore have the inclusion
\begin{align*}
&\biggl\{ \inf_{(\beta,\tau)\in S_{jn}} \mathbb{M}^*_{n}(\beta,\tau) - \mathbb{M}^*_{n}(\beta(\tau),\tau) \leq 0 \biggr\}\\
&\qquad \subset \Bigl\{\bigl|\mathbb{G}_n(m_{\beta,\tau}-m_{\beta(\tau),\tau})\bigr|_{S_{jn}} + \bigl|\mathbb{W}_n(\tau)^\top\bigl(\beta-\beta(\tau)\bigr)\bigr|_{S_{jn}} \geq c 2^{2j-2}/\sqrt{n}\Bigr\}.
\end{align*}
Similarly, $|\mathbb{W}_n(\tau)^\top(\beta-\beta(\tau))|\leq |\mathbb{W}_n(\tau)|2^j/\sqrt{n}$ on $S_{jn}$ by the Cauchy--Schwarz inequality. This can be bounded further by the supremum of $|\mathbb{W}_n(\beta,\tau)|2^j/\sqrt{n}$ over $\Beta\times\Tau$. After slightly decreasing $c$, conclude that the right-hand side of \eqref{eq:minbound} is at most
\begin{equation}\label{eq:emprobound}
\sum_{j\geq N} \prob\Bigl( \bigl|\mathbb{G}_n(m_{\beta,\tau}-m_{\beta(\tau),\tau})\bigr|_{S_{jn}} \geq c 2^{2j}/\sqrt{n} \Bigr) + \sum_{j\geq N} \prob\Bigl(|\mathbb{W}_n(\beta,\tau)|_{\Beta\times\Tau} \geq c 2^{j} \Bigr).
\end{equation}
%The goal is now to bound these two terms via the Markov inequality and tail bounds for empirical processes. 

Consider the supremum inside the first term. For $\delta_{jn} = 2^j/\sqrt{n}$, the supremum over $S_{jn}$ does not exceed the supremum over $\mathcal{M}_{\delta_{jn}}$. Together with Lemma \ref{l:maxineq}\eqref{l:maxineq_i} this yields
\begin{align*}
\ev |\mathbb{G}_n(m_{\beta,\tau}-m_{\beta(\tau),\tau})|^q_{S_{jn}} \lesssim (2^{j}/\sqrt{n})^q.
\end{align*}
The supremum inside the second term in \eqref{eq:emprobound} satisfies $\sup_{n\geq 1}\ev |\mathbb{W}_n(\beta,\tau)|^q_{\Beta\times\Tau} < \infty$ by Lemma \ref{l:maxineq}\eqref{l:maxineq_ii}. Combine these results with the Markov inequality to see that \eqref{eq:emprobound} is bounded by a constant multiple of $\sum_{j\geq N} 2^{-qj}\lesssim 2^{-qN}$, uniformly in $n$. Take $N = \lfloor\log_2 t\rfloor$ for $t\geq 2$ and conclude from the bounds developed so far that
\begin{align*}
\prob \Bigl( \bigl|\sqrt{n}\bigl( \hat{\beta}^*_n(\tau) - \beta(\tau)\bigr)\bigr|_\Tau > 2t \Bigr) \lesssim t^{-q}.
\end{align*}
The case $0<t<2$ can be absorbed into a constant. The preceding display is then valid for all $t>0$. Tonelli's theorem and Lemma 1.2.2 of \citet{vandervaartwellner1996} now give %the representation
\begin{align*}
\ev\bigl|\sqrt{n}\bigl( \hat{\beta}^*_n(\tau) - {\beta}(\tau)\bigr)\bigr|^{p}_\Tau = 2^pp \int_{0}^\infty t^{p-1} \prob \Bigl( \bigl|\sqrt{n}\bigl( \hat{\beta}^*_n(\tau) - \beta(\tau)\bigr)\bigr|_\Tau > 2t \Bigr) \, dt,
\end{align*}
which is finite as long as $p<q$. 

A simpler, nearly identical argument establishes $\ev|\sqrt{n}( \hat{\beta}_n(\tau) - {\beta}(\tau))|^{p}_\Tau < \infty$ uniformly in $n$. The Lo\`eve $c_r$ inequality completes the proof.
%By construction, $\ev( \mathbb{M}^*_{n}(\beta,\tau) \mid \mathcal{D}_n) = \mathbb{M}_{n}(\beta,\tau)$ because $\mathbb{W}_n(\tau)$ has mean $0$ conditional on the data $\mathcal{D}_n$. 
\end{proof}

\begin{proof}[Proof of Lemma \ref{l:jbound}]
Use sub-additivity of the Frobenius norm, Assumptions \ref{as:smooth}\eqref{as:smooth:den} and \eqref{as:smooth:der}, and the mean-value theorem to write $$ |I_{n}(\beta,\tau) - J_{n}(\tau)| \lesssim \frac{1}{n} \sum_{i=1}^{n} \sum_{k=1}^{c_i} \int_0^1 t\, dt\ev \bigl|X_{ik}^\top\bigl( \beta -\beta(\tau)\bigr)\bigr||X_{ik}|^2 \lesssim |\beta-\beta(\tau)|\sup_{i,k} \ev|X_{ik}|^3 . $$ To transform this into a bound on the difference of inverses, note that the eigenvalues of $I_{n}(\beta,\tau)$ and $J_{n}(\tau)$ are bounded away from zero uniformly in $\beta$, $\tau$, and $n$ by Assumption \ref{as:smooth}.  Since $A^{-1} - B^{-1} = B^{-1}(B-A)A^{-1}$ for any two nonsingular matrices $A$ and $B$, conclude from the preceding display, sub-multiplicativity of the Frobenius norm, and Assumption \ref{as:smooth}\eqref{as:smooth:mom} (with $q\geq 3$) that $|I_{n}^{-1}(\beta,\tau) - J_{n}^{-1}(\tau)| \lesssim |I_{n}^{-1}(\beta,\tau)| |J_{n}^{-1}(\tau)| |\beta-\beta(\tau)|$. The right-hand side is finite because $|I_{n}^{-1}(\beta,\tau)| \leq \sqrt{d\lambda_{\min}(I_{n}(\beta,\tau))^{-1}}$ and $|J_{n}^{-1}(\tau)| \leq \sqrt{d\lambda_{\min}(J_{n}(\tau))^{-1}}$ due to the fact that the Frobenius norm is also the 2-Schatten norm.
\end{proof}

\begin{proof}[Proof of Lemma \ref{l:junif}]
The metric space $(\Tau,|\cdot|)$ is totally bounded because $T$ is bounded. As in the proof of Lemma \ref{l:jbound}, $|J_n(\tau)-J_n(\tau')| \lesssim |\beta(\tau)-\beta(\tau')|$. Conclude from \eqref{eq:betalip} that $J_n(\tau)$ is asymptotically uniformly equicontinuous. The pointwise convergence given in Assumption \ref{as:gstat} is therefore uniform by a version of the Arzel\`a-Ascoli theorem \citep[see, e.g.,][Theorem 21.7, p.\ 335]{davidson1994}. The result for the difference of inverses is deduced as in Lemma \ref{l:jbound}.
\end{proof}

%\begin{proof}[Proof of Lemma \ref{l:mconv}] Because $|Z_n(\tau)|_{\Tau}\pto^{\prob} 0$, we must also have $|Z_n(\tau)|_{\Tau}\leadsto 0$. Hence, $\ev \min\{|Z_n(\tau)|_{\Tau},\Delta\}\to 0$ due to boundedness and continuity of $z\mapsto \min\{z,\Delta\}$ with nonnegative $z$ and $\Delta$. By the version of the Fubini theorem given in Lemma 1.2.6 of \citet[p.\ 11]{vandervaartwellner1996}, $\ev \ev^* \min\{|Z_n(\tau)|_{\Tau},\Delta\}\leq \ev \min\{|Z_n(\tau)|_{\Tau},\Delta\}$. Conclude from the Markov inequality that the second term on the right of
%$$ \ev^*|Z_n(\tau)|_{\Tau} = \bigl(\ev^*|Z_n(\tau)|_{\Tau} - \ev^*\min\{|Z_n(\tau)|_{\Tau},\Delta\}\bigr) + \ev^*\min\{|Z_n(\tau)|_{\Tau},\Delta\}.$$
%converges to zero. The first term is bounded above by $\ev^*|Z_n(\tau)|_{\Tau}1\{ |Z_n(\tau)|_{\Tau} > \Delta \}$.
%\end{proof}

\begin{proof}[Proof of Lemma \ref{l:sequi}]
\begin{inparaenum}
\item[\eqref{l:sequi:totbound}] Use the Jensen inequality, the Lo\`eve $c_r$ inequality and $\psi_{\tau'} = \tau' - \tau + \psi_{\tau}$, Assumption \ref{as:smooth}\eqref{as:smooth:mom} and the H\"older inequality with exponents $q/2$ and $q/(q-2)$, the fact that $|1\{a\leq b\} - 1\{a\leq c\}|\leq 1\{|a-b|\leq |b-c|\}$ for $a,b,c\in\mathbb{R}$, and finally Assumption \ref{as:smooth}\eqref{as:smooth:den} and the mean-value theorem to see
\begin{align*}
&\bigl( \mean_n  \ev(\pi_j\circ z_{\beta,\tau}-\pi_j\circ z_{\beta',\tau'})^2 \bigr)^{1/2}\\ 
&\quad\lesssim \mean_n \max_{1\leq k\leq c_i} \Bigl(\ev\pi_j(X_{ik})^2\bigl( \psi_\tau(Y_{ik}-X_{ik}^\top \beta)-\psi_{\tau'}(Y_{ik}-X_{ik}^\top \beta')\bigr)^2 \Bigr)^{1/2}\\
%&\quad\lesssim |\tau-\tau'| + \mean_n \max_{1\leq k\leq c_i} \Bigl(\ev\pi_j(X_{ik})^2\bigl( \psi_\tau(Y_{ik}-X_{ik}^\top \beta)-\psi_{\tau}(Y_{ik}-X_{ik}^\top \beta')\bigr)^2 \Bigr)^{1/2}\\
&\quad\lesssim |\tau-\tau'| + \mean_n \max_{1\leq k\leq c_i} \Bigl(\ev\bigl| \psi_\tau(Y_{ik}-X_{ik}^\top \beta)-\psi_{\tau}(Y_{ik}-X_{ik}^\top \beta')\bigr|^{2q/(q-2)} \Bigr)^{(q-2)/(2q)}\\
&\quad \lesssim |\tau-\tau'| + \mean_n \max_{1\leq k\leq c_i} \bigl(\ev 1\{ |Y_{ik}-X_{ik}^\top \beta| \leq |X_{ik}||\beta-\beta'| \}\bigr)^{(q-2)/(2q)}\\
%&\quad= \mean_n \max_{1\leq k\leq c_i} \Bigl(\ev F_{ik}\bigl(X_{ik}^\top \beta + |X_{ik}||\beta-\beta'|\mid X_{ik}\bigr) - \ev F_{ik}\bigl(X_{ik}^\top \beta - |X_{ik}||\beta-\beta'|\mid X_{ik}\bigr)\Bigr)^{\varepsilon/[2(1+\varepsilon)]}
&\quad\lesssim |\tau-\tau'| + \mean_n \max_{1\leq k\leq c_i} \bigl(|\beta-\beta'|\ev |X_{ik}|\bigr)^{(q-2)/(2q)}.
\end{align*}
Because $|\tau-\tau'| < 1$, we have $|\tau-\tau'|\leq |\tau-\tau'|^{(q-2)/(2q)}$. Conclude from the H\"older inequality that the extreme right-hand side of the display does not exceed a constant multiple of $|((\beta-\beta')^\top,\tau-\tau')|^{(q-2)/(2q)}$. Now take suprema over $n$ and then maxima over $1\leq j\leq d$.

%\item[\eqref{l:sequi:totmetric}] For every $\varepsilon > 0$, at most $(\diam(\Beta\times\Tau)/\varepsilon)^{d+1}$ cubes with side length $\varepsilon$ are needed to cover $\Beta\times\Tau$. Equally many $|\cdot|$-balls of radius $\varepsilon\sqrt{d+1}$ then also cover $\Beta\times\Tau$. Hence, in view of \eqref{l:sequi:totbound}, there exists some absolute constant $\Delta$ such that for every given radius $\delta$, we can pick $\varepsilon = \delta^{2q/(q-2)}/\Delta$ such that the number of $\varrho$-balls of radius $\delta$ needed to cover $\Beta\times\Tau$ is at most $(\diam(\Beta\times\Tau)/\varepsilon)^{d+1}$.

\item[\eqref{l:sequi:asy}] In view of the proof of Lemma \ref{l:maxineq}\eqref{l:maxineq_ii}, the collection of functions $(y,x) \mapsto \pi_j(x) \psi_{\tau}(y-x^\top \beta)$ indexed by $(\beta,\tau)\in\Beta\times\Tau$ is a VC subgraph class. %Hence, this must also be true for the subclass $(y,x^\top)^\top \mapsto \pi_j(x) \psi_{\tau}(y-x^\top \beta(\tau))$ indexed by $\tau\in\Tau$. 
Sums of functions with finite uniform entropy still have finite uniform entropy if the envelope is increased accordingly. An appropriate envelope for $\pi_{j}\circ z_{\beta,\tau}$ is $2\sum_{k=1}^{c_{\max}}|\pi_{j}(X_{ik})|$. Because this envelope is $L_q$-integrable, stochastic equicontinuity follows from \citeauthor{andrews1994}' (\citeyear{andrews1994}, Theorem 1, p.\ 2269) modification of \citeauthor{pollard1991b}'s (\citeyear{pollard1991b}, p. 53) functional central limit theorem; see also \citet[Theorem 1]{kosorok2003}. The process is suitably measurable because $\{ z_{\beta,\tau} : \tau\in\Tau\}$ is almost measurable Suslin by Lemma \ref{l:suslin}.
\end{inparaenum}
\end{proof}

\begin{proof}[Proof of Lemma \ref{l:sigmaequi}]
As noted in the proof of Lemma \ref{l:maxineq}\eqref{l:maxineq_ii}, the collection of functions $(y,x) \mapsto \pi_j(x) \psi_{\tau}(y-x^\top \beta)$ indexed by $\beta\in\Beta$ and $\tau\in\Tau$ is a VC subgraph class. %Hence, this must also be true for the subclass $(y,x) \mapsto \pi_j(x) \psi_{\tau}(y-x^\top \beta(\tau))$ indexed by $\tau\in\Tau$.  
Finite sums of products of such functions need not be VC subgraph, but their uniform entropy numbers behave like those of VC subgraph classes as long as the envelope is increased accordingly; see \citet[pp.\ 157-158]{kosorok2008}. An appropriate envelope for $\pi_{jh}\circ g$ is $$4\sum_{k=1}^{c_{\max}}\sum_{l=1}^{c_{\max}} |\pi_{j}(X_{ik})\pi_h(X_{il})|, \qquad j,h\in\{1,\dots,d\}.$$ 

I now verify that $\mean_n (g-\ev g)$ with $g\in\mathcal{G}$ satisfies the conditions of the uniform law of large numbers of \citet[Theorem 8.2, p.\ 39]{pollard1991b}. By the Jensen inequality, the convergence occurs if it occurs for every entry of the matrix $\mean_n (g-\ev g)$. Hence, suppose for simplicity that $d=1$; otherwise argue separately for each entry. The moment condition (i) on the envelope in \citeauthor{pollard1991b}'s theorem holds immediately by Assumption \ref{as:smooth}. The other condition involves the packing numbers or, equivalently, the covering numbers of $\mathcal{G}$; see \citet[p.\ 98]{vandervaartwellner1996} for definitions and the equivalence result. Because the uniform entropy numbers of $\mathcal{G}$ relative to $L_r$ behave like those of VC subgraph classes for all $r\geq 1$, \citeauthor{pollard1991b}'s condition (ii) is easily satisfied in view of the discussion in \citet[p.\ 125]{vandervaartwellner1996}. The uniform law of large numbers now follows if $\{g : g\in\mathcal{G}\}$ is suitably measurable to support the symmetrization argument used in \citeauthor{pollard1991b}'s proof. A sufficient condition is that $\{g : g\in\mathcal{G}\}$ is almost measurable Suslin, which follows from
\begin{align*}
&\mean_n (\pi_{jh}\circ g_{\beta_1,\tau_1,\beta_2,\tau_2}-\pi_{jh}\circ g_{\beta_1',\tau_1',\beta_2',\tau_2'})^2  \\
&\qquad \lesssim \frac{1}{n} \sum_{i=1}^n\sum_{k=1}^{c_i}\sum_{l=1}^{c_i} \bigl((\tau_1-\tau_1')^2 + 1\{ |Y_{ik} - X_{ik}^\top \beta_1| < |X_{ik}| |\beta_1-\beta_1'| \}\\
&\hspace{10em} +(\tau_2-\tau_2')^2 + 1\{ |Y_{il} - X_{il}^\top \beta_2| < |X_{il}| |\beta_2-\beta_2'| \} \bigr) \pi_{jh}(X_{ik}X_{il}^\top)^2
\end{align*}
and the argument used in the proof of Lemma \ref{l:suslin}, mutatis mutandis.
%Since $\mean_n g_{\beta(\tau),\tau,\beta(\tau'),\tau'}\pto^{\prob} \Sigma(\tau,\tau')$ for all $\tau,\tau'\in\Tau$ by the law of large numbers for independent random variables. $$\sigma_{\Tau}(\tau_1,\tau_2)(\tau_1',\tau_2') := \sigma\bigl(\beta(\tau_1),\tau_1,\beta(\tau_2),\tau_2\bigr)\bigl(\beta(\tau_1'),\tau_1',\beta(\tau_2'),\tau_2'\bigr)$$
%
%Because $\sigma_{\Tau}(\tau_1,\tau_2)(\tau_1',\tau_2')\lesssim |(\tau_1-\tau_1',\tau_2-\tau_2')|^{\varepsilon/[2(1+\varepsilon)]}$ for all $\tau_1,\tau_2,\tau_1',\tau_2'\in\Tau$, the pseudometric space $(\Tau\times\Tau,\sigma_{\Tau})$ is totally bounded. 
%
\end{proof}

\begin{proof}[Proof of Theorem \ref{th:bootse}]
\begin{inparaenum}
\item[\eqref{th:bootse_pw}] Recall that $\{\mathbb{Z}(\tau):\tau\in\Tau\}$ is the $d$-dimensional Gaussian process described in Theorems \ref{th:clt} and \ref{th:bootclt}.  Write $Z_n^*(\tau) := \sqrt{n}( \hat{\beta}^*_n(\tau) - \hat{\beta}_n(\tau))$. Because $\ev^* Z_n^*(\tau)Z_n^*(\tau')^\top$ converges in probability to $\ev \mathbb{Z}(\tau)\mathbb{Z}(\tau')^\top$ if and only if each coordinate converges, assume for simplicity that $d=1$; otherwise, treat each coordinate individually. Then, by Theorem \ref{th:bootclt}, $(Z_n^*(\tau),Z_n^*(\tau')) \leadsto (\mathbb{Z}(\tau),\mathbb{Z}(\tau'))$ in probability in $\mathbb{R}^{2}$ at every $\tau,\tau'\in\Tau$. By arguing along subsequences, conclude from the portmanteau lemma \citep[see, e.g.,][Lemma 2.2, p.\ 6]{vandervaart1998} that in the following we can use the fact that $\ev^* f(Z_n^*(\tau),Z_n^*(\tau'))\pto^{\prob}\ev f(\mathbb{Z}(\tau),\mathbb{Z}(\tau'))$ for every continuous and bounded function $f$.

%\pub{\addlines}

Without loss of generality, assume $Z_n^*(\tau)Z_n^*(\tau')$ is nonnegative; if not, split into positive and negative parts and argue separately. Now, for a given $\Delta > 0$, 
\begin{align*}
|\ev^* Z_n^*(\tau)Z_n^*(\tau') - \ev^* \mathbb{Z}(\tau)\mathbb{Z}(\tau')| &\leq \ev^* Z_n^*(\tau)Z_n^*(\tau') - \ev\min\{ Z_n^*(\tau)Z_n^*(\tau'),\Delta\}\\
&\quad + |\ev^* \min\{ Z_n^*(\tau)Z_n^*(\tau'),\Delta\} - \ev\min\{ \mathbb{Z}(\tau)\mathbb{Z}(\tau'),\Delta\}|\\
&\quad + |\ev\min\{ \mathbb{Z}(\tau)\mathbb{Z}(\tau'),\Delta\} - \ev \mathbb{Z}(\tau)\mathbb{Z}(\tau')|.
\end{align*}
Because $(z,z')\mapsto\min\{zz',\Delta\}$ is continuous and bounded for $zz' \geq 0$, the second term on the right converges to zero in probability as $n\to\infty$. The first term on the right does not exceed $\ev^* Z_n^*(\tau)Z_n^*(\tau') 1\{ Z_n^*(\tau)Z_n^*(\tau') > \Delta \}$. For $\varepsilon > 0$, the law of iterated expectations and the Cauchy-Schwarz inequality give
$$\ev \ev^* Z_n^*(\tau)Z_n^*(\tau') 1\{ Z_n^*(\tau)Z_n^*(\tau') > \Delta \} \leq \sup_{n\geq 1}\sqrt{\ev Z_n^*(\tau)^{2(1+\varepsilon)}\ev Z_n^*(\tau')^{2(1+\varepsilon)}} \Delta^{-\varepsilon}.$$
Note that the expectation on the right operates on both the data and the bootstrap distribution. As long as $2(1+\varepsilon) < q$, the right-hand side is finite by Lemma \ref{l:uniint} and converges to zero as $\Delta \to \infty$. A similar argument applies to the third term. The Markov inequality completes the proof.

\item[\eqref{th:bootse_uni}] Apply mean-value expansions to $\sqrt{n} M_n'(\beta(\tau), \tau) = 0$ to deduce\begin{align*}
\sqrt{n}\bigl(\hat{\beta}^*_n(\tau) - \hat{\beta}_{n}(\tau)\bigr) 
&= \sqrt{n}\bigl(\hat{\beta}^*_n(\tau) - \beta(\tau)\bigr) - \sqrt{n}\bigl(\hat{\beta}_n(\tau) - \beta(\tau)\bigr)\\
&= I_n^{-1}(\hat{\beta}^*_n(\tau),\tau)\sqrt{n} M_n'(\hat{\beta}^*_n(\tau), \tau) - I_n^{-1}(\hat{\beta}_n(\tau),\tau)\sqrt{n} M_n'(\hat{\beta}_n(\tau), \tau).
\end{align*}
After adding and subtracting with $J_n^{-1}(\tau)$, this becomes
\begin{align*}
&J_n^{-1}(\tau)\sqrt{n}\bigl( M_n'(\hat{\beta}^*_n(\tau), \tau) - M_n'(\hat{\beta}_n(\tau), \tau)\bigr) \\
&\quad+ \bigl(I_n^{-1}(\hat{\beta}^*_n(\tau),\tau)-J_n^{-1}(\tau)\bigr)\sqrt{n} M_n'(\hat{\beta}^*_n(\tau), \tau)\\ 
&\quad+\bigl(I_n^{-1}(\hat{\beta}_n(\tau),\tau)-J_n^{-1}(\tau)\bigr)\sqrt{n} M_n'(\hat{\beta}_n(\tau), \tau).
%&\quad+\Bigl(J_n^{-1}(\tau) - J^{-1}(\tau)\Bigr)\Bigl( M_n'(\hat{\beta}^*_n(\tau), \tau) - M_n'(\hat{\beta}_n(\tau), \tau)\Bigr)\\
\end{align*}

Denote the first term by $G_n(\tau)$ and the remaining two terms by $R_n(\tau)$. Then the distance between the bootstrap and population covariance functions can be written as
\begin{align*}
| \hat{V}_n^*(\tau,\tau') - V(\tau,\tau') |_{\Tau\times\Tau} = \bigl| \ev^* \bigl(G_n(\tau) + R_n(\tau)\bigr) \bigl(G_n(\tau') + R_n(\tau')\bigr)^\top - V(\tau,\tau') \bigr|_{\Tau\times\Tau}.
\end{align*}
Monotonicity of the expectation operator and sub-multiplicativity of the Frobenius norm yield $|\ev^* G_n(\tau)R_n(\tau')^\top|_{\Tau\times\Tau} \leq \ev^* |G_n(\tau)R_n(\tau')^\top|_{\Tau\times\Tau} \leq \ev^* |G_n(\tau)|_\Tau|R_n(\tau)|_\Tau$. This remains true when $G_n$ is replaced by $R_n$. Hence, the preceding display is at most
\begin{align*}
\bigl|\ev^*G_n(\tau)G_n(\tau')^\top - V(\tau,\tau') \bigr|_{\Tau\times\Tau} + 2\ev^* |G_n(\tau)|_\Tau|R_n(\tau)|_\Tau + \ev^* |R_n(\tau)|^2_\Tau.
\end{align*}

I will now argue that the second and third term converge to zero in probability. By the Lo\`eve $c_r$ inequality and Lemma \ref{l:jbound}, $\ev^*|R_n(\tau)|^2_\Tau$ is bounded above by $2$ times
\begin{align*}
\ev^*\bigl|\sqrt{n}\bigl(\hat{\beta}^*_n(\tau) - \beta(\tau)\bigr)\bigr|^2_{\Tau} |M_n'(\hat{\beta}^*_n(\tau), \tau)|^2_{\Tau} + \bigl|\sqrt{n}\bigl(\hat{\beta}_n(\tau) - \beta(\tau)\bigr)\bigr|^2_{\Tau} |M_n'(\hat{\beta}_n(\tau), \tau)|^2_{\Tau}.
\end{align*}
Theorem 3.3 of \citet{koenkerbassett1978} yields $$|\mean_n z_{\hat{\beta}^*_n(\tau),\tau} - \mathbb{W}_n(\tau)|\lesssim \max_{i\leq n, k\leq c_i} |X_{ik}| + \sqrt{n}|\mathbb{W}_n(\tau)|$$ uniformly in $\tau\in\Tau$ and therefore
\begin{align*}
|\sqrt{n}M_n'(\hat{\beta}^*_n(\tau), \tau)|^4_{\Tau} 
&= |\mathbb{G}_n z_{\hat{\beta}^*_n(\tau),\tau} -  \sqrt{n}\mean_n z_{\hat{\beta}^*_n(\tau),\tau} + \mathbb{W}_n(\tau)  - \mathbb{W}_n(\tau)\bigr|^4_{\Tau}\\
&\lesssim |\mathbb{G}_n z_{\beta,\tau}|^4_{\Beta\times\Tau} +  \Bigl|n^{-1/2}\max_{i\leq n, k\leq c_i} |X_{ik}| + |\mathbb{W}_n(\tau)|\Bigr|^4_{\Tau}  + |\mathbb{W}_n(\tau)|^4_{\Tau}\\
&\lesssim |\mathbb{G}_n z|^4_{\mathcal{Z}} +  n^{-2}\max_{i\leq n, k\leq c_i} |X_{ik}|^4 + |\mathbb{W}_n(\tau)|^4_{\Tau}.
\end{align*}
The first term on the far right of the display satisfies $\ev|\mathbb{G}_n z|^4_{\mathcal{Z}}< \infty$ uniformly in $n$ by Lemma \ref{l:maxineq}\eqref{l:maxineq_iv}, the second satisfies $n^{-2}\ev\max_{i\leq n, k\leq c_i} |X_{ik}|^4\leq n^{-1}\sup_{i,k}\ev|X_{ik}|^4$ by Pisier's inequality \citep[see, e.g.,][Problem 2.2.8, p.\ 105]{vandervaartwellner1996}, and the third satisfies $\ev|\mathbb{W}_n(\tau)|^4_{\Tau}< \infty$ uniformly in $n$ by Lemma \ref{l:maxineq}\eqref{l:maxineq_ii}. Conclude that $\ev|M_n'(\hat{\beta}^*_n(\tau), \tau)|^4_{\Tau} = O(n^{-2})$. Repeat the argument above with $\mathbb{W}_n(\tau)\equiv 0$ and $\hat{\beta}_n(\tau)$ instead of $\hat{\beta}^*_n(\tau)$ to also establish $\ev|M_n'(\hat{\beta}_n(\tau), \tau)|^4_{\Tau}=O(n^{-2})$. The Markov inequality, Lemma 1.2.6 of \citet[p.\ 11]{vandervaartwellner1996}, the Cauchy-Schwarz inequality, and Lemma \ref{l:uniint} now imply $\ev^* |R_n(\tau)|^2_\Tau\pto^{\prob}0$. 

Further, decompose $J_n(\tau)G_n(\tau)$ into
\begin{align}\label{eq:gdecomp}
\mathbb{G}_n z_{\hat{\beta}^*_n(\tau),\tau} - \mathbb{G}_n z_{\hat{\beta}_n(\tau),\tau} + \sqrt{n}\mean_n z_{\hat{\beta}_n(\tau),\tau} - \bigl(\sqrt{n}\mean_n z_{\hat{\beta}^*_n(\tau),\tau} - \mathbb{W}_n(\tau)\bigr) -\mathbb{W}_n(\tau).
\end{align}
The same arguments as before show $\ev^* |J_n(\tau)G_n(\tau)|^2_\Tau = O_{\prob}(1)$ and thus also $\ev^* |G_n(\tau)|^2_\Tau\leq |J_n^{-1}(\tau)|^2_\Tau \ev^* |J_n(\tau)G_n(\tau)|^2_\Tau = O_{\prob}(1)$ by Lemma \ref{l:junif}. Finally, apply the Cauchy-Schwarz inequality to conclude
\begin{align*}
| \hat{V}_n^*(\tau,\tau') - V(\tau,\tau') |_{\Tau\times\Tau} = \bigl|\ev^*G_n(\tau)G_n(\tau')^\top - V(\tau,\tau') \bigr|_{\Tau\times\Tau} + o_{\prob}(1).
\end{align*}

In view of \eqref{eq:gdecomp} and the arguments above, to show that the right-hand side of the preceding display is within $o_{\prob}(1)$ of $|J_n^{-1}(\tau)\ev^*\mathbb{W}_n(\tau)\mathbb{W}_n(\tau')^\top J_n^{-1}(\tau) - V(\tau,\tau')|_{\Tau\times\Tau}$, it suffices to establish that the $\ev^*$-expectation of $$ \xi_n := \bigl|\mathbb{G}_n z_{\hat{\beta}^*_n(\tau),\tau} - \mathbb{G}_n z_{\hat{\beta}_n(\tau),\tau}\bigr|^2_{\Tau} $$ converges to zero in probability. I will show below that this already follows if the display has a $\prob$-probability limit of zero. Indeed, for any $\eta > 0$, $\prob(\xi_n > \eta)$ does not exceed
\begin{align}\label{eq:equiapprox}
%&\prob \Bigl( \bigl|\mathbb{G}_n z_{\hat{\beta}^*_n(\tau),\tau}- \mathbb{G}_n z_{\hat{\beta}_n(\tau),\tau}\bigr|_{\Tau} > \eta \Bigr)\\
%&\qquad\leq 
\prob \biggl( \sup_{\varrho( z_{\beta,\tau}, z_{\beta',\tau'} )<\delta} |\ep_n z_{\beta,\tau}-\mathbb{G}_n z_{\beta',\tau'} |^2 > \eta \biggr) + \prob \Bigl( \bigl|\varrho\bigl(z_{\hat{\beta}^*_n(\tau),\tau}, z_{\hat{\beta}_n(\tau),\tau} \bigr)\bigr|_{\Tau} \geq \delta\Bigr).
\end{align}
\end{inparaenum}
The limit superior of the first term on the right is arbitrarily small by Lemma \ref{l:sequi}\eqref{l:sequi:asy}. To see that the second term is eventually small as well, use Lemma \ref{l:sequi}\eqref{l:sequi:totbound} to establish $$\ev\bigl|\varrho\bigl(z_{\hat{\beta}^*_n(\tau),\tau}, z_{\hat{\beta}_n(\tau),\tau} \bigr)\bigr|_{\Tau}^{2q/(q-2)}\lesssim \ev\bigl|\hat{\beta}^*_n(\tau)-\hat{\beta}_n(\tau)\bigr|_{\Tau} \leq n^{-1/2} \sup_{n\geq 1}\ev\bigl|\sqrt{n}\bigl(\hat{\beta}^*_n(\tau)-\hat{\beta}_n(\tau)\bigr)\bigr|_{\Tau}.$$
The expression on the right converges to zero by Lemma \ref{l:uniint}. The Markov inequality yields $|\hat{\beta}_n(\tau) - \beta(\tau)|_{\Tau}\pto^{\prob}0$ and the desired result. 

It now follows that $\xi_n\pto^{\prob} 0$ and therefore also $\xi_n\leadsto 0$ by Lemma 1.10.2 of \citet{vandervaartwellner1996}. Hence, $\ev \min\{\xi_n,\Delta\}\to 0$ due to boundedness and continuity of $z\mapsto \min\{z,\Delta\}$ with nonnegative $z$ and $\Delta$. %By the version of the Fubini theorem given in Lemma 1.2.6 of \citet[p.\ 11]{vandervaartwellner1996}, $\ev \ev^* \min\{\xi_n,\Delta\}\leq \ev \min\{\xi_n,\Delta\}$. 
Conclude from the Markov inequality that the second term on the right of
$$ \ev^*\xi_n = (\ev^*\xi_n - \ev^*\min\{\xi_n,\Delta\}) + \ev^*\min\{\xi_n,\Delta\}.$$
converges to zero in probability. The first term is bounded above by $\ev^*\xi_n1\{\xi_n > \Delta \}$. For a small enough $\varepsilon > 0$, the right-hand side of $\ev \xi_n1\{\xi_n > \Delta \} \leq \ev \xi_n^{1+\varepsilon}\Delta^{-\varepsilon}$ is finite by Lemma \ref{l:maxineq}\eqref{l:maxineq_iv} uniformly in $n$ and converges to zero as $\Delta \to \infty$. Deduce from the Markov inequality that the preceding display has a probability limit of zero. Combine the results above and Lemma \ref{l:junif} to obtain
\begin{align*}
| \hat{V}_n^*(\tau,\tau') - V(\tau,\tau') |_{\Tau\times\Tau} = \bigl|J^{-1}(\tau)\ev^*\mathbb{W}_n(\tau)\mathbb{W}_n(\tau')^\top J^{-1}(\tau') - V(\tau,\tau')\bigr|_{\Tau\times\Tau} + o_{\prob}(1)
\end{align*}

Because $V(\tau,\tau') = J^{-1}(\tau)\Sigma(\tau,\tau')J^{-1}(\tau')$, it now suffices to show that $\ev^*\mathbb{W}_n(\tau)\mathbb{W}_n(\tau')^\top$ and $\Sigma(\tau,\tau')$ are uniformly close in probablity as $n\to\infty$. From the definition of the bootstrap weights we have $\ev^*\mathbb{W}_n(\tau)\mathbb{W}_n(\tau')^\top = \mean_n g_{\hat{\beta}_n(\tau),\tau,\hat{\beta}_n(\tau'),\tau'}.$ Decompose the difference between $\mean_n g_{\hat{\beta}_n(\tau),\tau,\hat{\beta}_n(\tau'),\tau'}$ and $\Sigma_n(\tau,\tau') = \mean_n \ev g_{\beta(\tau),\tau,\beta(\tau'),\tau'}$ into
\begin{align*}
\mean_n (g_{\hat{\beta}_n(\tau),\tau,\hat{\beta}_n(\tau'),\tau'} - \ev g_{\hat{\beta}_n(\tau),\tau,\hat{\beta}_n(\tau'),\tau'} ) &+ \mean_n (g_{\beta(\tau),\tau,\beta(\tau'),\tau'} - \ev g_{\beta(\tau),\tau,\beta(\tau'),\tau'} )\\
& \qquad- \mean_n\ev ( g_{\hat{\beta}_n(\tau),\tau,\hat{\beta}_n(\tau'),\tau'} - g_{\beta(\tau),\tau,\beta(\tau'),\tau'} ).
\end{align*}
The first two terms converge to zero by the uniform law of large numbers in Lemma \ref{l:sigmaequi}. An argument similar to the one given in Lemma \ref{l:sequi}\eqref{l:sequi:totbound} yields
\begin{align*}
&\sup_{\tau,\tau'}\sup_{|\beta-\beta(\tau)|<\delta}\sup_{|\beta'-\beta(\tau')|< \delta} | \mean_n\ev(g_{\beta,\tau,\beta',\tau'} - g_{\beta(\tau),\tau,\beta(\tau'),\tau'}) |\\
&\qquad \lesssim  \frac{1}{n} \sum_{i=1}^n\sum_{k=1}^{c_i}\sum_{l=1}^{c_i} \sup_{\tau,\tau'}\ev\bigl(1\{ |Y_{ik} - X_{ik}^\top \beta(\tau)| < |X_{ik}| \delta \}\\ 
&\hspace{13em}+ 1\{ |Y_{il} - X_{il}^\top \beta(\tau')| < |X_{il}|\delta  \} \bigr) |X_{ik}^\top X_{il}| \lesssim \delta^{(q-2)/q}.
\end{align*}
%In view of that lemma, stochastic equicontinuity of $g_{\beta_1,\tau_1,\beta_2,\tau_2}\mapsto \mean_n g_{\beta_1,\tau_1,\beta_2,\tau_2}$ and 
%$$\bigl|\sigma\bigl(\hat{\beta}_n(\tau),\tau,\hat{\beta}_n(\tau'),\tau'\bigr)\bigl(\beta(\tau),\tau,\beta(\tau'),\tau'\bigr)\bigr|_{\Tau\times\Tau}^{2q/(q-4)} \lesssim \bigl|\hat{\beta}_n(\tau) - \beta(\tau)\bigr|_{\Tau}\overset{\prob}{\to}0$$ 
Because $\prob(|\hat{\beta}_n(\tau) - \beta(\tau)|_\Tau\geq \delta)\to 0$ for every $\delta > 0$, let first $n\to\infty$ and then $\delta \to 0$ to obtain $\mean_n g_{\hat{\beta}_n(\tau),\tau,\hat{\beta}_n(\tau'),\tau'} = \Sigma_n(\tau,\tau') + o_{\prob}(1)$ uniformly in $\Tau\times\Tau$.

The proof is complete if $|\Sigma_n(\tau,\tau') - \Sigma(\tau,\tau')|_{\Tau\times\Tau}$ converges to zero as $n\to\infty$. To this end, note that the metric space $(\Tau\times\Tau,|\cdot|)$ is totally bounded because $T$ is bounded. A straightforward computation using Assumption \ref{as:smooth} and \eqref{eq:betalip} gives $|\Sigma_n(\tau_1,\tau_2)-\Sigma_n(\tau_1',\tau_2')| \lesssim |(\tau_1-\tau_1', \tau_2-\tau_2')|^{1/2}$. Thus $\Sigma_n(\tau)$ is asymptotically uniformly equicontinuous. The pointwise convergence given in Assumption \ref{as:gstat} is therefore uniform by the Arzel\`a-Ascoli theorem \citep[Theorem 21.7, p.\ 335]{davidson1994}. 
\end{proof}

\begin{proof}[Proof of Theorem \ref{th:clt}] Because $\sqrt{n} M_n'(\beta(\tau), \tau) = 0$, a mean-value expansion gives
\begin{align*}
\sqrt{n} M_n'(\hat{\beta}_n(\tau), \tau) = I_n(\hat{\beta}_n(\tau),\tau)\sqrt{n}\bigl(\hat{\beta}_n(\tau) - \beta(\tau)\bigr).
\end{align*}
In view of Lemma \ref{l:jbound}, rearrange to write $\sqrt{n}(\hat{\beta}_n(\tau) - \beta(\tau))$ as 
\begin{align*}
I_n^{-1}(\hat{\beta}_n(\tau),\tau)\Bigl( \ep_n \bigl(z_{\hat{\beta}_n(\tau), \tau} - z_{\beta(\tau), \tau} \bigr) - \sqrt{n}\mean_n z_{\hat{\beta}_n(\tau), \tau}  + \ep_n z_{\beta(\tau), \tau} \Bigr).
\end{align*}
Computations similar to, but substantially simpler than the ones given in the proof of Theorem \ref{th:bootse}\eqref{th:bootse_uni} show that the preceding display is, uniformly in $\tau\in\Tau$, $o_{\prob}(1)$ away from
$J^{-1}(\tau) \ep_n z_{\beta(\tau), \tau}$. For every finite set of points $\tau_1, \tau_2,\hdots\in\Tau$, the convergence of the marginal vectors $$(\ep_n z_{\beta(\tau_1), \tau_1}^\top, \ep_n z_{\beta(\tau_2), \tau_2}^\top,\dots)^\top$$ follows from the Lindeberg-Feller central limit theorem and Assumptions \ref{as:data}-\ref{as:gstat}. 

Take $\varrho_\Tau(\tau,\tau') := \varrho(z_{\beta(\tau),\tau}, z_{\beta(\tau'),\tau'})$. To see that $(\Tau,\varrho_\Tau)$ is a totally bounded pseudometric space, note that for every $\varepsilon > 0$ the number of intervals of length $\varepsilon$ needed to cover $\Tau$ does not exceed $\diam(\Tau)/\varepsilon$. Hence, in view of \ref{l:sequi}\eqref{l:sequi:totbound} and \eqref{eq:betalip}, there exists some absolute constant $\Delta$ such that for every given radius $\delta$, we can pick $\varepsilon = \delta^{2q/(q-2)}/\Delta$. The number of $\varrho_\Tau$-balls of radius $\delta$ needed to cover $\Tau$ is then at most $\diam(\Tau)\Delta/\delta^{2q/(q-2)}$. 

The $\varrho_\Tau$-stochastic equicontinuity of $z_{\beta(\tau), \tau}\mapsto\ep_n z_{\beta(\tau), \tau}$ is implied by the $\varrho$-stochastic equicontinuity of $z_{\beta, \tau}\mapsto\ep_n z_{\beta, \tau}$. The theorem now follows from Theorem 10.2  of \citet[p. 51]{pollard1991b} and the continuous mapping theorem in metric spaces \citep[see, e.g.][Theorem 18.11, p. 259]{vandervaart1998}.
\end{proof}

\begin{proof}[Proof of Theorem \ref{th:bootclt}] 
Essentially the same computations as in the proof of Theorem \ref{th:bootse}\eqref{th:bootse_uni} (although without the requirement that $q>4$) yield
$$\sqrt{n}\bigl(\hat{\beta}^*_n(\tau) - \hat{\beta}_{n}(\tau)\bigr) = -J^{-1}(\tau) \mathbb{W}_n(\tau) + o_{\prob}(1).$$
Here the minus sign on the right is a consequence of the definition of the perturbed QR problem in \eqref{eq:bootmin} and will of course have no impact on the asymptotic distribution.

Theorem 3 of \citet{kosorok2003} implies that the $\varrho$-stochastic equicontinuity of $\ep_n z_{\beta,\tau}$ carries over to its multiplier process $\mathbb{W}_n(\beta,\tau)$ if the conditions of Theorem 10.6 of \citet[p. 53]{pollard1991b} hold. In fact, inspection of the proof shows that if only $\varrho$-stochastic equicontinuity is needed, then it suffices already to verify \citeauthor{pollard1991b}'s conditions (i) and (iii)-(v). (This observation was also made by \citet[p.\ 2284]{andrews1994} for \citeauthor{pollard1991b}'s Theorem 10.6.) Further, as pointed out by \citeauthor{andrews1994}, these conditions can be verified for any pseudometric that satisfies \citeauthor{pollard1991b}'s condition (v) because \citeauthor{pollard1991b}'s total boundedness result is not needed. The pseudometric $\varrho$ used here has property (v) by construction.

Following \citet[p.\ 2284]{andrews1994}, the ``manageability'' condition (i) is implied by the finite uniform entropy property that was established in the proof of Lemma \ref{l:sequi}\eqref{l:sequi:asy}. The remaining moment conditions (iii) and (iv) are implied by Assumption \ref{as:smooth}. By a standard stochastic equicontinuity argument as in \eqref{eq:equiapprox} we can therefore restate, uniformly in $\tau\in\Tau$, the preceding display as 
$$\sqrt{n}\bigl(\hat{\beta}^*_n(\tau) - \hat{\beta}_{n}(\tau)\bigr) = -J^{-1}(\tau)\mathbb{W}_n(\beta(\tau),\tau)  + o_{\prob}(1).$$

Finally, apply the multiplier central limit theorem of \citet[Theorem 3]{kosorok2003} to $\mathbb{W}_n(\beta(\tau),\tau)$. His conclusions again do not depend on the the specific choice of pseudometric because they are used for a total boundedness result that is not required here. (Recall from the proof of Theorem \ref{th:clt} that $(\Tau,\varrho_\Tau)$ is totally bounded.) The only condition that still needs to be verified is that $\{ z_{\beta(\tau),\tau} : \tau\in\Tau\}$ is almost measurable Suslin, which holds by Lemma \ref{l:suslin}. The continuous mapping theorem in metric spaces completes the proof.
\end{proof}

%\pub{\addlines}

\begin{proof}[Proof of Theorem \ref{c:bootse}]
Let $\Omega_\infty(\tau) = R(\tau)V(\tau,\tau)R(\tau)^\top$. Because $a^\top \Sigma(\tau,\tau) a$ is bounded below by zero and $J(\tau)$ and $R(\tau)$ have full rank, Assumption \ref{as:posdef} yields $\inf_{\tau\in\Tau} a^\top \Omega_\infty(\tau) a > 0 $ for all non-zero $a$. Conclude from a singular value decomposition that $\inf_{\tau\in\Tau} \lambda_{\min}(\Omega_\infty(\tau)) > 0$ and therefore also $\inf_{\tau\in\Tau} \lambda_{\min}(\smash{\Omega^{1/2}_\infty}(\tau)) > 0$.  Since eigenvalues are Lipschitz continuous on the space of symmetric matrices, apply Theorem \ref{th:bootse} to deduce $$ \Bigl| \inf_{\tau\in\Tau} \lambda_{\min}(\hat{\Omega}_n^*(\tau)) - \inf_{\tau\in\Tau} \lambda_{\min}(\Omega_\infty(\tau))\Bigr|\leq \sup_{\tau\in\Tau} \bigl| \lambda_{\min}(\hat{\Omega}_n^*(\tau)) - \lambda_{\min}(\Omega_\infty(\tau)) \bigr| \pto^{\prob}0.$$ Hence, $\inf_{\tau\in\Tau} \lambda_{\min}(\hat{\Omega}^{*1/2}_n(\tau)) > 0$ with probability approaching one as $n\to\infty$. On that event, we can write $|\hat{\Omega}_n^{*-1/2}(\tau)|^2_\Tau\leq d/\inf_{\tau\in\Tau} \lambda_{\min}(\hat{\Omega}^{*1/2}_n(\tau)) \pto^{\prob} d/\inf_{\tau\in\Tau} \lambda_{\min}({\Omega}^{1/2}_\infty(\tau))$. But then $|\hat{\Omega}_n^{*-1/2}(\tau)|_\Tau$ must be bounded in probability and the right-hand side of
\begin{align*}
|\hat{\Omega}_n^{*-1/2}(\tau) - \Omega_\infty^{-1/2}(\tau)|_\Tau \leq |\hat{\Omega}_n^{*-1/2}(\tau)|_\Tau |\Omega_\infty^{-1/2}(\tau)|_\Tau |\hat{\Omega}^{*1/2}_n(\tau) - \Omega^{1/2}_\infty(\tau)|_\Tau
\end{align*}
converges to zero in probability if $|\hat{\Omega}^{*1/2}_n(\tau) - \Omega^{1/2}_\infty(\tau)|_\Tau$ does. By Proposition 3.2 of \citet{vonhemmeando1980} \citep[see also][Theorem 6.2, p.\ 135]{higham2008} this difference satisfies $$|\hat{\Omega}^{*1/2}_n(\tau) - \Omega^{1/2}_\infty(\tau)|_\Tau \leq \frac{|\hat{\Omega}_n^*(\tau) - \Omega_\infty(\tau)|_\Tau }{\inf_{\tau\in\Tau} \lambda_{\min}(\hat{\Omega}^{*1/2}_n(\tau)) + \inf_{\tau\in\Tau} \lambda_{\min}(\Omega^{1/2}_\infty(\tau))}$$ and therefore has a probability limit of zero by Theorem \ref{th:bootse}.

Let $\|\cdot\|$ be any norm on $\mathbb{R}^d$ and abbreviate $\sup_{\tau\in\Tau}\|\cdot\|$ by $\|\cdot\|_\Tau$. Apply first Theorem \ref{th:bootclt} and the continuous mapping theorem unconditionally, then Proposition 10.7 of \citet[pp.\ 189-190]{kosorok2008}, Theorem \ref{th:bootclt} conditional on the data, and Lipschitz continuity to obtain
\begin{align*}%\label{eq:bootweak}
K^*_n(\hat{\Omega}_n^*,T) = \bigl\Vert\Omega_\infty^{-1/2}(\tau)R(\tau)\sqrt{n}\bigl(\hat{\beta}^*_n(\tau)-\hat{\beta}_n(\tau)\bigr)\bigr\Vert_\Tau + o_{\prob}(1) \leadsto \Vert \Omega_\infty^{-1/2}(\tau)R(\tau)\mathbb{Z}(\tau) \Vert_\Tau
\end{align*}
in probability conditional on the data. Here the bootstrap convergence occurs with respect to the bounded Lipschitz metric as in Theorem \ref{th:bootclt}, uniformly on $\mathrm{BL}_1(\mathbb{R})$.
Similarly, under the null hypothesis, rewrite $\mathit{K}_n(\hat{\Omega}_n^*,T)$ and then apply Theorem \ref{th:clt} and the continuous mapping theorem to establish
%$$\sup_{\tau\in\Tau} n\Bigl(R(\tau)\bigl(\hat{\beta}_n(\tau)-{\beta}(\tau)\bigr)\Bigr)^\top \hat{\Omega}_n^*^{-1}(\tau)\Bigl(R(\tau)\bigl(\hat{\beta}_n(\tau)-{\beta}(\tau)\bigr)\Bigr).$$ 
%By Theorem \ref{th:clt} and the continuous mapping theorem, this expression converges in distribution to 
\begin{align*}
K_n(\hat{\Omega}_n^*,T) = \bigl\Vert\hat{\Omega}_n^{*-1/2}(\tau)R(\tau)\sqrt{n}\bigl(\hat{\beta}_n(\tau)-{\beta}(\tau)\bigr)\bigr\Vert_\Tau \leadsto \Vert\Omega_\infty^{-1/2}(\tau)R(\tau)\mathbb{Z}(\tau)\Vert_\Tau =: K(\Omega_\infty,T).
\end{align*}
%$$\sup_{\tau\in\Tau} \bigl(R(\tau)\mathbb{Z}(\tau)\bigr)^\top \Omega_\infty^{-1}(\tau)R(\tau)\mathbb{Z}(\tau).$$

For $x\in\ell^\infty(\Tau)$, the map $x\mapsto \Vert x\Vert_\Tau$ constitutes a continuous, convex functional. Theorem 11.1 of \citet[p.\ 75]{davydovetal1998} then implies that the distribution function of $K(\Omega_\infty,T)$ is continuous and strictly increasing on $(q_0,\infty)$, where $q_0 = \inf\{ q : \prob(K(\Omega_\infty,T) \leq q) > 0 \}$. Because $\mathbb{Z}(\tau)$ is a Gaussian process with non-degenerate variance, we have $\prob(K(\Omega_\infty,T) \leq 0) \leq \prob(K(\Omega_\infty,\{\tau\}) \leq 0)=0$ for arbitrary $\tau\in\Tau$. Furthermore, because all norms on $\mathbb{R}^d$ are equivalent, there exists a $c>0$ such that
\begin{align*}
\prob(K(\Omega_\infty,T) < q) \geq \prob\bigl( |\pi_j \circ \Omega_\infty^{-1/2}(\tau)R(\tau)\mathbb{Z}(\tau) |_{\{1,\dots, d\}\times \Tau} < q/c\bigr).
\end{align*}
%(For the maximum norm, dividing by $d$ is unnecessary.) 
The supremum on the right is the supremum of the absolute value of mean-zero Gaussian variables. By \citet[Corollary]{lifshits1982} and \citet[Theorem 11.1]{davydovetal1998}, this supremum has a continuous, strictly increasing distribution function on $(0,\infty)$. The right-hand side of the preceding display is therefore strictly positive for every $q>0$. Hence, zero is in the support of $K(\Omega_\infty,T)$ and we must have $q_0 = 0$. Conclude that the distribution function of $K(\Omega_\infty,T)$ is in fact continuous and strictly increasing on $(0,\infty)$.

I will now argue that the quantiles of $K_n(\hat{\Omega}_n^*,T)$ and $K(\Omega_\infty,T)$ are eventually close in probability.
Define the maps 
\begin{align*}
h_n(q) &= \rho_{1-\alpha}\bigl(K^*_n(\hat{\Omega}_n^*,T) - q\bigr) - \rho_{1-\alpha}\bigl(K^*_n(\hat{\Omega}_n^*,T)\bigr)\quad\text{and}\\
h(q) &= \rho_{1-\alpha}\bigl(K(\Omega_\infty,T) - q\bigr) - \rho_{1-\alpha}\bigl(K(\Omega_\infty,T)\bigr).
\end{align*}
Despite the fact that $h_n$ may not be a measurable function of the bootstrap weights, $q\mapsto\ev^* h_n(q)$ and $q\mapsto\ev h(q)$ are clearly convex. Furthermore, $\ev h$ takes on its unique minimum at $q_{1-\alpha}(\Omega_\infty,\Tau) = \argmin_{q\in\mathbb{R}} \ev h(q)$ by the properties of the distribution function established above. In addition, both $h_n$ and $h$ are Lipschitz continuous. Proposition 10.7 of \citet{kosorok2008} and the definition of conditional weak convergence in probability then yield $|\ev^* h_n(q) - \ev h(q)|_Q\pto^{\prob}0$ for every compact set $Q\subset\mathbb{R}$. Because $q_{n,1-\alpha}(\hat{\Omega}_n^*,\Tau) = \argmin_{q\in\mathbb{R}} \ev^* h_n(q)$ and $\ev h$ has a unique minimum, Lemma 2 of \citet{hjortpollard1993} gives 
%
%Conclude from continuity of the distribution function and the properties of quantile functions \citep[Lemma 21.2, p.\ 305]{vandervaart1998} that 
$q_{n,1-\alpha}(\hat{\Omega}_n^*,\Tau)\pto^{\prob} q_{1-\alpha}(\Omega_\infty,\Tau)$. Thus, 
$$ \mathit{K}_n(\hat{\Omega}_n^*,T) - q_{n,1-\alpha}(\hat{\Omega}_n^*,\Tau) \leadsto \Vert \Omega_\infty^{-1/2}(\tau)R(\tau)\mathbb{Z}(\tau) \Vert_\Tau - q_{1-\alpha}(\Omega_\infty,\Tau),$$
where the distribution of the right-hand side is again continuous. The first result now follows because by the definition of weak convergence
$$ \prob\bigl( \mathit{K}_n(\hat{\Omega}_n^*,T) > q_{n,1-\alpha}(\hat{\Omega}_n^*,\Tau) \bigr) \to \prob\bigl( \| \Omega_\infty^{-1/2}(\tau)R(\tau)\mathbb{Z}(\tau) \|_\Tau > q_{1-\alpha}(\Omega_\infty,\Tau) \bigr) = \alpha.$$

%\pub{\addlines}

Under the alternative, use Theorem \ref{th:clt} and then the fact that $\Omega_\infty$ and $R$ have full rank to find an $\varepsilon > 0$ such that $$\mathit{K}_n(\hat{\Omega}_n^*,T)/n \pto^{\prob} \bigl\|\Omega_\infty^{-1/2}(\tau)R(\tau)\bigl(\beta(\tau) - r(\tau)\bigr)\bigr\|_\Tau =: K_\infty > \varepsilon.$$
Hence, the first term on the right-hand side of 
\begin{align*}
&\prob\bigl(\mathit{K}_n(\hat{\Omega}_n^*,\Tau) \leq q_{n,1-\alpha}(\hat{\Omega}_n^*,\Tau)\bigr)\\ 
&\qquad\leq \prob\bigl(|\mathit{K}_n(\hat{\Omega}_n^*,\Tau)/n - K_\infty| \geq \varepsilon\bigr) + \prob\bigl(q_{n,1-\alpha}(\hat{\Omega}_n^*,\Tau) > n(K_\infty - \varepsilon)\bigr)
\end{align*}
converges to zero. To see that the second term converges to zero as well, note that $q_{n,1-\alpha}(\hat{\Omega}_n^*,\Tau)$ is bounded in probability by the arguments given in the previous paragraph. Hence, $q_{n,1-\alpha}(\hat{\Omega}_n^*,\Tau)/n \pto^{\prob}0$ and the desired conclusion follows because $K_\infty - \varepsilon > 0$.
%it is, by the properties of quantile functions \citep[Lemma 21.1(i), p.\ 304]{vandervaart1998}, equal to
%$$\prob\Bigl(\prob^*\bigl(K_n^*(\hat{\Omega}_n^*,\Tau)/n >  c_\infty - \varepsilon\bigr) > \alpha\Bigr) \leq \alpha^{-1} \prob\bigl(K_n^*(\hat{\Omega}_n^*,\Tau)/n >  c_\infty - \varepsilon\bigr).$$  Here the inequality follows from the Markov inequality and the Fubini theorem given in Lemma 1.2.6 of \citet[p.\ 11]{vandervaartwellner1996}.
%Because $c_\infty > \varepsilon$, it is therefore enough to show that $K_n^*(\hat{\Omega}_n^*,\Tau)/n = o_{\prob}(1)$. Since $|\hat{\beta}^*_n(\tau)-\hat{\beta}_n(\tau)|_\Tau = o_{\prob}(1)$ by Theorem \ref{th:bootclt}, submultiplicativity of the Frobenius norm immediately implies the desired result. 
All of the above remains valid with $I_d$ in place of $\hat{\Omega}_n^*$ and $\Omega_\infty$.
\end{proof}

\begin{proof}[Proof of Corollary \ref{c:bootband}]
Denote the diagonal matrix of a square matrix $A$ by $\diag A$. For every given $\Delta$, we can find a matrix $R$ such that 
$$ K_n^*(\hat{V}_n^*,\Tau,\Delta) = \sup_{\tau\in\Tau} \bigl|\diag(\hat{V}_n^*(\tau, \tau))^{-1/2}R\sqrt{n}\bigl(\hat{\beta}^*_n(\tau)-\hat{\beta}_n(\tau)\bigr)\bigr|_{\max}.$$
This statistic is of the same form as the statistic $K^*_n(\Omega,\Tau)$ used in Theorem \ref{th:bootse} with $\Omega = \diag \hat{V}_n^*$. If we also define
$$ K_n(\hat{V}_n^*,\Tau,\Delta) = \sup_{\tau\in\Tau} \bigl|\diag(\hat{V}_n^*(\tau, \tau))^{-1/2}R\sqrt{n}\bigl(\hat{\beta}_n(\tau)-\beta(\tau)\bigr)\bigr|_{\max}, $$
then we can view $K_n(\hat{V}_n^*,\Tau,\Delta)$ as $K_n(\diag\hat{V}_n^*,\Tau)$ under the null hypothesis and the event inside the displayed probability in the corollary is equivalent to $\{K_n(\hat{V}_n^*,\Tau,\Delta)\leq q_{n,1-\alpha}(\hat{V}_n^*,\Tau,\Delta)\}$. Hence, $\prob(K_n(\hat{V}_n^*,\Tau,\Delta)\leq q_{n,1-\alpha}(\hat{V}_n^*,\Tau,\Delta))\to 1-\alpha$ follows from Theorem \ref{c:bootse} if its proof also applies to the weight matrix $\diag \hat{V}_n^*$. Because $\inf_{\tau\in\Tau} a^\top V(\tau, \tau) a > 0 $ for all non-zero $a$ implies $\inf_{\tau\in\Tau} a^\top \diag V(\tau, \tau) a > 0 $ for all non-zero $a$, I only have to show that $|\diag \hat{V}_n^*(\tau,\tau) - \diag V(\tau, \tau)|_\Tau \pto^{\prob} 0$. But this is implied by $|\hat{V}_n^*(\tau, \tau) - V(\tau, \tau)|_\Tau \pto^{\prob} 0$ because for any square matrices $A$, $B$ of identical dimension, $|\diag A - \diag B| = |\diag(A-B)|\leq |A-B|$.
\end{proof}

\begin{proof}[Proof of Corollary \ref{c:bootgrid}]
Inspection of the proof of Theorem \ref{c:bootse} reveals that the desired conclusion holds if (i) $K_n(\hat{\Omega}_n^*,\Tau_n)$ converges in distribution to $K(\Omega_\infty,\Tau)$ and (ii) $K^*_n(\hat{\Omega}_n^*,T_n)$ and $K^*_n(\hat{\Omega}_n^*,T)$ are $o_{\prob}(1)$ away from one another. For (i), note that $\Tau_n$ is a subset of the pseudometric space $(\Tau,\varrho_\Tau)$. %For an arbitrary $\tau\in\Tau$, construct a sequence $\tau_{j_n}$ as follows: for every $n$, pick $\tau_{j_n}\in\Tau_n$ such that $\tau_{j_n}\leq \tau < \tau_{j_n+1}$. But then $0\leq \tau-\tau_{j_n} < \gamma_n$. 
Because $\tau_n\to\tau$, conclude from Lemma \ref{l:sequi}\eqref{l:sequi:totbound} that $\varrho_\Tau(\tau_n,\tau) \to 0$. %Hence, every $\tau\in\Tau$ is the limit of a sequence $\tau_{j_n}\in\Tau_n$. 
It now follows from Exercise 7.5.5 of \citet[p.\ 125, with his $\Tau_0$ equal to $\Tau$]{kosorok2008} and the extended continuous mapping theorem that $K_n(\hat{\Omega}_n^*,\Tau_n)\leadsto K(\Omega_\infty,\Tau)$.

For (ii), we obtain $K^*_n(\hat{\Omega}_n^*,T_n)\leadsto K(\Omega_\infty,\Tau)$ unconditionally using the same argument. Therefore, again by the extended continuous mapping theorem, we in fact have the joint convergence $$\bigl(K^*_n(\hat{\Omega}_n^*,T_n),K^*_n(\hat{\Omega}_n^*,T)\bigr) \leadsto \bigl(K(\Omega_\infty,\Tau), K(\Omega_\infty,\Tau)\bigr)$$ unconditionally, which immediately gives $ K^*_n(\hat{\Omega}_n^*,T_n) - K^*_n(\hat{\Omega}_n^*,T) \pto^{\prob} 0$.
\end{proof}

\phantomsection
\addcontentsline{toc}{section}{References}
\pub{\bibliography{qspec}}

\begin{thebibliography}{}

\bibitem[\protect\citeauthoryear{Alexander}{Alexander}{1985}]{alexander1985}
Alexander, K.~M. (1985).
\newblock Rates of growth for weighted empirical processes.
\newblock In {\em Proceedings of the {B}erkeley Conference in Honor of {J}erzy
  {N}eyman and {J}ack {K}iefer}, Volume~2, pp.\  475--493. University of
  California Press, Berkeley.

\bibitem[\protect\citeauthoryear{Andrews}{Andrews}{1994}]{andrews1994}
Andrews, D. W.~K. (1994).
\newblock Empirical process methods in econometrics.
\newblock In R.~F. Engle and D.~L. McFadden (Eds.), {\em Handbook of
  Econometrics}, Volume~IV, Chapter~37, pp.\  2248--2294. Elsevier.

\bibitem[\protect\citeauthoryear{Andrews and Buchinsky}{Andrews and
  Buchinsky}{2000}]{andrewsbuchinsky2000}
Andrews, D. W.~K. and M.~Buchinsky (2000).
\newblock A three-step method for choosing the number of bootstrap repetitions.
\newblock {\em Econometrica\/}~{\em 68}, 21--51.

\bibitem[\protect\citeauthoryear{Angrist, Chernozhukov, and
  Fern{\'a}ndez-Val}{Angrist et~al.}{2006}]{angristetal2006}
Angrist, J., V.~Chernozhukov, and I.~Fern{\'a}ndez-Val (2006).
\newblock Quantile regression under misspecification, with an application to
  the {U.S.} wage structure.
\newblock {\em Econometrica\/}~{\em 74}, 539--563.

\bibitem[\protect\citeauthoryear{Belloni, Chernozhukov, and
  Fern{\'a}ndez-Val}{Belloni et~al.}{2011}]{bellonietal2011}
Belloni, A., V.~Chernozhukov, and I.~Fern{\'a}ndez-Val (2011).
\newblock Conditional quantile processes based on series or many regressors.
\newblock Preprint, \texttt{arXiv:1105.6154}.

\bibitem[\protect\citeauthoryear{Bertrand, Duflo, and Mullainathan}{Bertrand
  et~al.}{2004}]{bertrandetal2004}
Bertrand, M., E.~Duflo, and S.~Mullainathan (2004).
\newblock How much should we trust differences-in-differences estimates?
\newblock {\em Quarterly Journal of Economics\/}~{\em 119}, 249--275.

\bibitem[\protect\citeauthoryear{Bester, Conley, and Hansen}{Bester
  et~al.}{2011}]{besteretal2011}
Bester, C.~A., T.~G. Conley, and C.~B. Hansen (2011).
\newblock Inference with dependent data using cluster covariance estimators.
\newblock {\em Journal of Econometrics\/}~{\em 165}, 137--151.

\bibitem[\protect\citeauthoryear{Cameron, Gelbach, and Miller}{Cameron
  et~al.}{2008}]{cameronetal2008}
Cameron, A.~C., J.~B. Gelbach, and D.~L. Miller (2008).
\newblock Bootstrap-based improvements for inference with clustered errors.
\newblock {\em Review of Economics and Statistics\/}~{\em 90}, 414--427.

\bibitem[\protect\citeauthoryear{Cameron and Miller}{Cameron and
  Miller}{2015}]{cameronmiller2014}
Cameron, A.~C. and D.~L. Miller (2015).
\newblock A practitoner's guide to cluster robust inference.
\newblock {\em Journal of Human Resources\/}, forthcoming.

\bibitem[\protect\citeauthoryear{Chen, Wei, and Parzen}{Chen
  et~al.}{2003}]{chenetal2003}
Chen, L., L.-J. Wei, and M.~I. Parzen (2003).
\newblock Quantile regression for correlated observations.
\newblock In {\em Proceedings of the Second Seattle Symposium in Biostatistics:
  Analysis of Correlated Data}. Lecture Notes in Statistics. Springer, New
  York.

\bibitem[\protect\citeauthoryear{Chetverikov, Larsen, and Palmer}{Chetverikov
  et~al.}{2013}]{chetverikovetal2013}
Chetverikov, D., B.~Larsen, and C.~Palmer (2013).
\newblock {IV} quantile regression for group-level treatments.
\newblock Unpublished manuscript.

\bibitem[\protect\citeauthoryear{Davidson}{Davidson}{1994}]{davidson1994}
Davidson, J. (1994).
\newblock {\em Stochastic Limit Theory}.
\newblock Oxford University Press, Oxford.

\bibitem[\protect\citeauthoryear{Davidson}{Davidson}{2012}]{davidson2012}
Davidson, R. (2012).
\newblock Statistical inference in the presence of heavy tails.
\newblock {\em Econometrics Journal\/}~{\em 15}, C31--C53.

\bibitem[\protect\citeauthoryear{Davidson and MacKinnon}{Davidson and
  MacKinnon}{2000}]{davidsonmackinnon2000}
Davidson, R. and J.~G. MacKinnon (2000).
\newblock Bootstrap tests: how many bootstraps?
\newblock {\em Econometric Reviews\/}~{\em 19}, 55--68.

\bibitem[\protect\citeauthoryear{Davydov, Lifshits, and Smorodina}{Davydov
  et~al.}{1998}]{davydovetal1998}
Davydov, Y.~A., M.~A. Lifshits, and N.~V. Smorodina (1998).
\newblock {\em Local Properties of Distributions of Stochastic Functionals},
  Volume 173 of {\em Translations of Mathematical Monographs}.
\newblock American Mathematical Society, Providence, RI.

\bibitem[\protect\citeauthoryear{Donald and Lang}{Donald and
  Lang}{2007}]{donaldlang2007}
Donald, S.~G. and K.~Lang (2007).
\newblock Inference with difference-in-differences and other panel data.
\newblock {\em Review of Economics and Statistics\/}~{\em 89}, 221--233.

\bibitem[\protect\citeauthoryear{Feng, He, and Hu}{Feng
  et~al.}{2011}]{fengetal2010}
Feng, X., X.~He, and J.~Hu (2011).
\newblock Wild bootstrap for quantile regression.
\newblock {\em Biometrika\/}~{\em 94}, 995--999.

\bibitem[\protect\citeauthoryear{Ghosh, Parr, Singh, and Babu}{Ghosh
  et~al.}{1984}]{ghoshetal1984}
Ghosh, M., W.~C. Parr, K.~Singh, and G.~J. Babu (1984).
\newblock A note on bootstrapping the sample median.
\newblock {\em Annals of Statistics\/}~{\em 12}, 1130--1135.

\bibitem[\protect\citeauthoryear{Gon\c{c}alves and White}{Gon\c{c}alves and
  White}{2005}]{goncalveswhite2005}
Gon\c{c}alves, S. and H.~White (2005).
\newblock Bootstrap standard error estimates for linear regression.
\newblock {\em Journal of the American Statistical Association\/}~{\em 100},
  970--979.

\bibitem[\protect\citeauthoryear{Graham}{Graham}{2008}]{graham2008}
Graham, B.~S. (2008).
\newblock Identifying social interactions through conditional variance
  restrictions.
\newblock {\em Econometrica\/}~{\em 76}, 643--660.

\bibitem[\protect\citeauthoryear{Gutenbrunner, Jur{\^e}ckov{\'a}, Koenker, and
  Portnoy}{Gutenbrunner et~al.}{1993}]{gutenbrunneretal1993}
Gutenbrunner, C., J.~Jur{\^e}ckov{\'a}, R.~Koenker, and S.~Portnoy (1993).
\newblock Tests of linear hypotheses based on regression rank scores.
\newblock {\em Journal of Nonparametric Statistics\/}~{\em 2}, 307--333.

\bibitem[\protect\citeauthoryear{Hahn}{Hahn}{1995}]{hahn1995}
Hahn, J. (1995).
\newblock Bootstrapping quantile regression estimators.
\newblock {\em Econometric Theory\/}~{\em 11}, 105--121.

\bibitem[\protect\citeauthoryear{Hall}{Hall}{1992}]{hall1992}
Hall, P. (1992).
\newblock {\em The Bootstrap and Edgeworth Expansion}.
\newblock Springer, New York.

\bibitem[\protect\citeauthoryear{Hendricks and Koenker}{Hendricks and
  Koenker}{1992}]{hendrickskoenker1992}
Hendricks, W. and R.~Koenker (1992).
\newblock Hierarchical spline models for conditional quantiles and the demand
  for electricity.
\newblock {\em Journal of the American Statistical Association\/}~{\em 87},
  58--68.

\bibitem[\protect\citeauthoryear{Higham}{Higham}{2008}]{higham2008}
Higham, N.~J. (2008).
\newblock {\em Functions of Matrices: Theory and Computation}.
\newblock Society for Industrial \& Applied Mathematics, Philadelphia, PA.

\bibitem[\protect\citeauthoryear{Hjort and Pollard}{Hjort and
  Pollard}{1993}]{hjortpollard1993}
Hjort, N. and D.~Pollard (1993).
\newblock Asymptotics for minimisers of convex processes.
\newblock Statistical Research Report, University of Oslo.

\bibitem[\protect\citeauthoryear{Karlin, Cameron, and Williams}{Karlin
  et~al.}{1981}]{karlinetal1981}
Karlin, S., E.~C. Cameron, and P.~T. Williams (1981).
\newblock Sibling and parent-offspring correlation estimation with variable
  family size.
\newblock {\em Proceedings of the National Academy of Sciences\/}~{\em 78},
  2664--2668.

\bibitem[\protect\citeauthoryear{Kato}{Kato}{2011}]{kato2011}
Kato, K. (2011).
\newblock A note on moment convergence of bootstrap {M}-estimators.
\newblock {\em Statistics \& Decisions\/}~{\em 28}, 51--61.

\bibitem[\protect\citeauthoryear{Kloek}{Kloek}{1981}]{kloek1981}
Kloek, T. (1981).
\newblock {OLS} estimation in a model where a microvariable is explained by
  aggregates and contemporaneous disturbances are equicorrelated.
\newblock {\em Econometrica\/}~{\em 49}, 205--207.

\bibitem[\protect\citeauthoryear{Knight}{Knight}{1998}]{knight1998}
Knight, K. (1998).
\newblock Limiting distributions of {\emph{l}\textsubscript{1}} regression
  estimators under general conditions.
\newblock {\em Annals of Statistics\/}~{\em 26}, 756--770.

\bibitem[\protect\citeauthoryear{Koenker}{Koenker}{2004}]{koenker2004}
Koenker, R. (2004).
\newblock Quantile regression for longitudinal data.
\newblock {\em Journal of Multivariate Analysis\/}~{\em 91}, 74--89.

\bibitem[\protect\citeauthoryear{Koenker}{Koenker}{2005}]{koenker2005}
Koenker, R. (2005).
\newblock {\em Quantile Regression}.
\newblock Cambridge University Press, New York.

\bibitem[\protect\citeauthoryear{Koenker}{Koenker}{2013}]{koenker2013}
Koenker, R. (2013).
\newblock {\em quantreg: Quantile Regression}.
\newblock R package version 5.05.

\bibitem[\protect\citeauthoryear{Koenker and Bassett}{Koenker and
  Bassett}{1978}]{koenkerbassett1978}
Koenker, R. and G.~Bassett (1978).
\newblock Regression quantiles.
\newblock {\em Econometrica\/}~{\em 46}, 33--50.

\bibitem[\protect\citeauthoryear{Koenker and Machado}{Koenker and
  Machado}{1999}]{koenkermachado1999}
Koenker, R. and J.~A. Machado (1999).
\newblock Goodness of fit and related inference processes for quantile
  regression.
\newblock {\em Journal of the American Statistical Association\/}~{\em 94},
  1296--1310.

\bibitem[\protect\citeauthoryear{Kosorok}{Kosorok}{2003}]{kosorok2003}
Kosorok, M.~R. (2003).
\newblock Bootstraps of sums of independent but not identically distributed
  stochastic processes.
\newblock {\em Journal of Multivariate Analysis\/}~{\em 84}, 299--318.

\bibitem[\protect\citeauthoryear{Kosorok}{Kosorok}{2008}]{kosorok2008}
Kosorok, M.~R. (2008).
\newblock {\em Introduction to Empirical Processes and Semiparametric
  Inference}.
\newblock Springer Series in Statistics. Springer, New York.

\bibitem[\protect\citeauthoryear{Krueger}{Krueger}{1999}]{krueger1999}
Krueger, A.~B. (1999).
\newblock Experimental estimates of education production functions.
\newblock {\em Quarterly Journal of Economics\/}~{\em 114}, 497--532.

\bibitem[\protect\citeauthoryear{Liang and Zeger}{Liang and
  Zeger}{1986}]{liangzeger1986}
Liang, K.-Y. and S.~L. Zeger (1986).
\newblock Longitudinal data analysis using generalized linear models.
\newblock {\em Biometrika\/}~{\em 73}, 13--22.

\bibitem[\protect\citeauthoryear{Lifshits}{Lifshits}{1982}]{lifshits1982}
Lifshits, M.~A. (1982).
\newblock On the absolute continuity of distributions of functionals of random
  processes.
\newblock {\em Theory of Probability and Its Applications\/}~{\em 27},
  600--607.

\bibitem[\protect\citeauthoryear{Liu}{Liu}{1988}]{liu1988}
Liu, R.~Y. (1988).
\newblock Bootstrap procedures under some non-{I.I.D.}~models.
\newblock {\em Annals of Statistics\/}~{\em 16}, 1696--1708.

\bibitem[\protect\citeauthoryear{MacKinnon and Webb}{MacKinnon and
  Webb}{2015}]{mackinnonwebb2014}
MacKinnon, J.~G. and M.~D. Webb (2015).
\newblock Wild bootstrap inference for wildly different cluster sizes.
\newblock Department of Economics, Queen's University Working Paper 2-2015.

\bibitem[\protect\citeauthoryear{Mammen}{Mammen}{1992}]{mammen1992}
Mammen, E. (1992).
\newblock {\em When Does Bootstrap Work? Asymptotic Results and Simulations},
  Volume~77 of {\em Lecture Notes in Statistics}.
\newblock Springer, New York.

\bibitem[\protect\citeauthoryear{Moulton}{Moulton}{1990}]{moulton1990}
Moulton, B.~R. (1990).
\newblock An illustration of a pitfall in estimating the effects of aggregate
  variables on micro units.
\newblock {\em Review of Economics \& Statistics\/}~{\em 72}, 334--338.

\bibitem[\protect\citeauthoryear{Parente and {Santos Silva}}{Parente and
  {Santos Silva}}{2015}]{parentesantossilva2013}
Parente, P.~M. and J.~M. {Santos Silva} (2015).
\newblock Quantile regression with clustered data.
\newblock {\em Journal of Econometric Methods\/}, forthcoming.

\bibitem[\protect\citeauthoryear{Parzen, Wei, and Ying}{Parzen
  et~al.}{1994}]{parzenetal1994}
Parzen, M.~I., L.-J. Wei, and Z.~Ying (1994).
\newblock A resampling method based on pivotal estimating functions.
\newblock {\em Biometrika\/}~{\em 81}, 341--350.

\bibitem[\protect\citeauthoryear{Pollard}{Pollard}{1982}]{pollard1982}
Pollard, D. (1982).
\newblock A central limit theorem for empirical processes.
\newblock {\em Journal of the Australian Mathematical Mathematical Society
  (Series A)\/}~{\em 33}, 235--248.

\bibitem[\protect\citeauthoryear{Pollard}{Pollard}{1990}]{pollard1991b}
Pollard, D. (1990).
\newblock {\em Empirical Processes: Theory and Applications}, Volume~2 of {\em
  NSF-CBMS Regional Conference Series in Probability and Statistics}.
\newblock Institute for Mathematical Statistics.

\bibitem[\protect\citeauthoryear{Portnoy}{Portnoy}{2014}]{portnoy2014}
Portnoy, S. (2014).
\newblock The jackknife's edge: Inference for censored regression quantiles.
\newblock {\em Computational Statistics and Data Analysis\/}~{\em 72},
  273--281.

\bibitem[\protect\citeauthoryear{Powell}{Powell}{1986}]{powell1986}
Powell, J.~L. (1986).
\newblock Censored regression quantiles.
\newblock {\em Journal of Econometrics\/}~{\em 32}, 143--155.

\bibitem[\protect\citeauthoryear{van~der Vaart}{van~der
  Vaart}{1998}]{vandervaart1998}
van~der Vaart, A.~W. (1998).
\newblock {\em Asymptotic Statistics}.
\newblock Cambridge University Press.

\bibitem[\protect\citeauthoryear{van~der Vaart and Wellner}{van~der Vaart and
  Wellner}{1996}]{vandervaartwellner1996}
van~der Vaart, A.~W. and J.~A. Wellner (1996).
\newblock {\em Weak Convergence and Empirical Processes: With Applications to
  Statistics}.
\newblock Springer Series in Statistics. Springer, New York.

\bibitem[\protect\citeauthoryear{van Hemmen and Ando}{van Hemmen and
  Ando}{1980}]{vonhemmeando1980}
van Hemmen, J.~L. and T.~Ando (1980).
\newblock An inequality for trace ideals.
\newblock {\em Communications in Mathematical Physics\/}~{\em 76}, 143--148.

\bibitem[\protect\citeauthoryear{Wang}{Wang}{2009}]{wang2009}
Wang, H. (2009).
\newblock Inference on quantile regression for heteroscedastic mixed models.
\newblock {\em Statistica Sinica\/}~{\em 19}, 1247--1261.

\bibitem[\protect\citeauthoryear{Wang and He}{Wang and He}{2007}]{wanghe2007}
Wang, H. and X.~He (2007).
\newblock Detecting differential expressions in genechip microarray studies: a
  quantile approach.
\newblock {\em Journal of American Statistical Association\/}~{\em 102},
  104--112.

\bibitem[\protect\citeauthoryear{Webb}{Webb}{2014}]{webb2013}
Webb, M.~D. (2014).
\newblock Reworking wild bootstrap based inference for clustered errors.
\newblock Department of Economics, University of Calgary Working Paper.

\bibitem[\protect\citeauthoryear{Word, Johnston, Bain, Fulton, Achilles, Lintz,
  Folger, and Breda}{Word et~al.}{1990}]{wordetal1990}
Word, E., J.~Johnston, H.~P. Bain, B.~D. Fulton, C.~M. Achilles, M.~N. Lintz,
  J.~Folger, and C.~Breda (1990).
\newblock The state of {T}ennessee's student/teacher achievement ratio ({STAR})
  project: Technical report 1985-1990.
\newblock Report, Tennessee State University, Center of Excellence for Research
  in Basic Skills.

\bibitem[\protect\citeauthoryear{Wu}{Wu}{1986}]{wu1986}
Wu, C. F.~J. (1986).
\newblock Jackknife, bootstrap and other resampling methods in regression
  analysis.
\newblock {\em Annals of Statistics\/}~{\em 14}, 1261--1295.

\bibitem[\protect\citeauthoryear{Yoon and Galvao}{Yoon and
  Galvao}{2013}]{yoongalvao2013}
Yoon, J. and A.~Galvao (2013).
\newblock Robust inference for panel quantile regression models with individual
  effects and serial correlation.
\newblock Unpublished manuscript.

\end{thebibliography}
\pre{
\putbib[qspec]
\end{bibunit}
}

\end{document}